\DeclareMathOperator{\Lit}{Lit}
\newcommand{\ibet}{\eta}
\newcommand{\nop}[3]{\|#1\|_{{#2}\to{#3}}}
\newcommand{\one}{\mathbf{1}}
\newcommand{\CC}{\mathbf{C}}
\newcommand{\NN}{\mathbf{N}}
\newcommand{\RR}{\mathbf{R}}
\newcommand{\se}{\subseteq}
\newcommand{\diam}	{\operatorname{diam}} 
\newcommand{\girth}	{\operatorname{girth}} 
\newcommand{\rank}	{\operatorname{rank}} 
\newcommand{\Cay}	{\operatorname{Cay}} 
\theoremstyle{plain}
\newtheorem{theorem}{Theorem}[section]
\newtheorem{defin}[theorem]{Definition}
\newtheorem{lemma}[theorem]{Lemma}
\newtheorem{prop}[theorem]{Proposition}
\newtheorem{cor}[theorem]{Corollary}
\newtheorem{question}[theorem]{Question}
\newtheorem*{questionno}{Question}
\theoremstyle{remark}
\newtheorem{note}[theorem]{Remark}
\newtheorem{notes}[theorem]{Remarks}
\title[Cheeger constants and unitarisability of groups]{Asymptotics of Cheeger constants and \\ unitarisability of groups}
\author[M. Gerasimova]{Maria Gerasimova}
\address{Maria Gerasimova, TU Dresden, Germany}
\email{maria.gerasimova@tu-dresden.de}
\author[D. Gruber]{Dominik Gruber}
\address{Dominik Gruber, ETH Zurich, Switzerland}
\email{dominik.gruber@math.ethz.ch}
\author[N. Monod]{Nicolas Monod}
\address{Nicolas Monod, EPFL, Switzerland}
\email{nicolas.monod@epfl.ch}
\author[A. Thom]{Andreas Thom}
\address{Andreas Thom, TU Dresden, Germany}
\email{andreas.thom@tu-dresden.de}
\begin{document}
\begin{abstract}
Given a group $\Gamma$, we establish a connection between the unitarisability of its uniformly bounded representations and the asymptotic behaviour of the isoperimetric constants of Cayley graphs of $\Gamma$ for increasingly large generating sets.

The connection hinges on an analytic invariant $\Lit(\Gamma)\in [0, \infty]$ which we call the \emph{Littlewood exponent}. Finiteness, amenability, unitarisability and the existence of free subgroups are related respectively to the thresholds $0, 1, 2$ and $\infty$ for $\Lit(\Gamma)$. Using graphical small cancellation theory, we prove that there exist groups $\Gamma$ for which $1<\Lit(\Gamma)<\infty$. Further applications, examples and problems are discussed.
\end{abstract}
\date{January 2018}
\maketitle

\tableofcontents

\section{Introduction}
A linear representation $\pi$ of a group $\Gamma$ on a Hilbert space is called \textbf{unitarisable} if $\pi$ is conjugated to a unitary representation by a bounded operator. This implies that $\pi$ is \textbf{uniformly bounded}, that is, $\sup_{g\in \Gamma} \|\pi(g)\|$ is finite. Extending a classical result of Sz.-Nagy~\cite{Sz-Nagy} for $\Gamma=\mathbf Z$, it was shown by several authors~\cites{Day, Dixmier, Nakamura} in 1950 that the converse holds when $\Gamma$ is amenable. That is, amenable groups are \textbf{unitarisable}. It has been open ever since whether this characterises the unitarisability of a group:

\begin{questionno}[Dixmier~\cite{Dixmier}]
Are all unitarisable groups amenable?
\end{questionno}

The first example of a non-unitarisable group was found by Ehrenpreis--Mautner~\cite{Ehrenpreis}, who showed in 1951 that ${\rm SL}_2(\mathbf{R})$ is not unitarisable; it can be deduced that non-abelian free groups are not unitarisable either. In the 1980s, simple and explicit constructions of non-unitarisable representations of free groups were provided, see e.g.~\cites{MZ, PS, MPSZ}. Since unitarisability passes to subgroups, Dixmier's question thus concerns non-amenable groups without free subgroups. The fact that such groups can indeed be non-unitarisable has been confirmed more recently~\cites{EpsteinMonod, Osin, MonodOzawa}.

The starting point of the present article is the connection established by Bo\.zejko--Fendler~\cite{BozejkoFendler} and Wysocza\'nski~\cite{Wysoczanski} between unitarisability, amenability and the space $T_1(\Gamma)$ of \textbf{Littlewood functions}. The latter is the space of all functions $f\colon \Gamma\to\CC$ admitting a decomposition
\begin{equation*}
f(x^{-1}y)=f_1(x,y)+f_2(x,y) \kern10mm \forall x,y \in \Gamma
\end{equation*}
with $f_i\colon \Gamma\times \Gamma\to\CC$ such that both of the following are finite:
$$\sup_x\sum_y|f_1(x,y)| \kern3mm\text{and}\kern3mm \sup_y\sum_x|f_2(x,y)|.$$
The connection is as follows. First, $\Gamma$ is amenable if and only if $T_1(\Gamma) \subseteq \ell^1(\Gamma)$. Secondly, if $\Gamma$ is unitarisable, then $T_1(\Gamma) \subseteq \ell^2(\Gamma)$. Thirdly, if $\Gamma$ contains a non-abelian free subgroup, then $T_1(\Gamma) \nsubseteq \ell^p(\Gamma)$ for all $p<\infty$.

These results prompted us to define the \textbf{Littlewood exponent} $\Lit(\Gamma)  \in [0, \infty]$ of a group $\Gamma$ as follows:
$$\Lit(\Gamma)=\inf\big\{ p : T_1(\Gamma) \subseteq \ell^p(\Gamma) \big\}.$$
It is straightforward that $\Lit(\Gamma)=0$ characterises finite groups and Wysocza\'nski's result~\cite{Wysoczanski} implies that amenable groups satisfy $\Lit(\Gamma)\leq 1$.

Our first result is the converse of the latter statement:

\begin{theorem}\label{beginning}
For every non-amenable group $\Gamma$ there exists $p>1$ such that
$$T_1(\Gamma) \nsubseteq \ell^p(\Gamma).$$
%
\end{theorem}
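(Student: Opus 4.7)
The plan is to argue by contrapositive: assume $T_1(\Gamma) \subseteq \ell^p(\Gamma)$ for every $p > 1$ and derive $T_1(\Gamma) \subseteq \ell^1(\Gamma)$, which by Wysoczański's characterization recalled above forces $\Gamma$ to be amenable. The closed graph theorem gives constants $C_p < \infty$ with $\|f\|_p \leq C_p \|f\|_{T_1}$ for every $f \in T_1(\Gamma)$ and $p > 1$. Two easy structural features of $T_1(\Gamma)$ enter. First, $T_1(\Gamma) \subseteq \ell^\infty(\Gamma)$ with $\|f\|_\infty \leq \|f\|_{T_1}$, since in any decomposition $f(x^{-1}y) = f_1(x,y) + f_2(x,y)$ each entry is bounded by the corresponding row sum of $|f_1|$ or column sum of $|f_2|$. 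Second, $T_1(\Gamma)$ is a Banach lattice: replacing $f_i(x,y)$ by $\overline{\operatorname{sign} f(x^{-1}y)} \cdot f_i(x,y)$ produces a decomposition of $|f|$ with identical row and column sums, so $\||f|\|_{T_1} \leq \|f\|_{T_1}$.

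The central step is a monotone-convergence argument. Taking $f \in T_1(\Gamma)$ with $0 \leq f \leq 1$ (by the lattice property, after rescaling), the hypothesis reads $\sum_{g \in \Gamma} f(g)^p \leq C_p^p \|f\|_{T_1}^p$ for each $p > 1$. Since $f(g) \in [0,1]$, the function $p \mapsto f(g)^p$ increases to $f(g)$ as $p \downarrow 1$, and monotone convergence gives $\|f\|_1 = \lim_{p \downarrow 1} \sum_g f(g)^p$. If the family $(C_p)_{p > 1}$ stays bounded as $p \downarrow 1$, we obtain $\|f\|_1 \leq C \|f\|_{T_1}$ for every $f$, and we are done.

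The hardest case is that $C_p \to \infty$ as $p \downarrow 1$ although each individual $C_p$ is finite: abstractly this can happen for generic subspaces of $\bigcap_{p > 1} \ell^p$ (e.g.\ $f(n) = 1/n$), so ruling it out must use more than just the inclusion. The strategy I would try uses translation-invariance of the $T_1$-norm: extract witnesses $f_n \in T_1(\Gamma)$ with $\|f_n\|_{T_1} \leq 1$ and $\|f_n\|_{p_n} \to \infty$ along $p_n \downarrow 1$, truncate each to finite support, translate them so their supports are pairwise disjoint, and amalgamate into $f = \sum_n \alpha_n L_{g_n} f_n$ with $(\alpha_n) \in \ell^1$ (ensuring $f \in T_1$ via translation-invariance and the triangle inequality), while the disjointness of supports yields $\|f\|_{p_0}^{p_0} = \sum_n \alpha_n^{p_0} \|f_n\|_{p_0}^{p_0}$, which can be arranged to diverge for some fixed $p_0 > 1$. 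The main obstacle here is that the divergence lives at the moving exponent $p_n$ rather than at a fixed $p_0$, and transferring it requires extra information on the shape of $p \mapsto C_p$. A likely cleaner route, more in the spirit of the paper's announced emphasis on Cheeger constants and large generating sets, is to sidestep the contradiction altogether and construct the witness $f$ directly from quantitative non-amenability: a Tarski paradoxical decomposition of $\Gamma$, or the positive Cheeger constant of a Cayley graph of a finitely generated non-amenable subgroup.
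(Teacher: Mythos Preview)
Your proposal is honest about its own gap, and the gap is real: none of the three branches you sketch closes. The monotone-convergence step works only if $\limsup_{p\downarrow 1} C_p<\infty$, and there is no abstract reason for this; as you yourself note, the function $n\mapsto 1/n$ shows that $\bigcap_{p>1}\ell^p$ is strictly larger than $\ell^1$, so some specific feature of $T_1(\Gamma)$ must be invoked. The gliding-hump attempt fails for the reason you identify: knowing that $\|f_n\|_{p_n}$ is large with $p_n\downarrow 1$ gives no lower bound on $\|f_n\|_{p_0}$ for a fixed $p_0>1$, since for $\|f_n\|_\infty\le 1$ one only has $\|f_n\|_{p_0}\le\|f_n\|_{p_n}$, and in fact the assumed inclusion $T_1\subseteq\ell^{p_0}$ caps $\|f_n\|_{p_0}$ by $C_{p_0}$. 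Your final suggestion---build the witness from quantitative non-amenability---is the right instinct, but it is not yet a proof.

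The paper's argument does exactly that, and the missing quantitative input is a theorem of Thom: for every non-amenable $\Gamma$ there is $\epsilon>0$ and arbitrarily large finite symmetric sets $S\subseteq\Gamma$ with $\rho(\Gamma,S)<|S|^{-\epsilon}$. One then tests the closed-graph inequality on the functions $\one_S$. Using Proposition~\ref{prop:N:op} one has $N(\one_S)\le\nop{\one_S}22=|S|\,\rho(\Gamma,S)$, while $\|\one_S\|_p=|S|^{1/p}$. If $T_1(\Gamma)\subseteq\ell^p(\Gamma)$ held, the closed-graph constant $c$ would give $|S|^{1/p}\le c\,|S|\,\rho(\Gamma,S)<c\,|S|^{1-\epsilon}$ for arbitrarily large $|S|$, forcing $1/p\le 1-\epsilon$. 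Hence any $p$ with $1<p<1/(1-\epsilon)$ satisfies $T_1(\Gamma)\nsubseteq\ell^p(\Gamma)$. The point is that the proof is \emph{direct}, not by contrapositive: one exhibits explicit finitely supported witnesses $\one_S$ whose $\ell^p$-norm outruns their $T_1$-norm, and the existence of such $S$ is precisely what non-amenability provides via the spectral-radius estimate.
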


The situation can therefore be summarised as follows (taking into account a further connection that we shall establish with the rapid decay property of Jolissaint).

\begin{cor}\label{cor:beginning}\leavevmode
\begin{itemize}
\item $\Lit(\Gamma)= 0$ if and only if $\Gamma$ is finite.
\item $\Lit(\Gamma)= 1$ if and only if $\Gamma$ is infinite amenable.
\item $\Lit(\Gamma)\leq 2$ if $\Gamma$ is unitarisable.
\item $\Lit(\Gamma)$ is outside the interval $(1,2)$ if $\Gamma$ has the rapid decay property.
\item $\Lit(\Gamma)= \infty$ if $\Gamma$ contains a non-abelian free subgroup.
\end{itemize}
\end{cor}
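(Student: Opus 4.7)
The plan is to treat the five items separately: items (1), (2), (5) reduce to bookkeeping against the Bo\.zejko--Fendler--Wysocza\'nski results and Theorem~\ref{beginning}; item (3) is a direct restatement; and item (4) is the genuinely new connection that will require substantial work.

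First, I would record the elementary inclusion $\ell^1(\Gamma) \subseteq T_1(\Gamma)$, obtained via the decomposition $f_1(x,y) := f(x^{-1}y)$ and $f_2 := 0$, whose row-sum is $\|f\|_1$. For infinite $\Gamma$, picking any $f \in \ell^1(\Gamma) \setminus \ell^p(\Gamma)$ for some $p<1$ then gives $\Lit(\Gamma) \geq 1$; for finite $\Gamma$ all $\ell^p$ spaces coincide, so $\Lit(\Gamma) = 0$. This yields item (1). Combining this lower bound with Wysocza\'nski's inclusion $T_1(\Gamma) \subseteq \ell^1(\Gamma)$ in the amenable case, and with Theorem~\ref{beginning} in the non-amenable case, gives item (2). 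Items (3) and (5) are direct restatements of two of the Bo\.zejko--Fendler--Wysocza\'nski implications recalled in the introduction.

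The real work is item (4). My plan is to prove its contrapositive: if $\Gamma$ has the rapid decay property and $\Lit(\Gamma) < 2$, then $\Gamma$ is amenable (so in fact $\Lit(\Gamma) \leq 1$). The strategy is to use Jolissaint's Haagerup-type inequality---which controls the reduced $C^*$-norm of a finitely supported function by a polynomially weighted $\ell^2$-norm---to upgrade a hypothetical inclusion $T_1(\Gamma) \subseteq \ell^p(\Gamma)$ with $p<2$ into the inclusion $T_1(\Gamma) \subseteq \ell^1(\Gamma)$, which then forces amenability via Wysocza\'nski's characterisation. The hardest part will be converting the $\ell^p$-integrability provided by a Littlewood decomposition into something compatible with the polynomial weight appearing in RD; I expect this to proceed via a dyadic decomposition along spheres of a word-length Cayley graph, combined with interpolation between the row and column sums of the decomposition.
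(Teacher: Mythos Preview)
Your treatment of items (1), (2), (3), (5) is correct and matches the paper's.

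For item (4), however, the paper takes a completely different route from what you propose, and your proposed route has a gap at the crucial step. The paper does \emph{not} attempt to upgrade an inclusion $T_1(\Gamma)\subseteq\ell^p(\Gamma)$ to $T_1(\Gamma)\subseteq\ell^1(\Gamma)$ for general Littlewood functions. Instead it passes through the asymptotic spectral invariant $r(\Gamma)$: for non-amenable $\Gamma$ with RD, one applies the Haagerup inequality to the indicator of $S_1^d$ (powers of a fixed generating set) to obtain $\rho(\Gamma,S_1^d)\leq P(d)\,|S_1^d|^{-1/2}$; exponential growth of $|S_1^d|$ then forces $r(\Gamma)\geq 1/2$. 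The chain $r(\Gamma)\leq\ibet(\Gamma)=1-1/\Lit(\Gamma)$ from Theorem~\ref{thm:Lit:ibet} and Corollary~\ref{chain} finishes the job. So the RD item is not self-contained: it genuinely rests on the isoperimetric machinery developed for Theorem~\ref{thm:Lit:ibet}.

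Your plan, by contrast, tries to work directly with an arbitrary $f\in T_1(\Gamma)$ via a dyadic sphere decomposition and ``interpolation between row and column sums''. I do not see how this can succeed as stated: the RD inequality bounds $\nop f22$ above by a polynomially weighted $\|f\|_2$, which is the wrong direction for improving decay of $f$ (you would need to bound $\|f\|_1$ above, not $\nop f22$). The row/column sums in a Littlewood decomposition control $\|f\|_{T_1}$, which is already finite by hypothesis and gives no new leverage toward $\ell^1$. If you want to salvage a direct argument, the move that works is to test against characteristic functions rather than general $f$: combining the closed-graph bound $\|\one_{S}\|_p\leq C\,N(\one_S)$, Proposition~\ref{prop:N:op}, and RD applied to $\one_S$ for $S$ a ball of radius $d$ yields $|S|^{1/p-1/2}\leq C\,P(d)$, hence polynomial growth and thus amenability. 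But this is essentially the paper's argument with the $r(\Gamma)$ bookkeeping unpacked, not the function-by-function upgrade you sketched.
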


A major question is to exhibit groups with $1< \Lit(\Gamma) <\infty$, and particularly with $1< \Lit(\Gamma)  \leq 2$. Concerning the last item of Corollary~\ref{cor:beginning}, we know that it is not a characterisation; adapting~\cites{EpsteinMonod, Osin}, we show:

\begin{theorem}\label{thm:torsion}
There exist finitely generated torsion groups $\Gamma$ with $\Lit(\Gamma)= \infty$.
\end{theorem}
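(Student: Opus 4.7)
The plan is to adapt the torsion-group constructions of \cite{EpsteinMonod} and \cite{Osin}. Those papers produce finitely generated torsion groups which are non-unitarisable, which in the present language shows only that $\Lit(\Gamma) \geq 2$. We aim to strengthen this to $\Lit(\Gamma) = \infty$ by producing, for every finite $p$, a Littlewood function on $\Gamma$ which fails to lie in $\ell^p(\Gamma)$.

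The first step is to revisit the fact that $\Lit(F_r) = \infty$ for a free group $F_r$, from the last item of Corollary~\ref{cor:beginning}, and make it quantitative: one exhibits, for each $k$, a function $f_k$ supported on the sphere $S_k$ of radius $k$ in the Cayley tree whose Littlewood decomposition $f_k(x^{-1}y) = f_1(x,y)+f_2(x,y)$ is obtained by splitting, via the unique geodesic from $x$ to $y$, the contribution of $x$ from that of $y$. The resulting row and column sums grow only polynomially in $k$, whereas $\|f_k\|_p = |S_k|^{1/p}$ grows exponentially in $k$ for every fixed $p<\infty$.

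The second step is to realise the torsion group $\Gamma$ as a direct limit of small cancellation quotients of $F_r$ in the spirit of \cite{Osin}, using the graphical refinements of \cite{EpsteinMonod} when needed. One arranges that, for each $k$ in a sparse sequence $k_1 < k_2 < \cdots$ fixed in advance, the $k$-ball of $F_r$ embeds ``essentially isometrically'' in $\Gamma$: distinct reduced words of length $\leq k_n$ continue to represent distinct elements of $\Gamma$ because no defining relator of the $n$-th stage is short enough to fit. Under this constraint the functions $f_{k_n}$ descend to $\Gamma$ with $T_1$- and $\ell^p$-norms comparable to those in $F_r$, and one concludes $T_1(\Gamma) \not\subseteq \ell^p(\Gamma)$ for every $p < \infty$.

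The main obstacle is the quantitative descent in the second step. When one imposes enough relators to force every element of $\Gamma$ to have finite order, one inevitably creates many identifications among words of $F_r$, and these identifications could conflate distinct pairs $(x,y)$ on which the decomposition $(f_1,f_2)$ was controlled. The technical heart of the argument is therefore to interleave the stages of the small cancellation construction with the sparse sequence $(k_n)$ so that at each stage one preserves the free-group Littlewood bounds at scale $k_n$, while simultaneously pushing all elements toward finite order at strictly larger scales. This is precisely the bookkeeping performed in \cite{EpsteinMonod} and \cite{Osin} for the borderline case $p=2$; the task here is to carry it through uniformly in $p$.
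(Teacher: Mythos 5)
Your proposal is a genuinely different route from the paper's, but it has a gap that I do not see how to close. The paper proves $\Lit(\Gamma)=\infty$ indirectly: it upgrades Proposition~\ref{prop:forst:p} from $p=2$ to general $p$ (using random forests and H\"older's inequality), obtains the $\ell^p$-version of the Epstein--Monod inequality in Theorem~\ref{thm:ibet:l2} relating $\ibet(\Gamma)$ to first $L^2$-Betti numbers of subgroups, and then plugs in Osin's $n$-generated torsion groups $\Gamma_n$ with $\beta^{(2)}_1(\Gamma_n)\geq n-2$ together with Ol\cprime shanski\u\i's embedding of $\bigoplus_n\Gamma_n$ into a $2$-generated simple torsion group. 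No explicit Littlewood function on $\Gamma$ is ever produced. Your sketch, by contrast, tries to transport sphere-supported Littlewood functions $f_{k_n}$ from $F_r$ to a small-cancellation quotient $\Gamma$; this is not what Epstein--Monod and Osin do, and describing their arguments as the ``$p=2$ borderline case'' of your bookkeeping misreads them.

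The concrete obstruction is the direction in which the $T_1$-norm behaves under quotients. Proposition~\ref{aforquotients} controls $T_1$ when one \emph{lifts} a function from a quotient to the free group, i.e.\ it gives $\Lit(\Gamma)\leq\Lit(F_r)$, which is vacuous here. Pushing $f_{k_n}$ \emph{down} to $\Gamma$ requires a decomposition $\tilde f_{k_n}(x^{-1}y)=\tilde f_1(x,y)+\tilde f_2(x,y)$ defined for \emph{all} pairs $(x,y)\in\Gamma\times\Gamma$ with bounded row and column sums, and the free-group decomposition via the unique geodesic from $x$ to $y$ does not descend: injectivity of $\pi$ on the ball $B_{k_n}$ only controls what happens near the identity, whereas the row sum $\sup_{x\in\Gamma}\sum_{y\in\Gamma}|\tilde f_1(x,y)|$ sees all of $\Gamma$, where later-stage relators (needed precisely to force torsion) create loops and destroy the tree structure that makes the geodesic split well-defined and the fibres small. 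Equivalently, in the language of $N'$: you would need the Cayley graph $\Cay(\Gamma,\pi(S_{k_n}))$ to satisfy $e(\Gamma,\pi(S_{k_n}))\lessapprox|S_{k_n}|^{-1+\epsilon}$ for all $\epsilon>0$, and the local isometry of $\pi$ on $B_{k_n}$ gives no lower bound on the global Cheeger constant $h(\Gamma,\pi(S_{k_n}))$ — indeed the spectral radius can only increase under quotients, so the free-group bound only goes the useless way. Even the sharp cogrowth estimates \`a la Adyan, as used in Theorem~\ref{thm:Burn}, cap out at $r(\Gamma)=1/2$ and hence only give $\ibet(\Gamma)\geq 1/2$, not $\ibet(\Gamma)=1$; this is exactly the gap the $L^2$-Betti number argument is designed to circumvent.
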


Our next result relates $\Lit(\Gamma)$ to the asymptotics of isoperimetric quantities attached to $\Gamma$ as follows. Given a finite symmetric subset $S\se\Gamma$, consider the (possibly disconnected) Cayley graph $\mathrm{Cay}(\Gamma,S)$. Recall that the \textbf{Cheeger constant} $h(\Gamma,S)$ is defined by
$$h(\Gamma,S)=\inf \limits_F \frac{|\partial_S(F)|}{|F|},$$
where the infimum runs over all non-empty finite subsets $F \se \Gamma$. Define the \textbf{relative maximal average degree} $e(\Gamma,S)$ by
$$e(\Gamma,S)=1-\frac{h(\Gamma,S)}{|S|}.$$
Finally, our asymptotic invariant is
$$\ibet(\Gamma)=-\liminf \limits_{S} \frac{\ln e(\Gamma,S)}{\ln|S|},$$
where the limes inferior is taken over all symmetric finite subsets $S$ of $\Gamma$. By convention, $\ibet(\Gamma)=-\infty$ if $\Gamma$ is finite. Informally, the quantity $\ibet(\Gamma)$ captures the largest exponent such that arbitrarily large sets $S$ can be found with $e(\Gamma,S) \lessapprox |S|^{-\ibet(\Gamma)}$. If $\Gamma$ is free, then one can check that $\ibet(\Gamma)=1$, whereas $\ibet(\Gamma)=0$ if $\Gamma$ is amenable.

\begin{theorem}\label{thm:Lit:ibet}
For any group $\Gamma$ we have $\ibet(\Gamma) = 1-1/\Lit(\Gamma)$.
\end{theorem}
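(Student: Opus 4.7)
The plan hinges on an exact identification between the Littlewood seminorm of an indicator function and the Cheeger quantity, which I would establish first. Concretely, for every finite symmetric $S \se \Gamma$,
\begin{equation*}
\|\one_S\|_{T_1(\Gamma)} \;\asymp\; |S| - h(\Gamma,S) \;=\; |S|\,e(\Gamma,S),
\end{equation*}
where $\|\cdot\|_{T_1(\Gamma)}$ is the natural seminorm $\inf(\sup_x\sum_y|f_1(x,y)|+\sup_y\sum_x|f_2(x,y)|)$ with the infimum over decompositions $f(x^{-1}y)=f_1(x,y)+f_2(x,y)$. The lower bound ``$\gtrsim$'' is a short linear-programming computation: a decomposition with row cap $a$ and column cap $b$ forces, for every finite $F\se\Gamma$, the count $|S||F|-|\partial_S F|$ of oriented edges internal to $F$ in $\Cay(\Gamma,S)$ to be at most $(a+b)|F|$, whence $|S|-h(\Gamma,S)\le a+b$. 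The matching upper bound is obtained by an explicit orientation of $\Cay(\Gamma,S)$ tailored to a near-optimal Cheeger set, with the parent/child split in free groups (which saturates $a+b = 2 = |S|-h$ for the standard generating set of $F_k$) serving as a prototype.

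Granting this identification, the direction $\ibet(\Gamma)\le 1-1/\Lit(\Gamma)$ is nearly immediate. For any $p>\Lit(\Gamma)$ the closed graph theorem supplies $C_p$ with $\|f\|_p\le C_p\|f\|_{T_1}$; applied to $f=\one_S$ this gives $|S|^{1/p}\le C_p\|\one_S\|_{T_1}\asymp|S|\,e(\Gamma,S)$, hence $e(\Gamma,S)\gtrsim|S|^{1/p-1}$ and $-\ln e(\Gamma,S)/\ln|S|\le 1-1/p+o(1)$ as $|S|\to\infty$. Taking the $\liminf$ over $S$ yields $\ibet(\Gamma)\le 1-1/p$, and letting $p\downarrow\Lit(\Gamma)$ completes this half.

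For the reverse inequality, I would fix $p>1/(1-\ibet(\Gamma))$ and show $T_1(\Gamma)\se\ell^p(\Gamma)$, which forces $\Lit(\Gamma)\le p$. Choose $\alpha>0$ with $1/p+\alpha<1-\ibet(\Gamma)$. By the definition of $\ibet$, all sufficiently large finite symmetric $S$ satisfy $e(\Gamma,S)\ge|S|^{-(1-1/p)+\alpha}$, which combined with the key identification rearranges to $\|\one_S\|_p\le C|S|^{-\alpha}\|\one_S\|_{T_1}$. To lift this from indicators to general $f\in T_1(\Gamma)$, I would use the layer-cake representation $|f|=\int_0^\infty\one_{S_t}\,dt$ with $S_t=\{|f|>t\}$: integrating optimal Littlewood decompositions of the $\one_{S_t}$ yields $\||f|\|_{T_1}\le\int_0^\infty\|\one_{S_t}\|_{T_1}\,dt$, and the indicator bound combined with the elementary Lorentz-type inequality $\|f\|_p\le C'_p\int_0^\infty|S_t|^{1/p}\,dt$ produces $\|f\|_p\le C\|f\|_{T_1}$.

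The main obstacle is the ``$\lesssim$'' half of the key identification, which requires an explicit combinatorial construction of a row/column decomposition on $\Cay(\Gamma,S)$ saturating the Cheeger bound; the parent/child prototype is suggestive but in general $S$ no canonical orientation exists and one must mimic the Cheeger optimiser by a careful matching argument. A secondary technicality in the layer-cake step is the compatibility of $\|\cdot\|_{T_1}$ with taking absolute values ($\||f|\|_{T_1}\asymp\|f\|_{T_1}$); this holds up to a universal factor and so suffices for the asymptotic statement.
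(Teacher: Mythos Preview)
Your key identification $\|\one_S\|_{T_1}\asymp |S|\,e(\Gamma,S)$ is correct, and the upper bound you flag as the ``main obstacle'' is in fact not one: it is Varopoulos' inequality $\|f\|_{T_1}\le 2N(f)$ together with the elementary $N\le 2N'$ and the exact formula $N'(\one_S)=|S|\,e(\Gamma,S)$, where $N'(f)=\sup_F|F|^{-1}\sum_{a,b\in F}|f(a^{-1}b)|$ (this last identity is precisely your lower-bound computation). No explicit Cayley-graph orientation is needed. With this in hand, your direction $\ibet(\Gamma)\le 1-1/\Lit(\Gamma)$ is correct and coincides with the paper's argument.

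The genuine gap is the layer-cake step for the reverse inequality. You correctly deduce the indicator estimate $\|\one_S\|_p\le C|S|^{-\alpha}\|\one_S\|_{T_1}$ from the hypothesis $1-1/p>\ibet(\Gamma)$, but the passage to general $f$ fails: your subadditivity $\||f|\|_{T_1}\le\int_0^\infty\|\one_{S_t}\|_{T_1}\,dt$ points the wrong way, and the inequality your chain actually needs, namely $\int_0^\infty\|\one_{S_t}\|_{T_1}\,dt\lesssim\|f\|_{T_1}$, is \emph{false}. Since $N'$ is a supremum of positive linear functionals indexed by finite sets $F$, one can arrange the level sets $S_t$ to have their optimising $F$'s point in independent directions; for instance in a free group of rank $k$ take $f=\sum_{i=1}^k c_i\,\one_{A_i}$ with $A_i=\{a_i^{\pm 1},\dots,a_i^{\pm n_i}\}$ and $c_in_i$ constant: then $N'(f)\asymp c_1n_1$ while $\int N'(\one_{S_t})\,dt\asymp k\cdot c_1n_1$. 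The paper avoids this by arguing contrapositively via a \emph{box trick}: from any $f\ge 0$ one extracts a single box $c\,\one_S\le f$ with $\|c\,\one_S\|_q\ge\zeta(p/q)^{-1/p}\|f\|_p$ for any $q<p$. Pointwise domination gives $N'(c\,\one_S)\le N'(f)$ for free, so a sequence witnessing $T_1(\Gamma)\nsubseteq\ell^p(\Gamma)$ is converted into a sequence of symmetric sets $S_n$ with $N'(\one_{S_n})/\|\one_{S_n}\|_q\to 0$, and the identity $N'(\one_S)=|S|\,e(\Gamma,S)$ then reads off $\ibet(\Gamma)\ge 1-1/q$; letting $q\uparrow p<\Lit(\Gamma)$ finishes.
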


Thus we have a quantitative isoperimetric measure of non-amenability $0< \eta(\Gamma) \leq 1$ for which unitarisability implies $\eta(\Gamma)\leq 1/2$. We have currently no proof that $\eta(\Gamma)$ can take values within $(0,1/2]$; this seems related to the fact that Dixmier's question remains open.

In a similar manner to $\ibet(\Gamma)$, we define
$$r(\Gamma)=-\liminf \limits_{S }\frac{\ln \rho(\Gamma,S)}{\ln|S|},$$
where $\rho(\Gamma,S)\in (0, 1]$ is the spectral radius of the Markov operator associated to $S$. We think of all these invariants as rough guides in the labyrinth of groups that are non-amenable while not containing a non-abelian free subgroup. Combining Theorem~\ref{thm:Lit:ibet} with Cheeger inequalities, we obtain the following.

\begin{cor}\label{chain}
For any infinite group $\Gamma$ we have
$$0 \leq r(\Gamma) \leq \ibet(\Gamma)=1-\frac{1}{\Lit(\Gamma)} \leq 2r(\Gamma) \leq 1.$$
\end{cor}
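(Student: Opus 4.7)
The middle equality $\ibet(\Gamma) = 1 - 1/\Lit(\Gamma)$ is exactly Theorem~\ref{thm:Lit:ibet}, so the task reduces to $0 \leq r(\Gamma) \leq \ibet(\Gamma)$ and $\ibet(\Gamma) \leq 2r(\Gamma) \leq 1$. My plan is to apply the discrete Cheeger inequalities to each symmetric finite $S$ separately and then pass to the limsup over $S$. I abbreviate $d = |S|$, $e = e(\Gamma, S)$, and $\rho = \rho(\Gamma, S)$, and write $P = d^{-1}\sum_{s \in S}\lambda(s)$ for the Markov operator on $\ell^2(\Gamma)$, which is self-adjoint because $S = S^{-1}$.

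For $r(\Gamma) \leq \ibet(\Gamma)$ I use Dodziuk's lower bound $\rho \geq 1 - h(\Gamma,S)/d = e$, obtained by evaluating the Rayleigh quotient of $P$ on the normalised characteristic function $\mathbf{1}_F/\sqrt{|F|}$ and optimising over non-empty finite $F \subseteq \Gamma$. Taking $-\ln$, dividing by $\ln d$, and passing to the limsup in $S$ (noting that $\ibet$ and $r$ are $-\liminf$'s of non-positive quantities, hence limsup's of non-negative quantities) gives the claim. For $\ibet(\Gamma) \leq 2r(\Gamma)$ I invoke the stronger Cheeger--Buser--Mohar inequality $\rho^{2} \leq 1 - (h/d)^{2} = e(2-e)$, which holds on any locally finite graph via the standard co-area argument. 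Taking logarithms yields
\[-\ln e \;\leq\; 2(-\ln \rho) + \ln(2-e) \;\leq\; 2(-\ln \rho) + \ln 2,\]
and after dividing by $\ln d$ the $\ln 2$ contribution vanishes in the limsup, giving $\ibet(\Gamma) \leq 2r(\Gamma)$.

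The extremal bounds are short. First, $r(\Gamma) \geq 0$ since $\rho \leq 1$ ($P$ is a convex combination of unitaries). Second, self-adjointness of $P$ gives
\[\rho^{2} \;=\; \|P\|^{2} \;\geq\; \langle P^{2}\delta_{e},\delta_{e}\rangle \;=\; P^{2}(e,e) \;=\; 1/d,\]
because $S = S^{-1}$ supplies exactly $d$ pairs $(s,s^{-1})$ contributing to $P^{2}(e,e)$; hence $-\ln \rho \leq \tfrac{1}{2}\ln d$ and $r(\Gamma) \leq 1/2$ after passage to the limsup.

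The main delicate point is the appeal to the Cheeger--Buser--Mohar estimate $\rho^{2} \leq 1 - (h/d)^{2}$ in the infinite-graph regime: the crude Dodziuk lower bound $1 - \rho \geq (h/d)^{2}/2$ on its own would only produce the square-root comparison $-\ln e \lesssim \sqrt{-\ln \rho}$, which is too weak to deliver $\ibet \leq 2r$. Everything else is routine bookkeeping, and the tightness of the chain is confirmed by the free group, where $\rho^{2} = 1 - (h/d)^{2}$ with standard generators and all four inequalities become sharp in the limit.
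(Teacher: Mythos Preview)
Your proof is correct and follows essentially the same route as the paper: the equality is Theorem~\ref{thm:Lit:ibet}, the two middle inequalities come from Mohar's Cheeger inequalities (the paper records exactly your reformulations $e\leq\rho$ and $e\geq\tfrac12\rho^2$ in Remark~\ref{rem:Mohar} and packages $r\leq\ibet\leq 2r$ as Proposition~\ref{betaandr}), and the endpoint bounds are handled separately. The one small difference is your argument for $r(\Gamma)\leq 1/2$: the paper quotes Kesten's lower bound $\rho\geq(|S|-1)^{1/2}/|S|$, whereas you use the two-step return probability $\rho^2\geq\langle P^2\delta_e,\delta_e\rangle=1/|S|$, which is a cleaner self-contained estimate yielding the same conclusion. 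Your closing commentary on what the weaker Dodziuk form $1-\rho\geq(h/d)^2/2$ would yield is slightly imprecise (for small $\rho$ it gives no lower bound on $e$ at all, rather than a square-root comparison), but this does not affect the argument, since you correctly invoke Mohar's sharper inequality $\rho^2\leq 1-(h/d)^2$.
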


The following result --- a consequence of graphical small cancellation theory for hyperbolic groups --- shows that the invariant is indeed non-trivial in the sense that there exist groups with $\Lit \not \in \{0,1,\infty\}$.

\begin{theorem} \label{nontrivial}
There exists a group $\Lambda$ with $1 < \Lit(\Lambda)< \infty$.
\end{theorem}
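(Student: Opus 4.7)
By Theorem~\ref{thm:Lit:ibet} the conclusion $1<\Lit(\Lambda)<\infty$ is equivalent to $0<\ibet(\Lambda)<1$. The lower inequality amounts, by Corollary~\ref{cor:beginning}, to $\Lambda$ being non-amenable, while the upper inequality $\ibet(\Lambda)<1$ in particular forbids $\Lambda$ from containing any non-abelian free subgroup (again by Corollary~\ref{cor:beginning}). Unwinding the definition using the identity $|S|\cdot|F|-|\partial_S F|=2\cdot\#\{\text{edges inside }F\}$, the strict upper bound $\ibet(\Lambda)\leq c<1$ translates into the following uniform isoperimetric statement: there exists $c<1$ such that every finite symmetric $S\se\Lambda$ of sufficiently large cardinality admits a non-empty finite $F\se\Lambda$ whose induced subgraph in $\Cay(\Lambda,S)$ has average degree at least $|S|^{1-c}$; equivalently $h(\Lambda,S)\leq|S|-|S|^{1-c}$. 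Producing such an $F$ for \emph{every} $S$ is the geometric heart of the statement.

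The plan is to construct $\Lambda$ by graphical $C'(\lambda)$ small cancellation, as a direct limit of non-elementary hyperbolic groups
$$\Lambda_0\twoheadrightarrow\Lambda_1\twoheadrightarrow\cdots\twoheadrightarrow\Lambda=\varinjlim\Lambda_n,$$
each step $\Lambda_n\twoheadrightarrow\Lambda_{n+1}$ being obtained by imposing relators read along cycles in a carefully designed finite labelled graph $G_n$. This is in the spirit of the Gromov monster construction, of Osin's non-amenable torsion groups and of the construction underlying Theorem~\ref{thm:torsion}. The graphs $(G_n)_{n\in\NN}$ are to be engineered so that simultaneously: (i)~enough conjugacy classes are killed along the sequence to prevent non-abelian free subgroups in the limit; (ii)~a fixed family of expander-like labelled subgraphs survives to the limit, witnessing non-amenability via the classical lower bound on $h$ in terms of spectral expansion; (iii)~at every scale $d$ there are dense labelled subgraphs embedding faithfully and isometrically in $\Cay(\Lambda,X)$ with average degree comparable to $d^{1-c}$, providing a reservoir of candidate sets $F$.

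The main obstacle is to promote item (iii) to the uniform bound $e(\Lambda,S)\geq|S|^{-c}$ for \emph{every} finite symmetric $S\se\Lambda$, not just for $S$ aligned with the canonical alphabet $X$. The strategy is: given an adversarial $S$ of cardinality $d$, locate at stage $n=n(d)$ a translate of $G_n$ whose edge labels are expressible as short words in $S^{\pm 1}$; the image of this translate in $\Cay(\Lambda,S)$ then supplies the required dense $F$, and graphical small cancellation ensures both injectivity of the embedding and the quantitative density one needs. Carrying this out for every possible $S$ forces a very careful combinatorial design of the graphs $(G_n)$, tuned to every scale of generating-set size, and it is at this stage that the full strength of graphical small cancellation for hyperbolic groups is used. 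Once steps (i)--(iii) and the uniformity argument are in place, Theorem~\ref{thm:Lit:ibet} converts the bounds on $\ibet(\Lambda)$ into the desired bounds on $\Lit(\Lambda)$.
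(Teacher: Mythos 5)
Your reduction via Theorem~\ref{thm:Lit:ibet} and Corollary~\ref{cor:beginning} is correct, and the general shape of the construction---an inductive limit of torsion-free non-elementary hyperbolic groups obtained by graphical small cancellation over a sequence of graphs---matches the paper. However, two substantive ideas are misidentified or missing.

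First, the role of the expander graphs is inverted in your item~(ii). You suggest that the surviving expander subgraphs ``witness non-amenability via the classical lower bound on $h$ in terms of spectral expansion''. But a dense finite subgraph embedded in $\Cay(\Lambda,S)$ gives an \emph{upper} bound on $h(\Lambda,S)$ (a large finite set $F$ with many internal edges is a partial F\o lner set), which is exactly what drives the bound $\ibet(\Lambda)<1$. This is a quantitative form of amenability, not non-amenability, and the paper emphasises that this is the opposite of how expanders have been used in previous graphical small cancellation constructions. Non-amenability of the limit is obtained by a separate mechanism that your sketch does not supply: the base group $\Gamma_0$ is chosen to be a non-elementary torsion-free hyperbolic \emph{Kazhdan} group, so every quotient and hence the inductive limit $\Lambda$ retains property~(T) and is therefore non-amenable, giving $\Lit(\Lambda)>1$ via Theorem~\ref{beginning}.

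Second, your strategy for achieving the bound for \emph{every} finite symmetric $S$ would not go through. You propose, given an adversarial $S$, to locate a translate of a fixed $X$-labelled graph $G_n$ ``whose edge labels are expressible as short words in $S^{\pm1}$'' and push it into $\Cay(\Lambda,S)$. There is no control over the distortion between $S$-length and $X$-length for an arbitrary $S$, and the resulting $S$-paths need not be edges of $\Cay(\Lambda,S)$ at all, so this does not yield a dense induced subgraph on the $S$-Cayley side. The paper instead handles the universal quantifier by \emph{enumerating} all finite symmetric subsets $(\Sigma_n)_{n\geq1}$ in advance and, at the $n$-th inductive step, running the small cancellation theorem (Theorem~\ref{theorem:hyperbolic_quotient}, built on Proposition~\ref{prop:small_cancellation}) with respect to the $S_n$-labelled Cayley graph directly, where $S_n$ is the image of $\Sigma_n$ in $\Gamma_{n-1}$, and choosing a Ramanujan graph of degree roughly $|S_n|^\epsilon$ randomly labelled by $S_n$. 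Injectivity on an increasing chain of finite sets $K_n$ (containing the witnesses $F_n$ and the previous $S_k$) ensures these density witnesses survive to the limit. Without this enumeration-per-generating-set scheme, and without the hyperbolic-quotient step that allows choosing the graph's degree scale adaptively for each $S$, your plan has no way to certify the required uniformity in $S$. Finally, item~(i) of your plan (killing conjugacy classes to forbid free subgroups) is not needed as a separate step: the absence of non-abelian free subgroups is a consequence, via Theorem~\ref{thm:Lit:free}, of the already-established bound $\Lit(\Lambda)<\infty$.
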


Our construction provides a group $\Lambda$ for which the Cayley graphs $\Cay(\Lambda,S)$ contain images of a Ramanujan graphs of vertex-degree at least $|S|^\epsilon$ for a fixed $\epsilon>0$. The Ramanujan graphs thus provide large finite subsets of vertices containing many internal edges and give upper bounds on $h(\Lambda,S)$. Hence, they can be thought of as analogous to F\o lner sets, providing a certain quantitative degree of amenability in each of the Cayley graphs. This strongly contrasts the way Ramanujan graphs have been utilized thus far in graphical small cancellation constructions: until now, their spectral properties have been used to provide groups satisfying strong negations of amenability, such as non-coarse embeddability into Hilbert spaces \cites{MR1978492,arshdel} and fixed-point properties for actions on $L^p$-spaces \cite{Naor-Silberman}.

Unfortunately, the method employed for the proof cannot be used to establish $\Lit(\Lambda)\leq 2$.

\medskip

Using the connection between the spectral radius and the Littlewood exponent together with Adyan's results~\cite{Adyan}, we can then estimate $\Lit(\Gamma)$ for Burnside groups of large exponent.

\begin{theorem}
Let $B(m,a)$ be the free Burnside group of exponent $a$ on $m$ generators, where $m \geq 2$, $a \geq 665$ and $a$ is odd. Then $\Lit(B(m,a)) \geq 3/2$.
\end{theorem}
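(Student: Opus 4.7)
The plan is to combine Corollary~\ref{chain} with Adyan's spectral estimate for random walks on free Burnside groups. Since $\Lit(\Gamma) \geq 1/(1-r(\Gamma))$ for any infinite group $\Gamma$, it suffices to establish $r(B(m,a)) \geq 1/3$. The key analytic input is Adyan's theorem \cite{Adyan}: for $m \geq 2$ and $a \geq 665$ odd, the simple symmetric random walk on $B(m,a)$ with respect to the canonical generating set $S_0 = \{g_i^{\pm 1}\}$ of size $2m$ has spectral radius
$$\rho_0 := \rho(B(m,a),S_0) = \frac{\sqrt{2m-1}}{m},$$
exactly as for the free group $F_m$. Equivalently, the convolution operator $\mu_0 := |S_0|^{-1}\chi_{S_0}$ has operator norm $\rho_0$ on $\ell^2(B(m,a))$; this rests on Adyan's cogrowth estimate via the Grigorchuk--Cohen formula.

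To obtain the lower bound on $r$, I would consider the family of generating sets $T_k := S_0^k$, the set of $k$-fold products in $B(m,a)$, whose cardinality tends to infinity with $k$. Pointwise, $\chi_{T_k} \leq \chi_{S_0}^{*k}$, so by the monotonicity of operator norms for positive symmetric convolutions,
$$\|\chi_{T_k}\|_{\mathrm{op}} \leq \|\chi_{S_0}^{*k}\|_{\mathrm{op}} = (|S_0|\rho_0)^k = (2\sqrt{2m-1})^k.$$
Adyan's growth estimate gives $|T_k| \asymp (2m-1)^k$, so $\rho(B(m,a), T_k) \leq c(m)(2/\sqrt{2m-1})^k$. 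Taking logarithms yields, in the limit $k \to \infty$, a lower bound
$$r(B(m,a)) \geq \frac{1}{2} - \frac{\ln 2}{\ln(2m-1)},$$
which already exceeds $1/3$ once $m$ is sufficiently large.

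The main obstacle is the remaining small-$m$ case, particularly $m = 2$: then $2/\sqrt{2m-1} \geq 1$, the elementary comparison $\chi_{T_k} \leq \chi_{S_0}^{*k}$ becomes too lossy, and the bound on $\rho(B(m,a), T_k)$ is vacuous. The plan is to refine the argument by invoking sharper consequences of Adyan's analysis, notably the Kesten-type pointwise estimate $\mu_0^{*k}(e) \lesssim k^{-3/2}\rho_0^k$, valid on $B(m,a)$ because the geodesic structure remains tree-like up to the critical relator length and the bound extends globally by amplification. Combining this decay with a Schur-type operator bound for $\chi_{T_k}$ that uses the detailed tail behaviour of $\mu_0^{*k}$ rather than only the crude convolution comparison, one obtains the required refinement of $\|\chi_{T_k}\|_{\mathrm{op}}$ and hence $r(B(m,a)) \geq 1/3$ uniformly in $m \geq 2$. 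The conclusion $\Lit(B(m,a)) \geq 3/2$ then follows from Corollary~\ref{chain}.
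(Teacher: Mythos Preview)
Your proposal has a genuine gap. First, the attribution to Adyan is inaccurate: Adyan does \emph{not} prove that $\rho(B(m,a),S_0)=\sqrt{2m-1}/m$. What he proves is an upper bound on the cogrowth, $\alpha\leq(2m-1)^{\delta}$ for some $\delta<2/3$, which via the Grigorchuk--Cohen formula yields only $\rho(B(m,a),S_0)\leq(2m-1)^{\delta}/m$; see~\eqref{eq:cogrowth} and~\eqref{eq:Burn}. If the spectral radius were literally the free-group value, the cogrowth would attain its minimum $\sqrt{2m-1}$, which is not what is known. This weaker input makes the powers-of-$S_0$ computation even more lossy than you already acknowledge.

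Second, and more seriously, your treatment of small $m$ is not an argument. You correctly identify that for $m=2$ the comparison $\chi_{T_k}\leq\chi_{S_0}^{*k}$ is vacuous, but the proposed remedy---a ``Kesten-type pointwise estimate'' combined with a ``Schur-type operator bound'' and the assertion that ``the bound extends globally by amplification''---is a sketch of hopes, not a proof. No concrete inequality is stated that would control $\nop{\chi_{T_k}}22$ sharply enough, and there is no reason to expect the return-probability asymptotic $\mu_0^{*k}(e)\lesssim k^{-3/2}\rho_0^k$ alone to yield an operator-norm improvement of the required order.

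The paper avoids this difficulty entirely by a different mechanism. It first reduces to $m=2$ via Lemma~\ref{lem:rsurjection} (since $B(2,a)$ is a quotient of $B(m,a)$), and then, crucially, invokes \v{S}irvanjan's theorem that $B(m,a)$ embeds into $B(2,a)$ for every $m$. The standard generating sets of these embedded copies furnish a sequence of symmetric subsets $S_m\subseteq B(2,a)$ with $|S_m|=2m\to\infty$ and $\rho(B(2,a),S_m)\leq(2m-1)^{-1/3}$ for $m$ large; this gives $r(B(2,a))\geq 1/3$ directly from the definition, with no need to analyse powers or small-$m$ refinements. The embedding is the missing idea in your approach.
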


The invariants introduced in this article also have applications to estimating the chromatic number of (infinite) Cayley graphs. For convenience, we will say that a graph is $r$-colourable for $r\in \mathbf{R}$ if it is $\left \lfloor r \right \rfloor$-colourable in the usual sense, i.e.\ there exists a colouring of the vertices using ${\lfloor r \rfloor}$ colours such that adjacent vertices get different colours.


\begin{cor}\label{colourcor}
Let $\Gamma$ be a group and $\alpha<\Lit(\Gamma)$. Then there exists arbitrarily large finite symmetric sets $S$ such that ${\rm Cay}(\Gamma,S)$ is $\sqrt[\alpha]{|S|}$-colourable.  
\end{cor}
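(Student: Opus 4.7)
My plan combines Theorem~\ref{thm:Lit:ibet} with the classical degeneracy-to-chromatic-number argument. Since $\alpha<\Lit(\Gamma)$, Theorem~\ref{thm:Lit:ibet} gives $1-1/\alpha<\ibet(\Gamma)$. To absorb a later off-by-one loss I would first fix an auxiliary $\alpha'\in(\alpha,\Lit(\Gamma))$, so that $1-1/\alpha'<\ibet(\Gamma)$ as well. The definition of $\ibet$ as a $-\liminf$ then furnishes arbitrarily large finite symmetric $S\subseteq\Gamma$ satisfying
$$\frac{\ln e(\Gamma,S)}{\ln|S|}<\frac{1}{\alpha'}-1, \qquad\text{i.e.}\qquad e(\Gamma,S)<|S|^{1/\alpha'-1}.$$

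Next I would translate this Cheeger bound into a statement about average degrees in finite subgraphs of $\Cay(\Gamma,S)$. A direct count shows that for any finite $F\subseteq\Gamma$ the subgraph of $\Cay(\Gamma,S)$ induced on $F$ has average degree $|S|-|\partial_S F|/|F|$, whose supremum over $F$ equals $|S|-h(\Gamma,S)=|S|\cdot e(\Gamma,S)<|S|^{1/\alpha'}$. In particular every such $F$ contains a vertex whose degree inside $F$ is strictly below $|S|^{1/\alpha'}$.

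Finally, since $1/\alpha'<1/\alpha$, for $|S|$ large one has $|S|^{1/\alpha'}\leq|S|^{1/\alpha}-1$, so every finite subgraph of $\Cay(\Gamma,S)$ contains a vertex of degree at most $\lfloor|S|^{1/\alpha}\rfloor-1$. That is, $\Cay(\Gamma,S)$ is $(\lfloor|S|^{1/\alpha}\rfloor-1)$-degenerate, and a greedy colouring, combined with De~Bruijn--Erd\H{o}s to pass from finite subgraphs to the full (possibly uncountable) Cayley graph, yields a proper colouring with $\lfloor|S|^{1/\alpha}\rfloor$ colours, matching the paper's convention for $\sqrt[\alpha]{|S|}$-colourability. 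The main --- and only --- technical point I anticipate is this floor-function bookkeeping, which is precisely why the slack parameter $\alpha'$ is introduced; everything else is a routine unwinding of definitions.
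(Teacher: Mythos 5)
Your proof is correct and follows essentially the same route as the paper's: translate $\alpha<\Lit(\Gamma)$ into an isoperimetric bound $e(\Gamma,S)<|S|^{1/\alpha'-1}$ via Theorem~\ref{thm:Lit:ibet} with a slack exponent, interpret $|S|\cdot e(\Gamma,S)$ as a bound on the maximal average degree of finite induced subgraphs (which is exactly the paper's ${\rm mad}$ and Proposition~\ref{prop:mad}), and conclude colourability from degeneracy plus a compactness argument. The one small point you gloss over is the case $e\in S$: then $\Cay(\Gamma,S)$ has loops, so it is not properly colourable at all and the average-degree count is off by $|L_S(F)|/|F|$; the paper handles this by noting one may drop the identity from $S$ without loss.
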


Another use of $\Lit(\Gamma)$ is related to the first $\ell^2$-Betti number $\beta^{(2)}_1$. Adapting the method of~\cite{EpsteinMonod}, we obtain:

\begin{cor}\label{bettili}
Let $\Gamma$ be a finitely generated group. If $\Gamma$ is residually finite and $\Lit(\Gamma)<\infty$, then $\beta^{(2)}_1(\Gamma)=0$.
\end{cor}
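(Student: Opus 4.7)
The plan is to prove the contrapositive: assuming $\Gamma$ is residually finite with $\beta^{(2)}_1(\Gamma)>0$, the goal is to derive $\Lit(\Gamma)=\infty$. The first ingredient is that $\Lit$ is invariant under passage to finite-index subgroups. For any $H\leq\Gamma$ of finite index, extension by zero $\tilde f=f\cdot\mathbf{1}_H$ sends $T_1(H)$ into $T_1(\Gamma)$ with $\|\tilde f\|_{T_1}\leq\|f\|_{T_1}$ (one transports the Littlewood decomposition using a fixed coset section) and preserves $\ell^p$-norms; conversely a function $f\in T_1(\Gamma)$ restricts, after left-translation by each coset representative, to a function in $T_1(H)$, and since $[\Gamma:H]<\infty$ one has $f\in\ell^p(\Gamma)$ if and only if each such restriction lies in $\ell^p(H)$. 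This gives $\Lit(\Gamma)=\Lit(H)$.

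Combined with the multiplicativity $\beta^{(2)}_1(H)=[\Gamma:H]\,\beta^{(2)}_1(\Gamma)$ for finite-index $H$, residual finiteness supplies subgroups $\Gamma_n\leq\Gamma$ of index tending to infinity, hence with $\beta^{(2)}_1(\Gamma_n)\to\infty$. The problem then reduces to a quantitative version of the Epstein--Monod theorem: for every residually finite finitely generated group $G$, I would establish
$$\Lit(G)\geq\varphi(\beta^{(2)}_1(G)) \qquad\text{with}\qquad \varphi(t)\to\infty \text{ as } t\to\infty.$$
Applying this to the $\Gamma_n$ and using finite-index invariance then gives $\Lit(\Gamma)=\Lit(\Gamma_n)\geq\varphi(\beta^{(2)}_1(\Gamma_n))\to\infty$, hence $\Lit(\Gamma)=\infty$.

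The hard part will be establishing this quantitative bound. The construction of \cite{EpsteinMonod} produces, via a Bernoulli-type averaging along a descending tower of finite-index subgroups of $G$, an invariant random forest $\mathcal{F}$ on a Cayley graph of $G$ whose expected edge-density at the origin is bounded below in terms of $\beta^{(2)}_1(G)$; the associated kernel $k(x,y)=\mathbf{P}[(x,y)\in\mathcal{F}]$ is a Herz--Schur multiplier and yields a Littlewood function $f\notin\ell^2(G)$. To exclude $\ell^p(G)$ for $p$ growing with $\beta^{(2)}_1(G)$, I would tune the forest so that the component-size or path-length distribution of $\mathcal{F}$ has polynomial tails whose exponent decreases as $\beta^{(2)}_1(G)$ grows; the room for this refinement is afforded by the larger expected degree of $\mathcal{F}$, which is bounded below by $2(1+\beta^{(2)}_1(G))$ via Gaboriau's cost inequality. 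Carrying out this tail analysis---tracking how the $\ell^p$-norm of $f$ scales with the density parameter of the forest---is the technical core of the argument.
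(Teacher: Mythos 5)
Your high-level strategy---pass to finite-index subgroups $\Gamma_n$ via residual finiteness, exploit finite-index invariance of $\Lit$, and blow up a forest-based lower bound as $\beta^{(2)}_1(\Gamma_n)\to\infty$---is indeed in the spirit of the paper's argument. The finite-index invariance of $\Lit$ you sketch is correct and is exactly the paper's Proposition on finite-index subgroups. But the key quantitative lemma you propose to carry the argument has a gap that the paper's own route is carefully designed to avoid.

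You posit a \emph{universal} bound $\Lit(G)\geq\varphi(\beta^{(2)}_1(G))$ with $\varphi(t)\to\infty$, depending only on the first $L^2$-Betti number. The random-forest machinery does not give this, and your proposed refinement does not close the gap. What Proposition~\ref{prop:forst:p} yields is a H\"older estimate
$\|f_\mu\|_p \geq \deg(\mu)\,\bigl(\mathrm{width}(\mu)\bigr)^{-(p-1)/p}$
together with $\|f_\mu\|_{T_1}\leq 2$, so that $T_1(G)\se\ell^p(G)$ forces a bound on the \emph{ratio} $\deg(\mu)^{p/(p-1)}/\mathrm{width}(\mu)$. For the free uniform spanning forest the degree is bounded below by $2\beta^{(2)}_1(G)$, but the width is only bounded above by $2\,\mathrm{rk}(G)$, so what actually comes out is a bound on $\beta^{(2)}_1(G)/\mathrm{rk}(G)^{(p-1)/p}$, not on $\beta^{(2)}_1(G)$ alone. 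This is precisely Theorem~\ref{thm:ibet:l2}: for $\epsilon>\ibet(\Gamma)$, the quantity $\beta^{(2)}_1(\Lambda)/\mathrm{rk}(\Lambda)^\epsilon$ is bounded uniformly over subgroups $\Lambda$. A width-free lower bound of the form you want is not produced by this argument, and the ``polynomial tail'' tuning you describe does not obviously eliminate the width dependence---spreading the forest's neighbours among more vertices (large width for fixed degree) makes $f_\mu$ \emph{more} $\ell^p$-integrable, not less, so you would be fighting against the H\"older estimate, not exploiting it.

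The paper's proof sidesteps this entirely by not seeking a width-free lemma. It applies Theorem~\ref{thm:ibet:l2} directly and uses that for finite-index $\Lambda<\Gamma$ both sides of the ratio grow at the \emph{same} linear rate in the index $n=[\Gamma:\Lambda]$: $\beta^{(2)}_1(\Lambda)=n\,\beta^{(2)}_1(\Gamma)$ (multiplicativity) and $\mathrm{rk}(\Lambda)\leq n\,\mathrm{rk}(\Gamma)$ (Reidemeister--Schreier). Hence $\beta^{(2)}_1(\Lambda)/\mathrm{rk}(\Lambda)^\epsilon\gtrsim n^{1-\epsilon}\to\infty$ whenever $\epsilon<1$, i.e.\ whenever $\Lit(\Gamma)<\infty$, yielding the contradiction. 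If you reformulate your key lemma to depend on both $\beta^{(2)}_1(G)$ and $\mathrm{rk}(G)$ in this way, your outline becomes the paper's proof.
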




\section{Preliminaries} 
\subsection{Spectral radius and isoperimetry}
We denote by $\nop a p q$ the norm of an operator $a\colon\ell^p(\Gamma)\to \ell^q(\Gamma)$. We extend this notation to any element $a$ of the group ring $\CC[\Gamma]$, considered as a left convolution operator. We further extend the notation to kernels $a\colon \Gamma\times \Gamma\to \CC$ whenever the associated operator given by $v(x)\mapsto \sum_{y\in\Gamma} a(x,y) v(y)$ for $v\in\ell^p(\Gamma)$ and $x\in \Gamma$ is well-defined.

Given a finite symmetric subset $S \subseteq \Gamma$, we denote by $M_S=\frac{1}{|S|}\one_S\in {\mathbf C}[\Gamma]$ the \textbf{Markov operator}. Since $S$ is symmetric, the \textbf{spectral radius} $\rho(\Gamma,S)$ of $M_S$ is realised by $\rho(\Gamma,S) = \nop{M_S}22$. We refer to~\cite{woess} for more information on the spectral radius and its relation to random walks on $\Gamma$.

We consider the Cayley graph ${\rm{\rm Cay}}(\Gamma,S)$, which is connected iff $S$ generates $\Gamma$. Given a finite subset $F\se \Gamma$, we denote by $E_S(F)$ the set of edges of the subgraph of ${\rm{\rm Cay}}(\Gamma,S)$ induced on $F$. The \textbf{edge-boundary} $\partial_S F$ of $F$ is the set of those edges in ${\rm{\rm Cay}}(\Gamma,S)$ that connect $F$ to its complement.  The \textbf{edge-isoperimetric constant} or \textbf{Cheeger constant} of ${\rm{\rm Cay}}(\Gamma,S)$ is defined by
$$h(\Gamma,S) := \inf \limits_{F} \frac{|\partial_S F|}{|F|},$$
where the infimum is taken over all non-empty finite subsets $F \se \Gamma$. We recall the following \emph{Cheeger inequalities}.

\begin{theorem}[Mohar~\cite{Mohar}]\label{thm:Mohar}
$$|S|(1 - \rho(\Gamma,S)) \leq h(\Gamma,S)\leq |S|\sqrt{1-\rho(\Gamma,S)^2}. \eqno{\qed}$$
\end{theorem}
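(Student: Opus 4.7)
The plan is to prove the two inequalities separately: the lower bound via a Rayleigh-quotient test on indicator functions, and the upper bound via a coarea argument combined with Cauchy--Schwarz.

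For the lower bound, given a finite nonempty $F \se \Gamma$, test the self-adjoint operator $M_S$ on $\chi_F \in \ell^2(\Gamma)$. A direct count gives $\langle M_S \chi_F, \chi_F\rangle = 2|E_S(F)|/|S|$, because each unordered edge of the induced subgraph on $F$ corresponds to two pairs $(x,s),(sx,s^{-1})$ with both endpoints in $F$. Combined with the identity $|S|\,|F| = 2|E_S(F)| + |\partial_S F|$, this yields $\langle (I - M_S)\chi_F, \chi_F\rangle = |\partial_S F|/|S|$. Since $\|M_S\|_{2\to 2} = \rho(\Gamma,S)$ and $M_S$ is self-adjoint, its spectrum sits inside $[-\rho,\rho]$, so $\langle (I - M_S)\chi_F, \chi_F\rangle \geq (1-\rho(\Gamma,S))\,|F|$. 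Dividing by $|F|$ and taking the infimum over $F$ gives $h(\Gamma,S) \geq |S|(1-\rho(\Gamma,S))$.

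For the upper bound, fix $\epsilon>0$ and choose a nonnegative finitely supported $f\in \ell^2(\Gamma)$ with $\|f\|_2 = 1$ such that $\lambda := \langle M_S f, f\rangle \geq \rho(\Gamma,S) - \epsilon$; such an $f$ exists because $M_S$ is a self-adjoint positive combination of shifts, so its numerical radius equals $\rho(\Gamma,S)$, and one can approximate by finitely supported functions. Set $G_t = \{x : f(x)^2 > t\}$, each of which is finite. The coarea identity gives $\int_0^\infty |G_t|\,dt = \|f\|_2^2 = 1$ and $\int_0^\infty |\partial_S G_t|\,dt = \sum_{\{x,y\}\in E} |f(x)^2 - f(y)^2|$, where the sum is over edges of $\Cay(\Gamma,S)$. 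Factoring $|f(x)^2 - f(y)^2| = |f(x)-f(y)|\cdot (f(x)+f(y))$ and applying Cauchy--Schwarz bounds this last sum by $\sqrt{\sum_{\{x,y\}}(f(x)-f(y))^2}\cdot \sqrt{\sum_{\{x,y\}}(f(x)+f(y))^2}$.

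Each of these edge sums is computed by symmetrising over $s\in S$: one finds $\sum_{\{x,y\}\in E}(f(x)-f(y))^2 = |S|(1-\lambda)$ and $\sum_{\{x,y\}\in E}(f(x)+f(y))^2 = |S|(1+\lambda)$. Hence $\int_0^\infty |\partial_S G_t|\,dt \leq |S|\sqrt{1-\lambda^2}$. Since $\int_0^\infty|G_t|\,dt = 1$, averaging forces the existence of some $t^*>0$ for which $|\partial_S G_{t^*}|/|G_{t^*}| \leq |S|\sqrt{1-\lambda^2}$, proving $h(\Gamma,S) \leq |S|\sqrt{1-\lambda^2}$; letting $\epsilon \to 0$ yields the desired bound. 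The main technical obstacle is ensuring the combinatorial edge counts are correct in the presence of possible loops (if $e\in S$) or doubled edges (if $s = s^{-1}$), but these minor bookkeeping issues do not affect the inequalities, since both contribute identically to the two sides.
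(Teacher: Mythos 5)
Your proof is correct and supplies an argument the paper itself does not give: the paper simply cites Mohar's Theorems~2.1(a) and 3.1(a), and offers only a one-line alternative for the lower bound (expand $\|M_S\one_F\|_2\leq\rho\|\one_F\|_2$, which is a close cousin of your Rayleigh-quotient version). For the lower bound your cancellation works out exactly: with loops present, $|S|\,|F| = |\partial_S F| + 2|E_S(F)| - |L_S(F)|$ and $\langle M_S\chi_F,\chi_F\rangle = (2|E_S(F)|-|L_S(F)|)/|S|$, so $\langle(I-M_S)\chi_F,\chi_F\rangle = |\partial_S F|/|S|$ without the need to assume the Cayley graph is loopless (this is Lemma~\ref{lem:count:E} and Remark~\ref{note:N:e} in the paper). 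For the upper bound, the coarea/Cauchy--Schwarz computation is the standard proof and goes through; loops contribute $0$ to $\sum |f(x)^2-f(y)^2|$ and to the difference sum, and only tighten the $(f(x)+f(y))^2$ bound, so they are indeed harmless.

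The one place you are slightly loose is the step ``its numerical radius equals $\rho(\Gamma,S)$, and one can approximate by finitely supported functions.'' The numerical radius of a self-adjoint operator is $\sup_{\|f\|=1}|\langle M_S f, f\rangle|$, which a priori could be realised at the bottom of the spectrum; what you actually need is $\sup_{\|f\|=1,\, f\geq 0}\langle M_S f, f\rangle = \rho(\Gamma,S)$. The fix is short and uses the positivity you allude to but don't spell out: since $M_S$ has nonnegative matrix entries, $\langle M_S|f|,|f|\rangle \geq |\langle M_S f, f\rangle|$ for every $f$, so taking absolute values and then truncating to a large finite set produces the desired nonnegative, finitely supported near-maximiser. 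With that sentence added, the argument is complete.
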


The second inequality above is a special case of Theorem~2.1(a) in~\cite{Mohar}. The first one is easier; a stronger (and more general) statement is Theorem~3.1(a) in~\cite{Mohar}. Alternatively, one can deduce it directly from expanding $\|M_S (\one_F)\|_2 \leq \rho(\Gamma,S) \| \one_F\|_2$; we make a slightly stronger computation below, see Remark~\ref{note:N:e}.

\smallskip
Recall that we defined $e(\Gamma,S)=1-h(\Gamma,S)/|S|$; with this notation, we observe:

\begin{note}\label{rem:Mohar}
The first inequality of Theorem~\ref{thm:Mohar} is equivalent to $e(\Gamma, S)\leq \rho(\Gamma,S)$ whilst the second implies $e(\Gamma, S)\geq \frac12 \rho(\Gamma,S)^2$.
\end{note}

\subsection{Littlewood functions}
We recall the definition introduced by Wysocza\'nski~\cite{Wysoczanski}, extending an idea of Varopoulos~\cite{Varopoulos}; see also~\cite{Pisier2}*{Ch.~2}.

\begin{defin}
The space of \textbf{Littlewood functions} is the space $T_1(\Gamma)$ of all functions $f\colon \Gamma \to \mathbf{C}$ that admit the following decomposition: there exist functions $f_i\colon \Gamma\times \Gamma \to \mathbf{C}$ ($i=1,2$) such that
\begin{equation}\label{eq:T1}
f(x^{-1}y)=f_1(x,y)+f_2(x,y) \kern10mm \forall x,y \in \Gamma
\end{equation}
with $\nop{f_1}\infty\infty<\infty$ and $\nop{f_2}11<\infty$. For such $f$, one defines
\begin{equation}\label{eq:T1-norm}
\|f\|_{T_1(\Gamma)}=\inf \Big\lbrace \nop{f_1}\infty\infty + \nop{f_2}11 \Big\rbrace,
\end{equation}
where the infimum runs over all decompositions~\eqref{eq:T1}.
\end{defin}

For concrete computations with~\eqref{eq:T1-norm}, it is useful to recall that we have
\begin{equation}\label{eq:T1-norm:orig}
\nop{f_1}\infty\infty = \sup_x\sum_y|f_1(x,y)| \kern3mm\text{and}\kern3mm \nop{f_2}11 =\sup_y\sum_x|f_2(x,y)|.
\end{equation}

\begin{notes}\label{rem:T1}
(i). One verifies that $\|f\|_{T_1(\Gamma)}$ is a norm and that moreover $T_1(\Gamma)$ is complete for this norm. (ii). The definition in~\cite{Wysoczanski} considers $\max(\nop{f_1}\infty\infty, \nop{f_2}11)$ instead of the sum in~\eqref{eq:T1-norm}. We caution this defines a quasi-norm only. This is not of much consequence because the two quantities differ at most by a multiplicative constant~$2$. (iii). There is a norm-one embedding $\ell^1(\Gamma) \se T_1(\Gamma)$ because for $f\in\ell^1(\Gamma)$ we can set $f_1(x,y)= f(x^{-1}y)$ and $f_2=0$.
\end{notes}

There is another equivalent norm on the space ${T_1(\Gamma)}$, namely
\begin{equation}\label{eq:N}
N(f) = \sup_{|A|=|B| }\ \frac{1}{|A|}  \sum_{a\in A, b\in B} |f(a^{-1}b)|
\end{equation}
where $A, B\se\Gamma$ range over all finite non-empty subsets of equal size. Adjusting the constants of Remark page~261 in~\cite{Wysoczanski} according to Remark~\ref{rem:T1}(ii) above leads to the following version of Varopoulos' Lemma~5.1 in~\cite{Varopoulos}.

\begin{prop}
For all $f\in T_1(\Gamma)$ we have $\frac12 N(f) \leq \|f\|_{T_1(\Gamma)} \leq 2N(f)$.\qed
\end{prop}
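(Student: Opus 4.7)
For the lower bound $\frac12 N(f)\leq \|f\|_{T_1(\Gamma)}$ I would take an arbitrary admissible decomposition $f(x^{-1}y)=f_1(x,y)+f_2(x,y)$ and, for finite $A,B\se\Gamma$ with $|A|=|B|$, bound
$$\sum_{a\in A,\,b\in B}|f(a^{-1}b)| \leq \sum_{a,b}|f_1(a,b)|+\sum_{a,b}|f_2(a,b)|.$$
Summing the first double sum over $b$ first and the second over $a$ first, and using $|A|=|B|$, the right-hand side is at most $|A|\bigl(\nop{f_1}\infty\infty+\nop{f_2}11\bigr)$. Dividing by $|A|$ and passing to the infimum over decompositions even yields $N(f)\leq \|f\|_{T_1(\Gamma)}$, which is stronger than the stated inequality.

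The substantive direction $\|f\|_{T_1(\Gamma)}\leq 2N(f)$ follows the strategy of Varopoulos. Setting $M:=N(f)$, the plan is to first construct a non-negative decomposition $|f(x^{-1}y)|=g_1(x,y)+g_2(x,y)$ with $\sup_x\sum_y g_1(x,y)\leq M$ and $\sup_y\sum_x g_2(x,y)\leq M$, and then set $f_i(x,y):=g_i(x,y)\,\sigma(x,y)$, where $\sigma(x,y)=f(x^{-1}y)/|f(x^{-1}y)|$ on the support of $f$ and $0$ elsewhere. This gives $f_1+f_2=f(x^{-1}y)$ with $\nop{f_1}\infty\infty\leq M$ and $\nop{f_2}11\leq M$, hence $\|f\|_{T_1(\Gamma)}\leq 2M$. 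To produce $(g_1,g_2)$, I would first work on a finite truncation: fix a finite $F\se\Gamma$ and set up a max-flow network with source $s$, sink $t$, an edge-node $E_{x,y}$ per pair $(x,y)\in F\times F$ receiving capacity $|f(x^{-1}y)|$ from $s$, infinite-capacity edges $E_{x,y}\to R_x$ and $E_{x,y}\to C_y$ to row- and column-nodes, and capacity-$M$ edges $R_x\to t$ and $C_y\to t$. A flow saturating every $s\to E_{x,y}$ edge is exactly the desired pair $(g_1,g_2)$ on $F\times F$ by conservation at $E_{x,y}$, $R_x$, and $C_y$.

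By max-flow/min-cut, saturation holds iff every cut has capacity at least $\sum_{(x,y)\in F\times F}|f(x^{-1}y)|$. Since $E_{x,y}\to R_x,C_y$ have infinite capacity, any finite cut places $E_{x,y}$ on the source side exactly when $R_x$ and $C_y$ both do, and with $A=\{x:R_x\in S\}$, $B=\{y:C_y\in S\}$ the cut condition simplifies to
$$\sum_{(x,y)\in A\times B}|f(x^{-1}y)| \leq M(|A|+|B|) \qquad \forall \, A,B\se F.$$
The hypothesis $N(f)\leq M$ delivers this by padding the smaller of $A,B$ inside $\Gamma$ to equal cardinality, giving $\sum_{A\times B}|f|\leq M\max(|A|,|B|)\leq M(|A|+|B|)$.

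The main obstacle is the passage from the finite truncations to all of $\Gamma\times\Gamma$: the flow argument only produces $(g_1^{(F)},g_2^{(F)})$ on $F\times F$, and one must exhaust $\Gamma$ by finite sets $F_n\nearrow\Gamma$ and extract a pointwise accumulation point from the uniformly bounded sequence $\bigl(g_1^{(F_n)},g_2^{(F_n)}\bigr)$ (e.g.\ via a Tychonoff/diagonal argument). The row- and column-sum bounds then transfer to the limit by Fatou's lemma, completing the construction of $(g_1,g_2)$ on all of $\Gamma\times\Gamma$.
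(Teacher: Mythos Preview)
Your argument is correct. Note that the paper does not actually prove this proposition: it simply cites Varopoulos' Lemma~5.1 together with Wysocza\'nski's remark on constants. Your proof is precisely a reconstruction of Varopoulos' argument, phrased via max-flow/min-cut rather than the marriage-theorem language of the original, and your compactness passage to the full group is the standard one. As you observe, the easy direction in fact yields the sharper bound $N(f)\leq\|f\|_{T_1(\Gamma)}$; the paper's weaker constant $\tfrac12$ is an artifact of translating from Wysocza\'nski's $\max$-formulation (cf.\ Remark~\ref{rem:T1}(ii)) rather than arguing directly as you do.
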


A fundamental property of the norm $N$ is the following.

\begin{prop}\label{prop:N:op}
For every $f\geq 0$ we have $N(f) \leq \nop f22$.
\end{prop}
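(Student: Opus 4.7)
The plan is to obtain the inequality by testing the bilinear form defining $\|f\|_{2\to 2}$ against the normalised indicator vectors of the pairs $(A,B)$ that appear in the supremum defining $N$.

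Concretely, recall that the paper views $f\colon \Gamma\to\CC$ as the kernel $(x,y)\mapsto f(x^{-1}y)$ on $\Gamma\times\Gamma$, so that, as an operator on $\ell^2(\Gamma)$, its norm is
\begin{equation*}
\|f\|_{2\to 2} = \sup_{\|u\|_2=\|v\|_2 =1}\ \Big|\sum_{x,y\in\Gamma} f(x^{-1}y)\,\overline{u(x)}\,v(y)\Big|.
\end{equation*}
Given finite non-empty subsets $A,B\se\Gamma$ with $|A|=|B|$, I would plug in the unit vectors $u = |A|^{-1/2}\one_A$ and $v = |B|^{-1/2}\one_B$. Since $f\geq 0$, the bilinear form evaluated on these vectors is real and non-negative, and equals
\begin{equation*}
\frac{1}{\sqrt{|A|\,|B|}} \sum_{a\in A,\,b\in B} f(a^{-1}b) \;=\; \frac{1}{|A|}\sum_{a\in A,\,b\in B} f(a^{-1}b),
\end{equation*}
where the second equality uses $|A|=|B|$. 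By definition of the operator norm, this quantity is bounded by $\|f\|_{2\to 2}$.

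Finally, since $f\geq 0$ the absolute values in the definition~\eqref{eq:N} of $N(f)$ disappear, and taking the supremum of the above estimate over all pairs $A,B$ of equal finite size yields $N(f) \leq \|f\|_{2\to 2}$, as required. There is no real obstacle here: the inequality is just the observation that $N$ is a restricted form of the operator-norm supremum, obtained by restricting the test vectors to (scaled) indicator functions of equal-size sets; the hypothesis $f\geq 0$ is used precisely to identify $N(f)$ with the corresponding non-negative bilinear expression rather than a sum of absolute values.
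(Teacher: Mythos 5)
Your proof is correct and is essentially the paper's proof: the paper also tests against the normalised indicators $\one_A$, $\one_B$, writing the double sum as $\langle \one_A * f, \one_B\rangle$ and invoking the fact that $f$ acting as a right convolutor (equivalently, with kernel $(x,y)\mapsto f(x^{-1}y)$) has the same $\ell^2\to\ell^2$ norm as the left convolutor $f$. The only small inaccuracy is your recollection that the paper "views $f$ as the kernel $f(x^{-1}y)$" — the paper's convention for $\nop f 2 2$ on $\CC[\Gamma]$ is left convolution, whose kernel is $f(xy^{-1})$ — but since the two norms agree this does not affect the argument.
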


\begin{proof}
We do not change $\nop f22$ if we consider $f$ as a \emph{right} convolutor on $\ell^2(\Gamma)$. For $A,B$ as in~\eqref{eq:N} we have
$$\sum_{a\in A, b\in B} f(a^{-1}b) = \langle \one_A * f, \one_B \rangle \leq  \|\one_A\|_2 \cdot \|\one_B\|_2 \cdot \nop f22.$$
On the other hand, $\|\one_A\|_2 = \|\one_B\|_2= |A|^{1/2}$; the estimate follows.
\end{proof}

We shall use throughout the following principle.

\begin{lemma}\label{lem:closed:graph}
Let $\|\cdot\|$ one of the equivalent norms on $T_1(\Gamma)$ and let $1\leq p < \infty$.

Then  $T_1(\Gamma) \nsubseteq \ell^p(\Gamma)$ if and only if there is a sequence $(f_n)$ of non-zero elements of $\CC[\Gamma]$ with $\|f_n\|  / \|f_n\|_p \to 0$ as $n\to\infty$. Moreover, we can assume $f_n\geq 0$.
\end{lemma}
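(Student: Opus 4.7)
The plan is essentially a closed graph theorem argument combined with a truncation trick. Both $T_1(\Gamma)$ and $\ell^p(\Gamma)$ are Banach spaces (see Remark~\ref{rem:T1}(i) for the former), and both norms dominate the sup norm: for $\ell^p$ with counting measure this is immediate, and for $\|\cdot\|_{T_1}$ one specialises any decomposition $f(x^{-1}y) = f_1(x,y) + f_2(x,y)$ to obtain $|f(g)| \leq \|f_1\|_{\infty\infty} + \|f_2\|_{11}$. Hence norm convergence in either space implies pointwise convergence on $\Gamma$, so whenever $T_1(\Gamma) \subseteq \ell^p(\Gamma)$ holds as sets the graph of the inclusion is automatically closed. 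I would also record once and for all the monotonicity $\|f \cdot \one_F\| \leq \|f\|$ for every finite $F \subseteq \Gamma$: via the $N$-norm of~\eqref{eq:N} this is transparent because the definition already involves $|f|$, and for $\|\cdot\|_{T_1}$ itself it follows by multiplying any decomposition functions $f_i(x,y)$ by $\one_F(x^{-1}y)$.

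For the ``if'' direction, suppose a sequence $(f_n)$ as in the statement exists. If we had $T_1(\Gamma) \subseteq \ell^p(\Gamma)$, the closed graph theorem applied to the inclusion between these Banach spaces would yield a constant $C$ with $\|f\|_p \leq C \|f\|$ for all $f \in T_1(\Gamma)$; specialising to $f = f_n$ would contradict $\|f_n\|/\|f_n\|_p \to 0$. Hence $T_1(\Gamma) \nsubseteq \ell^p(\Gamma)$.

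For the ``only if'' direction I would argue by contrapositive: assume there is $C > 0$ with $\|f\|_p \leq C \|f\|$ for every $f \in \CC[\Gamma]$, and conclude $T_1(\Gamma) \subseteq \ell^p(\Gamma)$. Given $f \in T_1(\Gamma)$ and any finite $F \subseteq \Gamma$, the truncation $f \cdot \one_F$ lies in $\CC[\Gamma]$, hence $\|f \one_F\|_p \leq C \|f \one_F\| \leq C \|f\|$ by the monotonicity recorded above. Letting $F$ exhaust $\Gamma$, monotone convergence gives $\|f\|_p \leq C \|f\| < \infty$, so $f \in \ell^p(\Gamma)$.

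Finally, to arrange $f_n \geq 0$, replace $f_n$ by $|f_n|$: the $\ell^p$-norm is unchanged, and the identity $N(|f|) = N(f)$ (evident from the definition of $N$) together with the equivalence of $N$ and $\|\cdot\|$ implies that the new ratio differs from the old by at most a bounded multiplicative constant, so it still tends to zero. Everything beyond this reduces to straightforward verifications from the definitions; I do not anticipate a genuine obstacle, the argument being a standard closed-graph-plus-truncation exercise once one notices that both norms control pointwise values.
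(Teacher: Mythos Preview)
Your proof is correct and follows essentially the same approach as the paper: the closed graph theorem for one implication and the truncation monotonicity $N(f\one_F)\leq N(f)$ for the other. The only difference is organizational---you argue the ``only if'' direction by contrapositive (extending the bound from $\CC[\Gamma]$ to $T_1(\Gamma)$ via truncation), whereas the paper proceeds directly by truncating a single $f\in T_1(\Gamma)\setminus\ell^p(\Gamma)$ to produce the sequence; these are the same idea read in opposite directions.
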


\begin{proof}
Suppose that $T_1(\Gamma) \se \ell^p(\Gamma)$ and observe that the inclusion map is continuous in the topology of pointwise convergence. This implies that it has a closed graph also in the product of the norm topologies. Therefore the closed graph theorem ensures that it is norm-continuous and hence no sequence $(f_n)$ as above can exist.

Conversely, suppose $T_1(\Gamma) \nsubseteq \ell^p(\Gamma)$. Since both $N(f)$ and $\|f\|_p$ depend only on $|f|$, there is $f\geq 0$ in $T_1(\Gamma)$ with $\|f\|_p=\infty$. This implies that there is an increasing sequence of finite subsets $F_n\se \Gamma$ with $\|f \one_{F_n}\|_p \to \infty$. However, $N(f \one_{F_n}) \leq N(f)$; therefore, $f \one_{F_n}$ is a sequence with the desired properties.
\end{proof}

\subsection{Limes inferior}
We should clarify our use of liminf in the definition of $\ibet(\Gamma)$ and of $r(\Gamma)$. Since the collection of finite symmetric subsets $S$ of $\Gamma$ is directed under the inclusion order, any real function $f$ on this collection has the usual (possibly infinite) \textbf{limes inferior}
$$\liminf_{S} f(S) := \lim_{S} \inf_{S' \supseteq S} f(S') = \sup_{S} \inf_{S' \supseteq S} f(S').$$
However, in the proofs, we shall use the more ad hoc ``size-wise $\liminf$'' defined by
$$\lim_{n\to\infty } \inf_{|S|\geq n} f(S) = \sup_{n\to\infty } \inf_{|S|\geq n} f(S).$$
The latter is always bounded above by $\liminf_{S} f(S)$ but this inequality can be strict in general. For the particular functions $f$ that we consider in this article, however, we will always have
$$\liminf_{S} f(S) = \lim_{n\to\infty } \inf_{|S|\geq n} f(S).$$
Indeed, observe first that both quantities can be realised respectively as $\lim_{n\to\infty} f(S'_n)$ and $\lim_{n\to\infty} f(S_n)$ for suitable (but unrelated) sequences of finite sets $S'_n, S_n$ with $|S_n|\to\infty$. In order to have the desired equality, it suffices to establish that, given the sequence $(S'_n)$, one can choose the sequence $(S_n)$ with the additional property that for each $n$ there is $m$ with $S_k\supseteq S'_n$ for all $k\geq m$. To this end, it suffices by a diagonal argument to show that given any $\gamma\in\Gamma$, we can replace the sequence $S_n$ by $S_n\cup \{\gamma, \gamma^{-1}\}$.

Since we are in the special case where $f(S)$ is $-\ln e(\Gamma, S)/ \ln|S|$ or $-\ln\rho(\Gamma, S)/ \ln|S|$, this statement follows from straightforward estimates for $e(\Gamma, S)$ and for $\rho(\Gamma, S)$ together with the fact that $|S|$ goes to infinity.

\section{The Littlewood exponent}
\subsection{Finiteness and amenability}
As noted in Remark~\ref{rem:T1}(iii), we have $\ell^1(\Gamma) \se T_1(\Gamma)$ for every group $\Gamma$. One the other hand, if $0<p<1$, then $\ell^p(\Gamma)$ is a proper subspace of $\ell^1(\Gamma)$ unless $\Gamma$ is finite. We can thus record the following.

\begin{lemma}
If $\Gamma$ is infinite, then $\Lit(\Gamma)\geq 1$. If $\Gamma$ is finite, then $\Lit(\Gamma)=0$.\qed
\end{lemma}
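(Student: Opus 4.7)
The plan is to treat the two cases separately, both reducing to elementary observations about $\ell^p$-spaces on discrete sets.

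For the finite case, I would simply note that when $\Gamma$ is finite the space $\ell^p(\Gamma)$ coincides with $\CC^\Gamma$ as a set for every $p>0$, so the inclusion $T_1(\Gamma)\subseteq \ell^p(\Gamma)$ holds trivially for every $p>0$. By definition of the infimum, $\Lit(\Gamma)=0$.

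For the infinite case, the strategy is to invoke the norm-one embedding $\ell^1(\Gamma)\subseteq T_1(\Gamma)$ recorded in Remark~\ref{rem:T1}(iii), which reduces matters to exhibiting, for each $p<1$, an element of $\ell^1(\Gamma)$ that fails to belong to $\ell^p(\Gamma)$. Fix $p\in[0,1)$ (one may take $p>0$, as $\ell^0$ is not at stake in the infimum). Using that $\Gamma$ is infinite, I would pick an injective sequence $(g_n)_{n\geq 1}$ in $\Gamma$ and set $f(g_n):=n^{-1/p}$, with $f=0$ elsewhere. Since $1/p>1$, the function $f$ lies in $\ell^1(\Gamma)\subseteq T_1(\Gamma)$, yet $\|f\|_p^p=\sum_n n^{-1}=\infty$, so $f\notin \ell^p(\Gamma)$. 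Hence $p$ is not an admissible value in the infimum defining $\Lit(\Gamma)$, and letting $p\nearrow 1$ gives $\Lit(\Gamma)\geq 1$.

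There is no real obstacle to overcome: the argument is a direct combination of the embedding $\ell^1\hookrightarrow T_1$ already supplied by the authors with the standard strict nesting of $\ell^p$-spaces on countably infinite discrete sets. The only minor care is in choosing a witness $f$ whose $\ell^p$-norm diverges for the given $p<1$, which the single-parameter family $f(g_n)=n^{-1/p}$ handles uniformly.
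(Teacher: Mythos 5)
Your proof is correct and follows essentially the same route as the paper: the norm-one embedding $\ell^1(\Gamma)\subseteq T_1(\Gamma)$ from Remark~2.4(iii) together with the strict inclusion $\ell^p(\Gamma)\subsetneq\ell^1(\Gamma)$ for infinite $\Gamma$ and $0<p<1$. You merely make explicit the standard witness $f(g_n)=n^{-1/p}$ that the paper leaves implicit, and spell out the (trivial) finite case; there is no substantive difference.
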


On the other hand, Wysocza\'nski has characterised when the inclusion $\ell^1(\Gamma) \se T_1(\Gamma)$ is proper.

\begin{theorem}[Wysocza\'nski~\cite{Wysoczanski}*{Thm.~1}]\label{wys}
The group $\Gamma$ is amenable if and only if $\ell^1(\Gamma)=T_1(\Gamma)$.\qed
\end{theorem}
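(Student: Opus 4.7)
The inclusion $\ell^1(\Gamma)\se T_1(\Gamma)$ is always available by Remark~\ref{rem:T1}(iii), so the task reduces to showing that the reverse inclusion $T_1(\Gamma)\se \ell^1(\Gamma)$ characterises amenability. I plan to prove the two implications separately, exploiting the natural pairing between elements of $T_1(\Gamma)$ and finite subsets $F\se\Gamma$ through the identity
$$\sum_{x,y\in F} |f(x^{-1}y)| = \sum_{g\in\Gamma} |f(g)|\cdot |F\cap Fg^{-1}|.$$

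For the forward direction (amenable $\Rightarrow$ $T_1\se\ell^1$), let $f\in T_1(\Gamma)$ with a decomposition witnessing~\eqref{eq:T1}. The right-hand side splits as a sum over $f_1$ and $f_2$ and is therefore bounded by $|F|\cdot\|f\|_{T_1(\Gamma)}$, using precisely the two finiteness conditions~\eqref{eq:T1-norm:orig}. On the other hand, given any finite $E\se\Gamma$, the right Følner condition, available since $\Gamma$ is amenable, produces $F$ with $|F\cap Fg^{-1}|\geq |F|/2$ for every $g\in E$. Combining the two estimates yields $\sum_{g\in E} |f(g)|\leq 2\|f\|_{T_1(\Gamma)}$, and letting $E$ exhaust $\Gamma$ gives $f\in\ell^1(\Gamma)$.

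For the converse, I would argue by contrapositive and use Lemma~\ref{lem:closed:graph}: it suffices to construct non-negative $f_n\in\CC[\Gamma]$ with $N(f_n)/\|f_n\|_1\to 0$. Starting from the non-amenability of $\Gamma$, I would pass (if needed) to a finitely generated non-amenable subgroup $H\leq\Gamma$ with finite symmetric generating set $S$; the Kesten--Hulanicki theorem then provides $\rho:=\rho(H,S)<1$, and a standard direct-sum-over-cosets argument gives $\rho(\Gamma,S)=\rho<1$ as well. Setting $f_n := \one_S^{*n}$, one has $f_n\geq 0$, $\|f_n\|_1 = |S|^n$, and, since $\one_S$ is self-adjoint as a convolution operator on $\ell^2(\Gamma)$, $\|f_n\|_{2\to 2} = (|S|\rho)^n$. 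Proposition~\ref{prop:N:op} then gives $N(f_n)\leq (|S|\rho)^n$, whence $N(f_n)/\|f_n\|_1\leq \rho^n\to 0$.

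The genuinely non-elementary ingredient on which this plan leans is the Kesten--Hulanicki characterisation used to extract a symmetric $S$ with $\rho(\Gamma,S)<1$ from non-amenability; everything else is a direct manipulation of the identities and inequalities already assembled in the preliminaries. A secondary bookkeeping point that needs care is distinguishing left and right Følner conditions in the forward direction, since the natural sum that arises involves the right-translates $Fg^{-1}$ rather than $gF$; for discrete groups both versions of Følner are available from amenability, so this poses no real difficulty.
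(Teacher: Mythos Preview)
The paper does not prove this theorem: it is quoted from Wysocza\'nski's article and closed with a bare \qed. Your proposal therefore supplies an argument where the paper offers none, and the argument is correct in both directions.

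A few remarks on how your proof sits relative to the rest of the paper. Your forward direction is exactly the inequality $N'(f)\leq \|f\|_{T_1(\Gamma)}$ (implicit in Lemma~\ref{lem:N'} and the definition of $N'$) combined with the F{\o}lner condition; this is the standard route and matches Wysocza\'nski's original. Your converse is in the same spirit as the paper's proof of the strictly stronger Theorem~\ref{beginning}: there the authors also feed $\nop{\cdot}22$ into Proposition~\ref{prop:N:op} and invoke Lemma~\ref{lem:closed:graph}, but they need the quantitative Theorem~\ref{spectralradius} to push $\Lit(\Gamma)$ strictly above~$1$, whereas for the qualitative statement $T_1(\Gamma)\nsubseteq\ell^1(\Gamma)$ you correctly observe that the bare Kesten--Hulanicki spectral gap suffices. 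The passage to a finitely generated non-amenable subgroup and the coset decomposition for $\rho(\Gamma,S)=\rho(H,S)$ are the right way to handle the case of a non-finitely-generated $\Gamma$. The left/right F{\o}lner bookkeeping you flag is harmless, as you note.
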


In particular, $\Lit(\Gamma)\leq 1$ for amenable groups. We shall establish the converse; this requires quantitative estimates for the $T_1$-norm which we shall prove using the following result of one of the authors~\cite{Thom:rad}.

\begin{theorem}[Cor.~6 in~\cite{Thom:rad}]\label{spectralradius}
For every non-amenable group $\Gamma$ there is $\epsilon>0$ such that there exist arbitrarily large finite symmetric subsets $S$ with $\rho(\Gamma, S)<|S|^{-\epsilon}$.\qed
\end{theorem}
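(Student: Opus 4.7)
The plan is to combine Kesten's characterisation of amenability with an elementary convolution-power argument. By Kesten's theorem, non-amenability of $\Gamma$ yields a finite symmetric subset $S_0 \subseteq \Gamma$ (generating a non-amenable subgroup) with $\rho_0 := \rho(\Gamma, S_0) < 1$. Set $\mu_0 := |S_0|^{-1} \one_{S_0} \in \CC[\Gamma]$, and take as candidate large sets the $n$-fold products $S_n := S_0^n$, whose cardinalities tend to infinity because $\langle S_0 \rangle$ is infinite.

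I would organise the argument in three steps. Firstly, upper-bound the spectral radius: since $\mu_0$ is symmetric, $\|\mu_0^{*n}\|_{2\to 2} = \rho_0^n$, and since every $g \in S_n$ admits at least one factorisation as a product of $n$ elements of $S_0$, one has $\mu_0^{*n}(g) \geq |S_0|^{-n}$ on its support, whence the pointwise domination
$$M_{S_n} \;\leq\; \frac{|S_0|^n}{|S_n|}\,\mu_0^{*n}.$$
Passing to the $2\to 2$ norm yields $\rho(\Gamma, S_n) \leq |S_0|^n \rho_0^n / |S_n|$. Secondly, lower-bound $|S_n|$: since $\mu_0^{*n}$ is a probability measure supported on $S_n$, Cauchy--Schwarz gives $1 = \|\mu_0^{*n}\|_1 \leq |S_n|^{1/2}\, \|\mu_0^{*n}\|_2$, and testing the operator norm against $\delta_e$ gives $\|\mu_0^{*n}\|_2 \leq \rho_0^n$; hence $|S_n| \geq \rho_0^{-2n}$. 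Thirdly, combine: for any $\epsilon \in (0,1)$,
$$\rho(\Gamma, S_n)\cdot |S_n|^{\epsilon} \;\leq\; |S_0|^n \rho_0^n \, |S_n|^{\epsilon - 1} \;\leq\; |S_0|^n \rho_0^{\,n}\, \rho_0^{\,2n(1-\epsilon)} \;=\; \bigl(|S_0|\, \rho_0^{\,3-2\epsilon}\bigr)^n,$$
which is at most $1$ as soon as $\log|S_0| \leq (3-2\epsilon)\log(1/\rho_0)$.

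The main obstacle is precisely this last condition: to extract a positive $\epsilon$ one needs $|S_0|\rho_0^3 < 1$, and this is \emph{not} automatic from mere non-amenability and an arbitrary witnessing $S_0$ (it already fails, for instance, for the standard generating set of the free group $F_2$). The genuine content to be drawn from~\cite{Thom:rad} is a device guaranteeing such a set $S_0$ inside any non-amenable group---for instance by passing to a well-chosen finitely generated subgroup and replacing $S_0$ by a sufficiently large iterated power $S_0^k$ to which the comparison above is applied, or by invoking the Grigorchuk--Cohen cogrowth inequality to force a compatible relationship between $|S_0|$ and $\rho_0$. The output is then a uniform $\epsilon > 0$ depending only on $\Gamma$ and an infinite sequence of symmetric sets $S$ witnessing $\rho(\Gamma, S) < |S|^{-\epsilon}$, as required.
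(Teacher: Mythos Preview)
The paper does not prove this statement at all: it is quoted verbatim as Corollary~6 of~\cite{Thom:rad} and marked \qed. So there is no ``paper's proof'' to compare against, only the argument in~\cite{Thom:rad}.

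Your sketch is in the right spirit---convolution powers of a witnessing Markov operator, pointwise domination to bound $\rho(\Gamma,S_n)$, and Cauchy--Schwarz to lower-bound $|S_n|$---but the gap you yourself flag is genuine and your proposed remedies do not close it. Replacing $S_0$ by $S_0^k$ simply re-indexes the same sequence $S_n=S_0^n$, so it cannot improve the asymptotics. Seeking a better initial set $S_0$ inside some subgroup amounts to asking that $-\ln\rho(\Gamma,S_0)/\ln|S_0|>1/3$ for some $S_0$; in the notation of the present paper this is $r(\Gamma)>1/3$, which is not known for arbitrary non-amenable groups and is in any case a strengthening of the very statement you are trying to prove (namely $r(\Gamma)>0$). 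The cogrowth formula gives information about one fixed presentation, not a mechanism for producing new sets with better ratio.

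What your argument is missing is the level-set extraction that is the actual content of~\cite{Thom:rad}: rather than taking $S_n=\mathrm{supp}(\mu_0^{*n})$, one decomposes $\mu_0^{*n}$ into $O(n)$ dyadic level sets and selects, by pigeonhole, a level set $\Sigma$ on which $\mu_0^{*n}$ is essentially constant. On such a $\Sigma$ the pointwise-domination bound becomes sharp up to a factor of~$2$, and one recovers $\rho(\Gamma,\Sigma)\leq|\Sigma|^{-\epsilon}$ for any $\epsilon<-\ln\rho_0/\ln|S_0|$, with no extraneous threshold. This is exactly the mechanism behind Theorem~\ref{thm:technical} later in the paper, and it is the same idea as the box trick of Lemma~\ref{lemma:box}.
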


We can now prove the characterisation of amenability stated in Theorem~\ref{beginning}.

\begin{proof}[Proof of Theorem~\ref{beginning}]
Let $\epsilon>0$ be as provided by Theorem~\ref{spectralradius}. Then any $p$ with $1<p<1/(1-\epsilon)$ will do. Suppose indeed for a contradiction that we have $T_1(\Gamma) \se \ell ^p(\Gamma)$. In view of Proposition~\ref{prop:N:op} and Lemma~\ref{lem:closed:graph}, there is a constant $c$ such that
$$|S|^{\frac1p} = \|\one_S\|_p \leq c N(\one_S) \leq c \nop{\one_S}22 = c |S| \rho(\Gamma, S)$$
holds for every finite symmetric subset $S\se\Gamma$. Applying Theorem~\ref{spectralradius}, we obtain that $|S|^{\frac1p}$ is bounded by $c |S|^{1-\epsilon}$ for arbitrarily large sets $S$, which implies $1/p\leq 1-\epsilon$, a contradiction.
\end{proof}

\subsection{Subgroups, quotients and free groups}
In view of Theorem~\ref{beginning}, a value $\Lit(\Gamma)>1$ is one of the ways to measure non-amenability quantitatively. Therefore, the stability properties of $\Lit(\Gamma)$ are of interest.

\begin{prop}\label{aforsubgroups}
If $\Lambda$ is a subgroup of $\Gamma$, then $\Lit(\Lambda) \leq \Lit(\Gamma)$.
\end{prop}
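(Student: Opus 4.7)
The plan is to show directly that, for every $p$, the inclusion $T_1(\Gamma)\subseteq \ell^p(\Gamma)$ forces $T_1(\Lambda)\subseteq \ell^p(\Lambda)$; taking infima then yields the claimed inequality. The mechanism is a bounded extension map $T_1(\Lambda)\to T_1(\Gamma)$ that simply extends a function by zero off of $\Lambda$.

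Concretely, fix a system of left coset representatives so that $\Gamma=\bigsqcup_{i\in I} s_i \Lambda$, and write every element uniquely as $s_i\lambda$ with $\lambda\in\Lambda$. Given $f\in T_1(\Lambda)$ with a decomposition $f(\lambda^{-1}\mu)=f_1(\lambda,\mu)+f_2(\lambda,\mu)$ for $\lambda,\mu\in\Lambda$, let $\bar f\colon \Gamma\to\CC$ agree with $f$ on $\Lambda$ and vanish elsewhere. For $x=s_i\lambda$ and $y=s_j\mu$ one has $x^{-1}y=\lambda^{-1}s_i^{-1}s_j\mu$, which lies in $\Lambda$ iff $i=j$, in which case it equals $\lambda^{-1}\mu$. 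Accordingly set
$$\bar f_k(s_i\lambda,s_j\mu)=\begin{cases}f_k(\lambda,\mu)&\text{if }i=j,\\ 0&\text{otherwise,}\end{cases}$$
for $k=1,2$. Then $\bar f(x^{-1}y)=\bar f_1(x,y)+\bar f_2(x,y)$ for all $x,y\in\Gamma$.

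Next I would check that this decomposition realises the $T_1(\Gamma)$-norm well. In the formula $\nop{\bar f_1}\infty\infty=\sup_x\sum_y|\bar f_1(x,y)|$, for a fixed $x=s_i\lambda$ only $y\in s_i\Lambda$ contributes; the sum becomes $\sum_\mu|f_1(\lambda,\mu)|$, and the supremum over $x$ equals the supremum over $\lambda\in\Lambda$. The same reasoning applies to $\nop{\bar f_2}11$ using~\eqref{eq:T1-norm:orig}. Hence $\|\bar f\|_{T_1(\Gamma)}\leq \|f\|_{T_1(\Lambda)}$, and in particular $\bar f\in T_1(\Gamma)$.

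Now assume $T_1(\Gamma)\subseteq \ell^p(\Gamma)$ and take any $f\in T_1(\Lambda)$. Then $\bar f\in \ell^p(\Gamma)$, and since $\bar f$ vanishes outside $\Lambda$,
$$\|f\|_{\ell^p(\Lambda)}=\|\bar f\|_{\ell^p(\Gamma)}<\infty,$$
so $f\in \ell^p(\Lambda)$. Thus $T_1(\Lambda)\subseteq \ell^p(\Lambda)$, and taking the infimum over such $p$ yields $\Lit(\Lambda)\leq \Lit(\Gamma)$. I do not anticipate a genuine obstacle: the argument is essentially bookkeeping around the coset decomposition, and the mild point to be careful about is verifying that both suprema defining the $T_1$-norm genuinely reduce to the suprema over $\Lambda\times\Lambda$ rather than introducing spurious contributions from other cosets.
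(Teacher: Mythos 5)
Your proof is correct and is essentially the paper's own argument (the paper extends by zero off $\Lambda$ and notes that the two witness functions $f_i$ should be extended by restricting to the case where $x$ and $y$ lie in the same coset of $\Lambda$, citing Pisier's Lemma~2.7); you have simply carried out that coset bookkeeping explicitly rather than invoking the reference.
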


\begin{proof}
Since for all $p$ the space $\ell^p(\Lambda)$ coincides with the subspace of elements of $\ell^p(\Gamma)$ supported on $\Lambda$, it suffices to justify that every $f\in T_1(\Lambda)$ yields an element of $T_1(\Gamma)$ after extending it by zero outside $\Lambda$. This is straightforward if we use the norm $N$. If we use the $T_1$-norm, we just need to extend the two functions $f_i(x,y)$ by restricting to the case where $x^{-1}$ and $y$ belong to the same coset $\Lambda$ in $\Gamma$ (as done in the proof of Lemma~2.7 of~\cite{Pisier2}).
\end{proof}

\begin{prop}\label{aforquotients}
If $\Lambda$ is a quotient group of $\Gamma$, then $\Lit(\Lambda) \leq \Lit(\Gamma)$.
\end{prop}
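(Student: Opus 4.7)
The plan is to mimic the subgroup case (Proposition~\ref{aforsubgroups}) in reverse: rather than extending a function by zero on a subgroup, I will lift a function from $\Lambda$ up to $\Gamma$ using a set-theoretic section of the quotient map. Let $\pi\colon \Gamma\to\Lambda$ be the surjection and choose any section $s\colon\Lambda\to\Gamma$ of $\pi$. Given $f\in T_1(\Lambda)$, define $g\colon\Gamma\to\CC$ by
$$g(x)=\begin{cases} f(\pi(x)) & \text{if } x\in s(\Lambda),\\ 0 & \text{otherwise.}\end{cases}$$
Since $s$ is injective, $\|g\|_p=\|f\|_p$ for every $p\in[1,\infty)$.

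The main step is then to verify $\|g\|_{T_1(\Gamma)}\leq \|f\|_{T_1(\Lambda)}$. Pick a decomposition $f(\alpha^{-1}\beta)=f_1(\alpha,\beta)+f_2(\alpha,\beta)$ nearly realising the $T_1$-norm, and define
$$\tilde f_i(x,y)=\begin{cases} f_i(\pi(x),\pi(y)) & \text{if } x^{-1}y\in s(\Lambda),\\ 0 & \text{otherwise,}\end{cases} \qquad i=1,2.$$
A direct check gives $\tilde f_1(x,y)+\tilde f_2(x,y)=g(x^{-1}y)$. For the row-sum norm, observe that for fixed $x\in\Gamma$, the set $\{y:x^{-1}y\in s(\Lambda)\}=x\cdot s(\Lambda)$ is parametrised by $\Lambda$ via $\mu\mapsto xs(\mu)$, with $\pi(xs(\mu))=\pi(x)\mu$; hence
$$\sum_y|\tilde f_1(x,y)|=\sum_{\mu\in\Lambda}|f_1(\pi(x),\pi(x)\mu)|=\sum_{\beta\in\Lambda}|f_1(\pi(x),\beta)|\leq\nop{f_1}\infty\infty.$$
The symmetric computation (for fixed $y$, parametrise $x\in ys(\Lambda)^{-1}$) yields $\sup_y\sum_x|\tilde f_2(x,y)|\leq\nop{f_2}11$. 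Taking the infimum over decompositions gives the desired bound.

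Combining the two estimates: if $T_1(\Gamma)\subseteq\ell^p(\Gamma)$, then each $f\in T_1(\Lambda)$ produces $g\in T_1(\Gamma)\subseteq\ell^p(\Gamma)$ with $\|g\|_p=\|f\|_p$, so $f\in\ell^p(\Lambda)$; hence $T_1(\Lambda)\subseteq\ell^p(\Lambda)$. Infimising over $p$ yields $\Lit(\Lambda)\leq\Lit(\Gamma)$. I do not expect a real obstacle here; the only mildly technical point is checking that the row/column summations over $\Gamma$ collapse to sums over $\Lambda$ via the section, but this is a clean bijection argument. A variant via the norm $N$ from~\eqref{eq:N} and Lemma~\ref{lem:closed:graph} would also work, but the $T_1$-decomposition approach seems more transparent and makes the norm inequality manifest.
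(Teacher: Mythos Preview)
Your proof is correct and is essentially identical to the paper's own argument: the paper also lifts $f$ to $\Gamma$ via a set of representatives $R\subseteq\Gamma$ (which is exactly your $s(\Lambda)$), defines the same $\tilde f_i$, and verifies the row- and column-sum bounds by observing that $xR$ and $yR^{-1}$ are again sets of representatives for $\pi$. Your exposition is in fact slightly more explicit in writing out the bijection $\mu\mapsto xs(\mu)$, but there is no substantive difference.
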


\begin{proof}
Given $f\in T_1(\Lambda)$ and $p>\Lit(\Gamma)$, we shall produce an element $\widetilde f\in T_1(\Gamma)$ with $\|\widetilde f\|_p= \| f\|_p$; this implies the statement. Let $R\se \Gamma$ be a set of representatives for the quotient map $\pi\colon\Gamma\to\Lambda$. We define $\widetilde f  (x) = f(\pi(x))$ if $x\in R$ and $\widetilde f  (x) =0$ otherwise. The relation $\|\widetilde f\|_p= \| f\|_p$ holds and we need to prove that $\widetilde f$ belongs to $T_1(\Gamma)$.

Define thus $\widetilde f_i(x,y)= f_i(\pi(x),\pi(y))$ if $x^{-1}y \in R$ and~$0$ otherwise, for $i=1,2$. Then $\widetilde f_i$ provides a decomposition as in~\eqref{eq:T1} for $\widetilde f$. We now check the finiteness of $\nop{\widetilde f_1}\infty\infty$ using the formula~\eqref{eq:T1-norm:orig}:
$$\sup_{x\in\Gamma}\sum_{y\in\Gamma}|\widetilde f_1(x,y)| = \sup_{x\in\Gamma}\sum_{y\in x R}|f_1(\pi (x),\pi(y))|.$$
The right hand side is exactly $\nop{f_1}\infty\infty$ because each $xR$ is itself a set of representatives for $\pi$. The computation for $\nop{\widetilde f_2}11$ is similar but uses the fact that  $yR^{-1}$ is a set of representatives.
\end{proof}

\begin{prop}
If $\Lambda$ is a finite index subgroup of $\Gamma$, then $\Lit(\Lambda) = \Lit(\Gamma)$.
\end{prop}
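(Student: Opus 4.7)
My plan is to combine Proposition \ref{aforsubgroups}, which already gives $\Lit(\Lambda)\le \Lit(\Gamma)$, with the reverse inequality $\Lit(\Gamma)\le \Lit(\Lambda)$. For the latter, fix $p>\Lit(\Lambda)$; I aim to show $T_1(\Gamma)\se \ell^p(\Gamma)$, which will imply $\Lit(\Gamma)\le p$ and hence (letting $p\downarrow \Lit(\Lambda)$) the desired inequality.

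First I apply Lemma \ref{lem:closed:graph} together with Proposition \ref{prop:N:op} to $\Lambda$. Since $p>\Lit(\Lambda)$, there is a constant $c$ such that $\|h\|_p\le cN_\Lambda(h)$ for every $h\ge 0$ in $\CC[\Lambda]$, where $N_\Lambda$ denotes the norm $N$ computed inside $\Lambda$.

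Next, I pick a transversal $\Gamma=\bigsqcup_{i=1}^{n}g_i\Lambda$ with $n=[\Gamma:\Lambda]<\infty$, and for $f\ge 0$ in $\CC[\Gamma]$ I define $f_i\in\CC[\Lambda]$ by $f_i(\lambda)=f(g_i\lambda)$. The disjoint coset decomposition gives $\|f\|_p^p=\sum_{i=1}^n \|f_i\|_p^p$. The key estimate is
$$N_\Lambda(f_i)\le N_\Gamma(f) \qquad \text{for each } i.$$
Indeed, given finite $A,B\se \Lambda$ with $|A|=|B|$, set $U=Ag_i^{-1}\se\Gamma$ and $V=B\se\Gamma$; then $|U|=|V|=|A|$, and for $a\in A$, $b\in B$ one has $f_i(a^{-1}b)=f(g_ia^{-1}b)=f((ag_i^{-1})^{-1}b)$. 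Hence the sum defining $N_\Lambda(f_i)$ at $(A,B)$ is literally a sum admissible for $N_\Gamma(f)$ at $(U,V)$, yielding the bound.

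Combining the two steps, $\|f_i\|_p\le cN_\Lambda(f_i)\le cN_\Gamma(f)$, so $\|f\|_p\le n^{1/p}c\,N_\Gamma(f)$ for every non-negative $f\in\CC[\Gamma]$. Lemma \ref{lem:closed:graph} then forces $T_1(\Gamma)\se\ell^p(\Gamma)$, completing the argument. There is no real obstacle here: the proof works because $N$ depends only on cardinalities of the indexing sets and on group products of their elements, both of which are preserved under the right translation $a\mapsto ag_i^{-1}$ that embeds an $N_\Lambda$-configuration into an $N_\Gamma$-configuration.
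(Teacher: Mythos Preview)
Your argument is correct and follows essentially the same coset-decomposition strategy as the paper: split $f$ into its restrictions to cosets, bound the $T_1$-type norm of each piece by that of $f$, and sum. The only difference is that the paper works with the $T_1$-norm and verifies $\|f^r\|_{T_1(\Lambda)}\le\|f\|_{T_1(\Gamma)}$ via the decomposition $f_i^r(x,y)=f_i(x,yr)$, whereas you use the equivalent norm $N$ and a clean translation $a\mapsto ag_i^{-1}$; both are equally direct. One small remark: your invocation of Proposition~\ref{prop:N:op} is superfluous---the constant $c$ with $\|h\|_p\le cN_\Lambda(h)$ comes straight from the closed-graph argument in the proof of Lemma~\ref{lem:closed:graph} (continuity of the inclusion $T_1(\Lambda)\hookrightarrow\ell^p(\Lambda)$), not from the bound $N(f)\le\nop f22$.
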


\begin{proof}
In view of Proposition~\ref{aforsubgroups}, what we need to prove is that for any $p>\Lit(\Lambda)$ and any $f\in T_1(\Gamma)$, the norm $\|f\|_p$ is finite. Let $R\se \Gamma$ be a set of representatives of $\Lambda$-cosets and define $f^r\colon\Lambda\to\CC$ by $f^r(x) = f(xr)$ for each $r\in R$. If $f_1, f_2$ provide a decomposition as in~\eqref{eq:T1}, we define $f^r_i\colon \Lambda\times\Lambda\to\CC$ by $f^r_i(x,y)=f_i(x, y r)$. This witnesses that $\| f^r\|_{T_1(\Lambda)} \leq \| f\|_{T_1(\Gamma)} $ and hence each $f^r$ is in $\ell^p(\Lambda)$. Viewing $f$ as a sum of translates of the various $f^r$ to the corresponding cosets $\Lambda r\se\Gamma$, we conclude that $f\in\ell^p(\Gamma)$ since $R$ is finite.
\end{proof}

\begin{theorem}\label{thm:Lit:free}
If $\Gamma$ contains a non-abelian free subgroup, then $\Lit(\Gamma)=\infty$.
\end{theorem}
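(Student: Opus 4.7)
The plan is to reduce the statement to the case $\Gamma=F_2$, a non-abelian free group of rank two. By Proposition~\ref{aforsubgroups}, $\Lit$ is monotone under subgroups, and any non-abelian free subgroup of $\Gamma$ contains a copy of $F_2$, so $\Lit(\Gamma)\geq\Lit(F_2)$. It therefore suffices to prove that $\Lit(F_2)=\infty$, or equivalently that $T_1(F_2)\not\subseteq\ell^p(F_2)$ for every $p<\infty$.

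For the free group $F_2$, the cleanest route is to invoke the classical theorem of Bo\.zejko--Fendler~\cite{BozejkoFendler} (see also Wysocza\'nski~\cite{Wysoczanski}), recalled as the ``thirdly'' statement in the introduction, which provides exactly this conclusion. To unpack its mechanism via Lemma~\ref{lem:closed:graph}, one would exhibit a sequence of nonnegative elements $f_n\in\CC[F_2]$ with $\|f_n\|_{T_1}/\|f_n\|_p\to 0$. Natural candidates are characteristic functions of sets built from the tree structure of the Cayley graph of $F_2$---for instance geodesic shells $S_n$ or sets of ``positive words'' in the free generators---whose $\ell^p$-norm grows exponentially in $n$. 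For each such set $A$ one then constructs an explicit decomposition $\one_A(x^{-1}y)=f_1(x,y)+f_2(x,y)$ using a prefix/suffix splitting of reduced words along the tree, in order to control $\nop{f_1}\infty\infty$ and $\nop{f_2}11$ separately.

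The main obstacle is to obtain genuinely subexponential bounds for $\|f_n\|_{T_1}$. The elementary estimate $\|f\|_{T_1}\leq 2\|f\|_{2\to 2}$ from Proposition~\ref{prop:N:op}, combined with a Haagerup- or Kesten-type bound on $\|\one_{S_n}\|_{2\to 2}$, yields only the ratio $(n+1)|S_n|^{1/2-1/p}$, which tends to zero only for $p<2$ and therefore gives just $\Lit(F_2)\geq 2$. To reach $\Lit(F_2)=\infty$ one must exploit freeness in a more refined way---typically by selecting for each geodesic from $x$ to $y$ a distinguished midpoint edge and grouping the resulting pairs according to the orientation of that edge in the Cayley tree. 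This is precisely the content of the Bo\.zejko--Fendler argument and produces a test function whose $T_1$-norm grows at most polynomially in $n$ while its $\ell^p$-norm grows exponentially, giving the required ratio tending to zero for every $p<\infty$.
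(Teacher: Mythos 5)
Your reduction via Proposition~\ref{aforsubgroups} is correct, and the outline you give does describe a valid (if more laborious) route, but the paper takes a materially different and much slicker path. Instead of reducing to $F_2$ and then having to build a \emph{sequence} of finitely supported test functions $f_n\in\CC[F_2]$ whose $T_1$-norms grow subexponentially --- which, as you correctly observe, requires the refined geodesic/midpoint combinatorics of the Bo\.zejko--Fendler argument, since the crude bound $\|f\|_{T_1}\le 2\nop f22$ caps you at $\Lit(F_2)\ge 2$ --- the paper reduces to $F_\infty$, the free group of countable rank, which also embeds in every non-abelian free group. This makes the problem essentially disappear: taking $S=T\cup T^{-1}$ for a countable basis $T$, the function $\one_S$ is a \emph{single} element of $T_1(F_\infty)$ that lies in no $\ell^p$ with $p<\infty$, simply because $S$ is infinite. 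The $T_1$-bound $\|\one_S\|_{T_1}\le 2$ then follows from the one-line prefix decomposition $f_1(x,y)=\one[x^{-1}y\in S,\ |y|<|x|]$, $f_2(x,y)=f_1(y,x)$, where $x$ uniquely determines the shorter neighbour $y$ in the tree, so $\nop{f_1}\infty\infty=\nop{f_2}11=1$. No asymptotics, no sequences, no Cheeger-type input. Your proposal would ultimately work, but it outsources the hard combinatorial step to a cited theorem; the paper's choice of $F_\infty$ over $F_2$ is precisely what lets it give a complete, self-contained two-line argument.
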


\begin{proof}
Any non-abelian free group contains a free group $F_\infty$ of countable rank. Therefore, by Proposition~\ref{aforsubgroups}, it suffices to recall that $\Lit(F_\infty)=\infty$, which goes back to Wysocza\'nski~\cite{Wysoczanski}. For convenience, we recall the argument. Let $T$ be a basis of $F_\infty$ and set $S=T\cup T^{-1}$. Then $\one_S$ does not belong to any $\ell^p$ with $p<\infty$. We decompose
$$\one_S(x^{-1}y)  = f_1(x,y) + f_2(x,y)$$
as follows. Set $f_1(x,y)=1$ if $x^{-1}y\in S$ with $y$ shorter than $x$ in the $S$-word-length (and~$0$ otherwise). Then $x$ determines $y$ and hence $\sup_x\sum_y|f_1(x,y)| =1$. We have a similar bound for $f_2(x,y)=f_1(y,x)$ and thus $\| \one_S \|_{T_1(\Gamma))}\leq 2$.
\end{proof}

\section{Asymptotic isoperimetry and the Littlewood exponent}
\subsection{Another norm on \texorpdfstring{$T_1$}{T1}}
We modify the norm $N$ on ${T_1(\Gamma)}$ as follows:
\begin{equation}\label{eq:N'}
N'(f) = \sup_{F} \frac{1}{|F|}  \sum_{a,b\in F} |f(a^{-1}b)|
\end{equation}
where $F\se\Gamma$ ranges over all non-empty finite subsets.

\begin{lemma}\label{lem:N'}
The norm $N'$ is equivalent to $N$ and hence to $\|\cdot\|_{T_1(\Gamma)}$; specifically:
$$\frac12 N(f) \leq N'(f)\leq N(f).$$
\end{lemma}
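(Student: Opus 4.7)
The inequality $N'(f)\leq N(f)$ is essentially tautological: any finite non-empty $F$ contributes to the supremum defining $N(f)$ via the admissible pair $A=B=F$, which has $|A|=|B|=|F|$, and the two sums are identical. So $N'(f)\leq N(f)$ follows directly from the definitions.

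For the reverse inequality $\tfrac12 N(f)\leq N'(f)$, the plan is to take an arbitrary admissible pair $(A,B)$ with $|A|=|B|$ in the definition of $N$, and reduce to a single set by setting $F=A\cup B$. Since $A,B\subseteq F$ and all summands are non-negative, we have
$$\sum_{a\in A,\,b\in B}|f(a^{-1}b)|\;\leq\;\sum_{a,b\in F}|f(a^{-1}b)|\;\leq\;|F|\cdot N'(f).$$
Because $|F|\leq |A|+|B|=2|A|$, dividing by $|A|$ yields
$$\frac{1}{|A|}\sum_{a\in A,\,b\in B}|f(a^{-1}b)|\;\leq\;\frac{|F|}{|A|}\,N'(f)\;\leq\;2\,N'(f).$$
Taking the supremum over all admissible pairs $(A,B)$ then gives $N(f)\leq 2N'(f)$, as desired.

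There is no real obstacle here; the only subtlety to note is that one must not pass to a pair with $|A|\ne|B|$ when bounding $N(f)$ from above, which is why the passage through the single set $F=A\cup B$ (rather than, say, using $A$ and $B$ directly in $N'$) is the natural move. The equivalence with $\|\cdot\|_{T_1(\Gamma)}$ then follows immediately from the preceding Proposition comparing $N$ and $\|\cdot\|_{T_1(\Gamma)}$.
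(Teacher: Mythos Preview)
Your proof is correct and follows exactly the paper's approach: the inequality $N'(f)\leq N(f)$ comes from taking $A=B:=F$, and the inequality $\tfrac12 N(f)\leq N'(f)$ comes from taking $F:=A\cup B$ and using $|F|\leq 2|A|$. You have simply written out in full the two one-line hints given in the paper.
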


\begin{proof}
The first equality follow by setting $F:=A\cup B$ for $A,B$ as in the definition of $N(f)$. For the second, set $A=B:=F$.
\end{proof}

For characteristic functions, the norm $N'$ has the following geometric interpretation.

\begin{prop}\label{prop:N':e}
We have $N'(\one_S)=e(\Gamma, S) |S|$ for every finite symmetric $S\se \Gamma$.
\end{prop}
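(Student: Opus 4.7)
The plan is to expand the double sum in the definition of $N'(\one_S)$ by recognising it as an edge count in the induced subgraph of $\Cay(\Gamma,S)$ on $F$, and then convert this to the boundary $\partial_S F$ via the standard degree-sum identity. Finally, taking the supremum over $F$ turns into taking the infimum of the Cheeger ratios.

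More concretely, first I would observe that for each finite non-empty $F\se\Gamma$,
$$\sum_{a,b\in F}\one_S(a^{-1}b) = \bigl|\{(a,b)\in F\times F : a^{-1}b\in S\}\bigr|,$$
which (under the convention of the paper whereby edges of $\Cay(\Gamma,S)$ correspond to ordered pairs $(a,b)$ with $a^{-1}b\in S$, counted with the multiplicity induced by the symmetric multiset $S$) is equal to $2|E_S(F)|$. Next, using that every vertex has degree exactly $|S|$ in $\Cay(\Gamma,S)$, the degree-sum identity on the induced subgraph together with the definition of the edge boundary gives
$$|S|\cdot |F| = 2|E_S(F)| + |\partial_S F|,$$
so that
$$\frac{1}{|F|}\sum_{a,b\in F}\one_S(a^{-1}b) = \frac{2|E_S(F)|}{|F|} = |S| - \frac{|\partial_S F|}{|F|}.$$

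Finally, taking the supremum over all non-empty finite $F\se \Gamma$ in the definition of $N'$ amounts to taking the infimum of $|\partial_S F|/|F|$, i.e.\ to computing $h(\Gamma,S)$:
$$N'(\one_S) = \sup_F\left(|S| - \frac{|\partial_S F|}{|F|}\right) = |S| - h(\Gamma,S) = |S|\cdot e(\Gamma,S),$$
which is the desired identity.

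The only subtlety — hardly an obstacle, but worth stating — is fixing once and for all the combinatorial convention governing multi-edges and loops in $\Cay(\Gamma,S)$ (for instance when $S$ contains involutions or the identity), so that the identity $2|E_S(F)| + |\partial_S F| = |S|\cdot|F|$ holds on the nose; with the paper's standing convention this is automatic and no separate case analysis is needed.
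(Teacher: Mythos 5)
Your approach is the same as the paper's: expand the double sum as an edge count in the induced subgraph, apply a degree-sum identity to convert it to $|S||F| - |\partial_S F|$, and take suprema. The final intermediate identity $\sum_{a,b\in F}\one_S(a^{-1}b) = |S||F| - |\partial_S F|$ is indeed correct, and the conclusion follows.

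However, your two intermediate claims are each off by a loop term when $e\in S$, and your parenthetical remark that the identity $2|E_S(F)| + |\partial_S F| = |S|\cdot|F|$ ``holds on the nose'' under the paper's convention is not right. The paper's Lemma~\ref{lem:count:E} reads $|F||S| = |\partial_S F| + 2|E_S(F)| - |L_S(F)|$, where $L_S(F)$ is the set of loops; this correction term is present precisely because the paper does \emph{not} assume $e\notin S$. Symmetrically, the double sum counts a loop $(a,a)$ once, not twice, so $\sum_{a,b\in F}\one_S(a^{-1}b) = 2|E_S(F)| - |L_S(F)|$, not $2|E_S(F)|$. Your two errors cancel, so the derived identity and hence the proposition come out correctly, but as written the individual steps are false whenever $S$ contains the identity (try $S=\{e\}$ and $F$ a singleton). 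You should either carry the $|L_S(F)|$ term through both equations as the paper does, or explicitly reduce to the case $e\notin S$ at the outset.
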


\begin{note}\label{note:N:e} 
If we combine this identity with the second inequality of Lemma~\ref{lem:N'} and the inequality $N(\one_S)\leq \nop{\one_S}22$ of Proposition~\ref{prop:N:op}, we conclude that $e(\Gamma, S) |S| \leq \nop{\one_S}22 =\rho(\Gamma, S) |S|$. This is equivalent to the first Cheeger inequality in Theorem~\ref{thm:Mohar}.
\end{note}

For the proof, we record the following identity, which results from counting all edges adjacent to any element of $F$.

\begin{lemma}\label{lem:count:E}
For any finite subsets $F$ and $S=S^{-1}$ of the group $\Gamma$, we have
\begin{equation*}
|F| |S|  =  |\partial_S F| + 2 |E_S(F)| - |L_S(F)|,
\end{equation*}
where $L_S(F)$ denotes the set of loops in $E_S(F)$. In particular, it follows that
\begin{equation*}
e(\Gamma,S)= \sup_F \frac{2 |E_S(F)| - |L_S(F)|}{|F||S|}.
\end{equation*}
\end{lemma}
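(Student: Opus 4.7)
The plan is to prove the identity by a double-counting argument on the set of ``half-edges'' $F \times S$, and then to deduce the formula for $e(\Gamma,S)$ by rearranging the definitions of $h(\Gamma,S)$ and $e(\Gamma,S)$.

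The starting observation is that $|F \times S|=|F||S|$, and I would partition this product into three classes according to where $xs$ lies relative to $F$: (a) $xs \notin F$; (b) $xs = x$, which forces $s=e$; and (c) $xs \in F$ with $xs \neq x$.

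Next I would match these three classes with the three terms on the right-hand side. Class (a) consists, under the paper's convention, of exactly the edges in $\partial_S F$, each labelled by the generator realising the boundary crossing; its cardinality is $|\partial_S F|$. Class (b) is in bijection with loops in $E_S(F)$ via $(x,e)\leftrightarrow\{x\}$, giving $|L_S(F)|$. Class (c) parametrises the ``directed versions'' of non-loop internal edges, and here $S=S^{-1}$ enters: each undirected non-loop edge $\{x,xs\}\subseteq F$ is hit by exactly the two pairs $(x,s)$ and $(xs,s^{-1})$, so its contribution is $2\bigl(|E_S(F)|-|L_S(F)|\bigr)$. Summing the three contributions gives
$$|F||S| \;=\; |\partial_S F| \;+\; 2\bigl(|E_S(F)|-|L_S(F)|\bigr) \;+\; |L_S(F)| \;=\; |\partial_S F| + 2|E_S(F)| - |L_S(F)|,$$
as required.

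For the second assertion I would divide this identity by $|F||S|$, rearrange to
$$\frac{|\partial_S F|}{|F||S|} \;=\; 1 - \frac{2|E_S(F)|-|L_S(F)|}{|F||S|},$$
take the infimum over non-empty finite $F\subseteq\Gamma$, and apply the definitions $h(\Gamma,S)=\inf_F |\partial_S F|/|F|$ together with $e(\Gamma,S)=1-h(\Gamma,S)/|S|$: the infimum on the left equals $h(\Gamma,S)/|S|$, and subtracting both sides from $1$ interchanges $\inf$ with $\sup$, producing the formula for $e(\Gamma,S)$.

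There is no substantial obstacle; the argument is elementary bookkeeping. The only delicate point is the multiplicity for loops: a loop at $x$ produces only one pair in $F\times S$, namely $(x,e)$, because $e$ coincides with its own inverse. This is precisely why $|L_S(F)|$ enters with coefficient $-1$, correcting the over-counting implicit in assigning a factor $2$ to every edge of $E_S(F)$.
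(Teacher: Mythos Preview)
Your proof is correct and is precisely the approach the paper has in mind: the paper does not spell out a proof but only remarks that the identity ``results from counting all edges adjacent to any element of $F$,'' which is exactly your half-edge count on $F\times S$. Your treatment of the three cases and the loop correction is accurate, and the derivation of the formula for $e(\Gamma,S)$ from the identity is the intended one.
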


The second statement of the lemma explains why we called $e(\Gamma,S)$ the \emph{relative maximal average degree} (loops are given half-weight for vertex-degree).

\begin{proof}[Proof of Proposition~\ref{prop:N':e}]
The sum $\sum_{a,b\in F} \one_S(a^{-1} b)$ counts the elements of $E_S(F)$ twice, except that loops are counted once. Therefore, its value is $|F| |S| -  |\partial_S F|$ by Lemma~\ref{lem:count:E}. It follows that $N'(\one_S)= |S| - h(\Gamma, S)$, as desired.
\end{proof}

\begin{note}
 Lemma~\ref{lem:count:E} together with the first Cheeger inequality in Theorem~\ref{thm:Mohar} also gives
\begin{equation*}
|E_S(F)| - \frac12 |L_S(F)|  \leq \frac12 |F| |S| \rho(\Gamma,S).
\end{equation*}
Note that the availability of this kind of bound crucially depends on the fact that we are working on a regular graph.
\end{note}

\subsection{Proof of Theorem~\ref{thm:Lit:ibet}}
We will say that a function is a \textbf{box-function} if is is a multiple of a characteristic function. We need the following ``box trick''.

\begin{lemma}\label{lemma:box}
Let $0<q<p$. For every $f\geq 0$ in $\ell^p(\Gamma)$ there is a box function $f^\square$ with $0\leq f^\square \leq f$ and satisfying
$$\|f^\square\|_q \geq \zeta(p/q)^{-\frac1p} \, \|f\|_p.$$
\end{lemma}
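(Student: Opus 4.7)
The natural strategy is to reduce to the one-variable question of choosing the best ``truncation level'' for $f$. Since $f\geq 0$ is in $\ell^p(\Gamma)$, in particular its support is at most countable, so I can consider the non-increasing rearrangement $f^*\colon\NN_{\geq 1}\to[0,\infty)$ of $f$, enumerating the values of $f$ on its support in weakly decreasing order. By construction, $\|f\|_p^p=\sum_{n\geq 1}f^*(n)^p$.

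Given a finite subset $E\se\Gamma$ of size $n$, the largest constant $c$ for which $c\, \one_E\leq f$ pointwise is $c=\min_{x\in E}f(x)$, and this is maximised, among $|E|=n$, by taking $E$ to be a set supporting the top $n$ values of $f$; the resulting constant is $f^*(n)$. I will therefore restrict attention to the sequence of candidate box-functions $f_n^\square$ supported on these ``top-$n$'' sets with height $f^*(n)$. Then $\|f_n^\square\|_q^q = n\, f^*(n)^q$, so the conclusion of the lemma is equivalent to the existence of some $n\geq 1$ with
\[
 n\, f^*(n)^q \;\geq\; \zeta(p/q)^{-q/p}\,\|f\|_p^q.
\]

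The proof is then by contradiction: assume that for every $n\geq 1$ one has $n\, f^*(n)^q < \zeta(p/q)^{-q/p}\|f\|_p^q$, i.e.\ $f^*(n)^p < \zeta(p/q)^{-1}\|f\|_p^p\cdot n^{-p/q}$. Summing over $n$, and using that $p/q>1$ so that $\sum_{n\geq 1}n^{-p/q}=\zeta(p/q)$, yields
\[
 \|f\|_p^p \;=\; \sum_{n\geq 1} f^*(n)^p \;<\; \zeta(p/q)^{-1}\,\|f\|_p^p \cdot \zeta(p/q) \;=\; \|f\|_p^p,
\]
which is the desired contradiction. The only subtlety worth spelling out is the reduction to the ``top-$n$'' box-functions in the second paragraph; once that is in place, the zeta factor is forced by the convergence exponent of $\sum n^{-p/q}$, and there is no real obstacle.
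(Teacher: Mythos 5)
Your proof is correct and is essentially the same argument as the paper's: both pass to the non-increasing rearrangement of $f$ (the paper says ``reduce to the case of a non-increasing function on $\NN^*$'', you phrase it via $f^*$), both restrict attention to the ``top-$n$'' box functions, and both derive the required index $n$ by the identical contradiction argument of summing $p$-th powers against $\sum n^{-p/q}=\zeta(p/q)$.
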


\noindent
(The constant is optimal: consider $n\mapsto n^{-1/q}$ in the proof below.)

\begin{proof}
Since $f$ has countable support and since the statement is invariant under any permutation of the support, it suffices to give a proof for the case of a non-increasing function $f\colon\NN^*\to\RR_+$. For such $f$, there is $n$ such that
\begin{equation}\label{eq:box}
f(n) \geq \zeta(p/q)^{-\frac1p} \, \|f\|_p \  n ^{-\frac1q}
\end{equation}
because otherwise we obtain a contradiction by summing over all $n$ the $p$-powers of both sides in~\eqref{eq:box}. We now define $f^\square(m)=f(n)$ for $m\leq n$ and $f^\square(m)=0$ otherwise. Then $\|f^\square\|_q = f(n) n^{1/q}$ and hence $\|f^\square\|_q$ satisfies the statement of the lemma thanks to~\eqref{eq:box}.
\end{proof}

\begin{proof}[Proof of Theorem~\ref{thm:Lit:ibet}]
Fix any $0<q<p<\Lit(\Gamma)$. By Lemma~\ref{lem:closed:graph}, there is a sequence $f_n\geq 0$ in $\CC[\Gamma]$ with $N'(f_n) / \|f\|_p \to 0$. Thus Lemma~\ref{lemma:box} provides us with a sequence of finite sets $S_n$ such that $N'(\one_{S_n}) / \|\one_{S_n}\|_q$ tends to zero; we used here that $0\leq f^\square \leq f$ implies $N'( f^\square) \leq N'(f)$. We can assume $S_n$ symmetric by replacing it with $S_n \cup S_n^{-1}$ since this introduces at most a factor~$2$ in the norm. We have in particular
$$\lim_{m\to\infty} \inf_{|S|\geq m} \frac{\ln N'(\one_{S}) }{\ln \|\one_{S}\|_q } \leq \limsup_{n\to\infty} \frac{\ln N'(\one_{S_n}) }{\ln \|\one_{S_n}\|_q } \leq 1.$$
By Proposition~\ref{prop:N':e}, the numerator of the left fraction is $\ln e(\Gamma, S) + \ln |S|$. The denominator is $\frac 1q \ln|S|$. Therefore
$$\frac{\ln e(\Gamma, S)}{\ln |S|} \leq\frac1q -1$$
for sets $S$ of arbitrarily large size. Since $q$ can be taken arbitrarily close to $\Lit(\Gamma)$, we conclude $\ibet(\Gamma) \leq 1-1/\Lit(\Gamma)$.

\smallskip
Suppose for a contradiction that the inequality is strict. We can then choose $p>q>\Lit(\Gamma)$ with $\ibet(\Gamma)> 1-1/p$. By definition of $\ibet(\Gamma)$, there is a sequence of finite symmetric sets $S_n$ in $\Gamma$ with $|S_n|\to\infty$ and $\ln e(\Gamma, S_n) / \ln|S_n|$ bounded by $1-1/p$ for all $n$. Using again Proposition~\ref{prop:N':e}, this bound is equivalent to $N'(\one_{S_n})\leq  |S_n|^{1/p}$. Since $|S_n|\to\infty$ and $p>q$ it follows that $N'(\one_{S_n}) /  |S_n|^{1/q} \to 0$. On the other hand, $|S_n|^{1/q}  = \|\one_{S_n}\|_q$ is bounded by a constant times $N'(\one_{S_n})$ since $q>\Lit(\Gamma)$; this is a contradiction.
\end{proof}

\section{An example of a group \texorpdfstring{$\Lambda$}{} with \texorpdfstring{$1< \Lit(\Lambda)<\infty$}{1<Lit<infty}} \label{smallcanc}

The aim of this section is to prove Theorem~\ref{nontrivial}  as an application of graphical small cancellation theory for hyperbolic groups --- by now a common source of exotic groups. Indeed, we construct a monster group with the property that we can control the isoperimetric behaviour for \emph{every} symmetric subset. The key step in the inductive construction is the following theorem.

\begin{theorem}\label{theorem:hyperbolic_quotient}
There exists $\epsilon>0$ with the following property. Suppose $\Gamma$ is a non-elementary torsion-free hyperbolic group, $S,K \se \Gamma$ finite symmetric subsets. Then, there exists a non-elementary torsion-free hyperbolic quotient $\pi \colon \Gamma\to\Lambda$ such that $\pi$ is injective on $K$ and $e(\Lambda,\pi(S)) \geq \frac12 |\pi(S)|^{-1+\epsilon}$.
\end{theorem}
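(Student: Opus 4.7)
The plan is to realize $\Lambda$ as a graphical small cancellation quotient of $\Gamma$ along a labelled Ramanujan graph. Fix a small universal constant $\epsilon > 0$ (to be pinned down at the end), set $d := \lceil |S|^\epsilon \rceil$, and let $R$ be a connected $d$-regular Ramanujan graph on $n$ vertices of logarithmic girth (for instance, via Lubotzky--Phillips--Sarnak), with $n$ taken large compared to $|K|$ and the hyperbolicity constant of $\Gamma$. Orient the edges of $R$ and label each oriented edge by an element of $S$, with reverse edges receiving inverse labels and with the $d$ outgoing labels at each vertex pairwise distinct---possible since $d < |S|$. The candidate group is $\Lambda := \Gamma / \langle\langle w_c : c \text{ a closed walk in } R \rangle\rangle$, where $w_c$ is the word in $S$ read along $c$.

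To apply a graphical small cancellation theorem for hyperbolic groups (in the style of Ollivier, Coulon, or Gruber--Sisto), the labelling must satisfy a condition of type $Gr'(\lambda)$: no piece (a labelled subpath occurring in two distinct positions in $R$) should exceed a $\lambda$-fraction of the girth. For a \emph{random} labelling the expected number of pieces of length $\ell$ is at most $O(n^2 d^{2\ell}) \cdot |S|^{-\ell}$, which decays once $\ell(1-2\epsilon)\log|S| > 2\log n$. Since the girth of $R$ is proportional to $\log n / (\epsilon \log|S|)$, for $\epsilon$ small enough one can fit $\ell$ into the window required by $Gr'(\lambda)$ for any prescribed small $\lambda$, and a probabilistic existence argument produces the needed labelling.

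With such a labelling in hand, the small cancellation theorem provides a non-elementary torsion-free hyperbolic quotient $\pi\colon \Gamma \to \Lambda$. Provided $\lambda$ is chosen small enough depending on the word length of $K$ in $\Gamma$, the map $\pi$ is injective on $K$, and the labelled graph $R$ embeds as a subgraph of $\Cay(\Lambda, \pi(S))$. Let $F\subseteq \Lambda$ be the image of $V(R)$. By the embedding, each of the $d|V(R)|/2$ edges of $R$ contributes an internal edge of the induced subgraph on $F$, while the distinctness of local labels together with the large girth of $R$ rule out loops and duplicate edges within $F$. Therefore $2|E_{\pi(S)}(F)| - |L_{\pi(S)}(F)| \geq (d-1)|F|$, and Lemma~\ref{lem:count:E} yields
\[
e(\Lambda, \pi(S)) \;\geq\; \frac{d-1}{|\pi(S)|} \;\geq\; \tfrac12\, \frac{|S|^\epsilon}{|\pi(S)|} \;\geq\; \tfrac12\, |\pi(S)|^{-1+\epsilon},
\]
using $|\pi(S)| \leq |S|$ and $d \geq |S|^\epsilon$ (the case of small $|S|$ is handled separately by taking $\Lambda = \Gamma$).

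The hard part is the tension between the \emph{fixed} alphabet $S$ and the small cancellation requirement. In classical graphical small cancellation one is typically free to enlarge the alphabet; here the labels must come from the prescribed set, and the only freedom is in the ratio $|S|/d = |S|^{1-\epsilon}$, which must be polynomially large. Balancing this against the logarithmic girth of the Ramanujan graph and against the small cancellation threshold $\lambda$ delivered by the hyperbolic theorem---itself required to be uniform in $\Gamma$, $S$, $K$---is the most delicate aspect, and it is precisely this balance which dictates the universal constant $\epsilon$.
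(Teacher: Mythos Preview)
Your overall strategy---random labelling of a Ramanujan graph of degree $\sim |S|^\epsilon$, then graphical small cancellation over the hyperbolic group---is exactly the paper's approach. But there is a genuine gap in how you verify the small cancellation hypotheses. Your probabilistic estimate $O(n^2 d^{2\ell})\cdot |S|^{-\ell}$ counts pairs of paths with the same \emph{label}, which yields a combinatorial $Gr'(\lambda)$ condition over the free monoid on $S$. The small cancellation theorems over hyperbolic groups (Coulon, Arzhantseva--Delzant), however, require the labelled paths to be \emph{quasi-geodesic in $\Gamma$} and the relators to be long in $\Cay(\Gamma,S)$, not merely as words. Nothing in your argument ensures this: if $S$ happens to generate an elementary subgroup, or more generally if $\rho(\Gamma,S)$ is close to $1$, random words in $S$ will collapse in $\Gamma$ and the geometric hypotheses fail outright. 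The paper resolves this by a dichotomy: if $\rho(\Gamma,S)\ge |S|^{-\delta}$ for a fixed $\delta<1/2$, then the Cheeger inequality already gives $e(\Gamma,S)\ge\tfrac12\rho(\Gamma,S)^2\ge\tfrac12|S|^{-2\delta}$ with $\Lambda=\Gamma$; only under the assumption $\rho(\Gamma,S)<|S|^{-\delta}$ does one proceed to the random labelling, and this spectral gap is precisely what drives the quasi-isometric embedding of $\tilde\Theta_n$ into $\Cay(\Gamma,S)$ in the Arzhantseva--Delzant computation. (There is also a preliminary reduction, via Ol'shanski\u{\i}'s theorem, to the case where $S$ generates $\Gamma$.) Without this case split your argument cannot cover all finite symmetric $S$, and the universal $\epsilon$ is not secured.

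A secondary point: you assert that $R$ \emph{embeds} in $\Cay(\Lambda,\pi(S))$. The paper does not claim this; the image $f(\Theta_n)$ is in general a quotient $H_n\backslash T_n$ in which vertices may be identified. What survives is the inequality $|E(f(\Theta_n))|/|V(f(\Theta_n))|\ge |E(\Theta_n)|/|V(\Theta_n)|$, proved by comparing ranks of fundamental groups (this uses injectivity of $\pi_1(\Theta_n)\to\Gamma$). That inequality suffices for the isoperimetric bound, but it is not automatic and deserves its own lemma.
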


It is now immediate to prove the main result of this section.

\begin{proof}[Proof of Theorem~\ref{nontrivial}]
Consider a non-elementary torsion-free hyperbolic Kazhdan group $\Gamma=\Gamma_0$ and enumerate all finite symmetric subsets of $\Gamma$ in form of a sequence $(\Sigma_n)_{n \geq 1}$. Set $K_0= \varnothing.$ For each natural number $n \geq 1$, we construct a quotient $\pi_n \colon \Gamma_{n-1} \to \Gamma_n$ and a finite subset $K_n \se \Gamma_n$ as follows: Consider the image $S_n$ of $\Sigma_n$ in $\Gamma_{n-1}$ and take $K_{n-1} \se \Gamma_{n-1}$ as constructed by induction. By the preceding theorem, there exists a torsion-free hyperbolic quotient $\pi_n \colon \Gamma_{n-1} \to \Gamma_n$, such $e(\Gamma_n,\pi_n(S_n)) \geq \frac12 |\pi_n(S_n)|^{-1 + \epsilon}$ witnessed, using Lemma~\ref{lem:count:E}, by a finite set $F_n \se \Gamma_n$ with
$$\frac{|E_{\pi_n(S_n)}(F_n)|}{|F_n|} \geq \frac15|\pi_n(S_n)|^{\epsilon}$$
and $\pi_n$ is injective on $K_{n-1}$. We set $K_n:=\pi_n(K_{n-1} \cup S_n) \cup F_n \cup F_n^{-1}$. Consider now the inductive limit
$\Lambda := \lim_{n \to \infty} \Gamma_n$ along the maps $\pi_n \colon \Gamma_{n-1} \to \Gamma_n$.
For every finite symmetric subset $S \se \Lambda$, it easily follows, again using Lemma~\ref{lem:count:E}, that we have $e(\Lambda,S) \geq \frac15|S|^{-1+\epsilon}$ and thus $\eta(\Lambda) \leq 1-\epsilon<1$ or equivalently $\Lit(\Lambda)\leq1/\epsilon< \infty$.

Now, $\Lambda$ is an infinite Kazhdan group and hence non-amenable. We conclude from Theorem~\ref{beginning} that $\Lit(\Lambda)>1$.
\end{proof}

We now proceed with the proof of Theorem~\ref{theorem:hyperbolic_quotient}. We invoke the following two propositions from small cancellation theory:

\begin{prop}\label{prop:tarski_monster} Let $\Gamma$ be a non-elementary torsion-free hyperbolic group and $S$ a symmetric finite subset such that the subgroup generated by $S$ is non-elementary. Let $K$ be a finite subset of $\Gamma$. Then there exists a non-elementary torsion-free hyperbolic quotient $\pi \colon \Gamma\to\Lambda$ such that $\pi(S)$ generates $\Lambda$, and $\pi$ is injective on $K$.
\end{prop}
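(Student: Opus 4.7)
The plan is to realise $\Lambda$ as a quotient of $\Gamma$ by finitely many carefully chosen relators via small cancellation theory over hyperbolic groups, in the style of Olshanskii.

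First, fix a finite generating set $T = \{t_1, \ldots, t_r\}$ of $\Gamma$. I will construct words $w_1, \ldots, w_r$ in the alphabet $S^{\pm 1}$ and impose the relations $t_i = w_i$; writing $r_i := t_i w_i^{-1}$ and $\Lambda := \Gamma / \langle\langle r_1, \ldots, r_r \rangle\rangle$, one automatically has $\pi(t_i) = \pi(w_i) \in \langle \pi(S) \rangle$ for every $i$, so $\pi(S)$ generates $\Lambda$. What remains is to choose the $w_i$'s so that $\Lambda$ is non-elementary, torsion-free, hyperbolic, and the quotient map is injective on $K$.

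Second, I exploit the hypothesis that $\langle S \rangle \leq \Gamma$ is non-elementary: inside $\langle S \rangle$ one finds infinitely many pairwise independent loxodromic elements $g_1, g_2, \ldots$, each expressible as a fixed word in the alphabet $S^{\pm 1}$. Setting $w_i := g_1^{n_{i,1}} \cdots g_k^{n_{i,k}}$ for very large exponents $n_{i,j}$ chosen so that the tuples $(n_{i,1}, \ldots, n_{i,k})$ are pairwise distinct, standard overlap estimates for powers of pairwise independent loxodromic elements in a hyperbolic group show that, for sufficiently large exponents, the symmetrised family of relators $\{r_i^{\pm 1}\}_{i \leq r}$, together with their cyclic conjugates, satisfies an arbitrarily strong $C'(\lambda)$-type small cancellation condition over $\Gamma$.

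Third, I appeal to Olshanskii's theorem on small cancellation quotients of torsion-free hyperbolic groups: for $\lambda$ small enough, $\Lambda$ is non-elementary and hyperbolic; moreover the quotient map is injective on any pre-specified finite subset once the $w_i$'s are chosen long enough relative to that subset, so in particular it can be made injective on $K$. Finally, arranging each $r_i$ so as not to be a proper power, and so that no cyclic conjugate of any $r_i$ shares a long overlap with a proper power, yields torsion-freeness of $\Lambda$ by the standard classification of torsion in small cancellation quotients of torsion-free hyperbolic groups.

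The main obstacle is this last step: simultaneously guaranteeing torsion-freeness, injectivity on $K$, and the generation property. Hyperbolic small cancellation can accidentally introduce torsion through unexpected proper-power structure among the relators, and sidestepping this requires the delicate parameter choices encapsulated in Olshanskii's full machinery, rather than merely the qualitative persistence of hyperbolicity under adding $C'(\lambda)$ relations.
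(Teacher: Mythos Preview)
Your proposal is correct and follows essentially the same route as the paper: both rest on Olshanskii's small-cancellation theory for quotients of hyperbolic groups. The paper's proof is in fact a single-sentence citation of \cite{Olshanskii}*{Theorem~1}, whereas you have unpacked the underlying construction (relators $t_i w_i^{-1}$ with $w_i$ built from independent loxodromics in $\langle S\rangle$) before ultimately appealing to that same machinery for hyperbolicity, torsion-freeness, non-elementariness, and injectivity on $K$.
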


\begin{proof}
This is a direct consequence of \cite{Olshanskii}*{Theorem~1}.
\end{proof}

The next proposition is a consequence of the inductive step of Gromov's construction of random groups that contain (in a certain sense) expander graphs~\cite{MR1978492}. To make the dependencies of the involved constants clear, we shall follow the detailed account of Gromov's result given by Arzhantseva--Delzant~\cite{arshdel}. We will use Coulon's explanation \cite{Coulon} of the small cancellation theorem involved in the construction.

We begin by explaining the setup. Given a graph $\Theta$ whose edges are oriented and a finite symmetric subset $S$ of a group $\Gamma$, a labelling of $\Theta$ by $S$ is a map $\ell\colon E(\Theta)\to S$. We shall identify two $S$-labelled graphs $\Theta$ and $\Theta'$ if $\Theta'$ can be obtained from $\Theta$ by a collection of moves of the form: flip the orientation of an edge and replace its label $s\in S$ by $s^{-1}\in S$. Notice that the Cayley graph $\Cay(\Gamma,S)$ carries a natural labelling by $S$. 

Given an $S$-labelled graph $\Theta$ and an $S'$-labelled graph $\Theta'$ together with a map $S\to S'$, there is an obvious notion of label-preserving graph homomorphism $\Theta\to\Theta'$. When we say label-preserving graph isomorphism, we shall also require that $S\to S'$ is a bijection. If $p$ is a path in $\Theta$ then we can write $p=(e_1^{\epsilon_1},e_2^{\epsilon_2},\dots,e_k^{\epsilon_k})$, where each $e_i$ is an oriented edge and $\epsilon_i\in\{\pm1\}$. The label of $p$ is defined as $$\ell(p)=\ell(e_1)^{\epsilon_1}\ell(e_2)^{\epsilon_2}\dots\ell(e_k)^{\epsilon_k}.$$ Denote by $\Gamma/\Theta$ the quotient of $\Gamma$ by the normal closure of the image in $\Gamma$ of all labels of closed paths in $\Theta$. Then, for each connected component of $\Theta$, the labelling induces a label-preserving graph homomorphism to $\Cay(\Gamma/\Theta,\pi(S))$.

The uniform random labelling of $\Theta$ by $S$ is the probability distribution on the set of labellings of $\Theta$ obtained as the product distribution from the uniform distribution on $S$ for each edge. In other words, given an edge $e$, for each $s\in S$ we label $e$ by $s$ with probability $1/ |S|$, and labels of distinct edges are independent. As $S$ is symmetric (and considering the identification discussed above), this distribution does not depend on the orientation of $\Theta$. Thus, if $\Theta$ was not a priori not oriented, we can simply endow it with any fixed orientation. 

The girth of a graph is the length of a shortest homotopically non-trivial closed path if such a path exists and $\infty$ otherwise. We denote the diameter of a space $\Theta$ by $\diam(\Theta)$. 

\begin{prop}\label{prop:small_cancellation}
Let $\delta>0$ and $A>0$. Then there exist $\nu>0$ and $\epsilon>0$ with the following property. Suppose $\Gamma$ is a non-elementary torsion-free hyperbolic group and $S$ is a finite symmetric generating subset such that $\rho(\Gamma,S)\leq|S|^{-\delta}$. Let $d\leq|S|^{2\epsilon}$ and let $(\Theta_n)_{n\in\NN}$ be a sequence of finite connected graphs of vertex degree at most $d$ such that, for all $n$, we have
$$\diam(\Theta_n)\leq A\girth(\Theta_n)$$
and such that $|V(\Theta_n)|\to\infty$.
Then, with probability tending to $1$ as $n \to \infty$, for the uniform random edge-labelling of $\Theta_n$ by $S$, the following hold.
\begin{itemize}
 \item[(i)] The group $\Gamma/\Theta_n$ is non-elementary torsion-free hyperbolic.
 \item[(ii)] The map $\pi \colon \Gamma\to\Gamma/\Theta_n$ is injective on a ball of radius $\nu\girth(\Theta_n)$ w.r.t. $S$.
 \item[(iii)] The map $\pi_1(\Theta_n)\to\Gamma$ induced by the labelling is injective.
 \item[(iv)] Let $T_n$ be an image of the universal cover $\tilde\Theta_n$ of $\Theta_n$ in $\Cay(\Gamma,S)$ 
 and $H_n$ a corresponding conjugate of
 $\pi_1(\Theta_n)$ in $\Gamma$. Then, for any label-preserving graph homomorphism $f \colon \Theta_n\to\Cay(\Gamma/\Theta_n,\pi(S))$, we have $f(\Theta_n)\cong H_n\backslash T_n$ as labelled graphs.
\end{itemize}
\end{prop}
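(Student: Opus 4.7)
The plan is to follow Gromov's random graphical group construction in the form given by Arzhantseva--Delzant~\cite{arshdel}, with the small cancellation input as organised by Coulon~\cite{Coulon}. The strategy has two stages: first, show that with probability tending to $1$ the uniform random labelling of $\Theta_n$ satisfies a graphical $C'(\lambda)$-condition over $\Gamma$ for some uniform $\lambda>0$; second, invoke the Delzant--Gromov small cancellation theorem for hyperbolic groups to read off (i)--(iv).

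For the probabilistic input, fix a reduced path $p$ in $\Theta_n$ of combinatorial length $k$ with prescribed endpoints; the random label $\ell(p)$ is a product of $k$ i.i.d.\ uniform $S$-elements, and the identification of $\rho(\Gamma,S)$ with the $\ell^2$-operator norm of $M_S$ together with Cauchy--Schwarz yields $\Pr[\ell(p)=g]\leq\rho(\Gamma,S)^k\leq|S|^{-\delta k}$ for every $g\in\Gamma$. The intermediate goal is: with probability tending to $1$, no pair of distinct reduced paths in $\Theta_n$ of common length $\ell\geq\lambda\girth(\Theta_n)$ carries labels that differ by a short element of $\Gamma$.

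For the count, the number of such pairs of paths is at most $|V(\Theta_n)|^2 d^{2\ell}$; the degree bound together with the hypothesis $\diam(\Theta_n)\leq A\girth(\Theta_n)$ gives $|V(\Theta_n)|\leq d^{A\girth(\Theta_n)+1}$, and $d\leq|S|^{2\epsilon}$. Putting these together, at $\ell=\lambda\girth(\Theta_n)$ the expected number of long pieces is bounded (up to a multiplicative constant) by $|S|^{(4\epsilon A+4\epsilon\lambda-\delta\lambda)\girth(\Theta_n)}$. Choosing $\epsilon$ strictly less than $\delta\lambda/(4A+4\lambda)$ makes the exponent negative; as $|V(\Theta_n)|\to\infty$ together with the degree bound forces $\girth(\Theta_n)\to\infty$, Markov's inequality delivers the no-long-piece conclusion in probability.

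Once the graphical $C'(\lambda)$-condition holds with $\lambda$ below the Delzant--Gromov threshold, Coulon's version of the small cancellation theorem~\cite{Coulon} reads off (i); the controlled injectivity radius in (ii) comes from the standard quasi-geodesic stability for pieces, with $\nu>0$ depending only on $\lambda$ and the hyperbolicity constant of $\Gamma$; (iii) follows because the labelling then defines an immersion $\Theta_n\to\Cay(\Gamma,S)$, so the universal cover $\tilde\Theta_n$ embeds in $\Cay(\Gamma,S)$ and $\pi_1(\Theta_n)\to\Gamma$ is injective; finally (iv) is the standard observation that any label-preserving graph map $\Theta_n\to\Cay(\Gamma/\Theta_n,\pi(S))$ factors through the intermediate cover $H_n\backslash T_n$ arising in (iii). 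The main obstacle is the quantitative balance of $|V(\Theta_n)|^2$, $d^{2\ell}$ and $|S|^{-\delta\ell}$, and the extraction of $\epsilon,\nu$ depending only on $\delta$ and $A$ and not on $\Gamma$, $S$ or $d$; this accounting is carried out in detail in~\cite{arshdel} and~\cite{Coulon}, which we would cite rather than reprove.
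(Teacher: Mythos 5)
Your proposal identifies the right sources (Arzhantseva--Delzant and Coulon) and the right two-stage structure --- a probabilistic estimate on the random labelling followed by an application of geometric small cancellation --- which is the same overall route the paper takes. Two points, though, deserve scrutiny.

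First, the paper does not run a direct ``piece-counting'' argument of the kind you sketch. Instead it verifies the deterministic $(b,\xi_0)$-thinness condition of \cite{arshdel} (bounding the number of simple paths of intermediate length against the degree $d$ and the girth, via the relation $|V(\Theta_n)|\leq d^{A\girth(\Theta_n)+1}$) and then invokes the Arzhantseva--Delzant Lemma~5.7 as a black box to obtain, with high probability, a $(\rho_n/2)$-local $(\lambda_0,\cdot)$-quasi-isometric embedding $\tilde\Theta_n\to\Cay(\Gamma,S)$. This is then promoted to a global quasi-isometric embedding with a controlled quasi-convexity constant via local-to-global stability from \cite{Coornaert}, and the precise quantities $T(\mathcal Q)$, $\alpha/T(\mathcal Q)$, $\Delta'(\mathcal Q)/T(\mathcal Q)$ needed for Coulon's Theorem~7.10 are verified. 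Your estimate $\Pr[\ell(p)=g]\leq\rho(\Gamma,S)^k$ and the resulting threshold $\epsilon<\delta\lambda/(4A+4\lambda)$ are in the right ballpark, but the uniform extraction of $\epsilon$ and $\nu$ from $\delta$ and $A$ really does route through the $\xi_0$-thinness formalism and the Coornaert local-to-global constants; you also silently assume independence of edge labels for paths that may share edges, and you do not produce the ingredients (such as Kesten's lower bound $\rho(\Gamma,S)\geq(|S|-1)^{1/2}/|S|$, used to pin down $\lambda_0$) that the verification of Coulon's hypotheses actually needs.

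Second, and more seriously, your treatment of~(iv) has a genuine gap. Since by~(iii) the map $\pi_1(\Theta_n)\to\Gamma$ is injective, one has $H_n\backslash T_n\cong\Theta_n$, so the statement ``$f$ factors through the intermediate cover $H_n\backslash T_n$'' is content-free: the domain of $f$ already \emph{is} $\Theta_n\cong H_n\backslash T_n$. What~(iv) asserts is that $f(\Theta_n)\cong H_n\backslash T_n$, i.e.\ that the induced map $H_n\backslash T_n\to\Cay(\Gamma/\Theta_n,\pi(S))$ is \emph{injective}. Equivalently: if $g$ lies in the normal closure $K_n$ of $H_n$ in $\Gamma$ and $gx=y$ for vertices $x,y\in T_n$, then in fact $g\in H_n$. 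This is a nontrivial rigidity statement --- it is exactly the ``$\Theta_n$ embeds in $\Cay(\Gamma/\Theta_n)$'' feature of graphical small cancellation, and the paper proves it explicitly via Coulon's cone-off construction: the apexes of the cones over the $\Gamma$-translates of a neighbourhood of $T_n$ are $2\rho$-separated (because $\mathcal R$ is a $2\rho$-rotation family), and the stabiliser of an apex in $K_n$ is identified as $H_n$ by \cite{Coulon}*{Corollary~3.13}. Merely observing a factorisation does not establish this injectivity, so your argument for~(iv) does not close.
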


The proof following \cite{arshdel} consists of two ingredients: the first ingredient is the study of geometry of the image of the random words read on $\Theta_n$ in $\Gamma$ \cite{arshdel}*{Section~5}. Notice here that the symmetric measure $\mu$ on $\Gamma$ considered in \cite{arshdel} does not exactly correspond to our definition in the case that $e\in S$. However, the computations in \cite{arshdel} also apply (replacing their $2k$ by our $|S|$) since, in our situation, the measures on $S$ we use to define the Markov operator (which gives the spectral radius) and to define the random labelling coincide with each other. The only adjustment from \cite{arshdel} is that Kesten's lower bound on $\rho(\Gamma,S)$ used below looks slightly different in our case. 

The second ingredient is an application of results of geometric small cancellation theory. For this, we chose to use Coulon's version \cite{Coulon}*{Theorem~7.10} of \cite{arshdel}*{Theorem~3.10}. See \cite{Coulon}*{page~325} for a remark on the compatibility of the approaches in \cite{arshdel} and in \cite{Coulon}.

\begin{proof}[Proof of Proposition~\ref{prop:small_cancellation}] Let $\rho_0$ the constant of \cite{Coulon}*{Theorem~7.10}, and let $\delta_2$ and $\Delta_2$ be the values obtained from that theorem for  $k=\max\{8/\delta,1\}$ and $\rho=\rho_0$. Thus, $\delta_2$ and $\Delta_2$ only depend on $\delta$. We denote $\lambda:=\max\{8/\delta,1\}$ and by $\delta_\Gamma$ the hyperbolicity constant of $\Cay(\Gamma,S)$. By \cite{Coornaert}*{Chapter~3}, see also \cite{arshdel}*{Theorem~3.7} and \cite{Coulon}*{Proposition~7.9}, there exist constants $C_{loc}(\lambda),C_{QC}(\lambda)>0$ depending only on $\lambda$ and constants $D_{loc}(\lambda,\delta_\Gamma),D_{QC}(\lambda,\delta_\Gamma)>0$ depending only on $\lambda$ and $\delta_\Gamma$ such that any $(D_{loc}(\lambda,\delta_\Gamma)+C_{loc}(\lambda)\eta)$-local $(\lambda,\eta)$-quasi-geodesic in a $\delta_\Gamma$-hyperbolic geodesic space is a global $(2\lambda,\eta)$-quasi-geodesic whose image is $(D_{QC}(\lambda,\delta_\Gamma)+C_{QC}(\lambda)\eta)$-quasi-convex.  See \cite{Coulon}*{Definition~7.8} for the definition of local quasi-geodesic we use. Set $\xi_0:=\min\{\frac{1}{4C_{loc}(\lambda)},\frac{1}{4\lambda},\frac{\delta_2}{4\lambda C_{QC}(\lambda)},\frac{\Delta_2}{1000\lambda}\}$ and $A':=\max\{1,A\}$. We prove the claim of the proposition for
$$\nu:=\frac{\rho}{80\pi\lambda\sinh(\rho)}\kern3mm \text{and} \kern3mm\epsilon:=\frac{\delta\xi_0}{4(A'+\xi_0)}.$$
Suppose $d\leq|S|^{2\epsilon}$. Let $b:=-\frac{3}{4}\ln(\rho(\Gamma,S))$. We claim: $(\Theta_n)_{n\in\NN}$ is $(b,\xi_0)$-thin in the sense of~\cite{arshdel}*{Definition~5.3}. (Notice that the definition of ``$b$-thin'' in~\cite{arshdel} carries an implied constant $\xi_0$.) For this, it is sufficient to verify, denoting $\rho_n:=\girth(\Theta_n)$: for each $n\in\NN$ and each $\xi\in[\xi_0,1/2)$, the number of simple paths in $\Theta_n$ of length $\xi\rho_n$, denoted $b_n(\xi\rho_n)$, satisfies $\frac{1}{d}b_n(\xi\rho_n)\leq \exp(b\xi_0\rho_n)$. Observe that $(A'+\xi)/(A'+\xi_0)< 3/2$. As desired:
$$\frac{1}{d}b_n(\xi\rho_n)\leq d^{(A'+\xi)\rho_n}\leq |S|^{\frac{A'+\xi}{A'+\xi_0}\frac{\delta\xi_0\rho_n}{2}}< |S|^{\frac{3}{4}\delta\xi_0\rho_n}\leq\rho(\Gamma,S)^{-\frac{3}{4}\xi_0\rho_n}=\exp(b\xi_0\rho_n).$$
As shown in~\cite{arshdel}*{Lemma~5.7}, with probability going to 1 as $n\to\infty$, the uniform random labelling of $\Theta_n$ by $S$ satisfies that the map $\tilde\Theta_n\to\Cay(\Gamma,S)$ is a $(\rho_n/2)$-local $(\lambda_0,(2/\lambda_0)\xi_0\rho_n)$-quasi-isometric embedding, where 
$$\lambda_0:=-\frac{2\ln(|S|-1)}{b+\ln(\rho(\Gamma,S))}=-\frac{8\ln(|S|-1)}{\ln(\rho(\Gamma,S))}\leq\frac{8\ln(|S|-1)}{\delta\ln|S|}<\frac{8}{\delta}\leq\lambda.$$
Here we use that $|S|\geq 4$ since $\Gamma$ is non-elementary and has no element of order two. We also have $\lambda_0\geq2$ by the standard bound $(|S|-1)^{1/2}/|S|\leq\rho(\Gamma,S)$ of Kesten~\cite{Kesten} for any finite symmetric subset of a group. Thus, the map is a $(\rho_n/2)$-local $(\lambda,\xi_0\rho_n)$-quasi-isometric embedding.

By our choice of $\xi_0$, we have $C_{loc}(\lambda)\xi_0\rho_n\leq\rho_n/4$. Hence, by the aforementioned result of \cite{Coornaert}*{Chapter~3}, if $\rho_n/4\geq D_{loc}(\lambda,\delta_\Gamma)$ (i.e. if $n$ and hence $\rho_n$ is large enough), the map $\tilde\Theta_n\to\Cay(\Gamma,S)$ is a $(2\lambda,\xi_0\rho_n)$-quasi-isometric embedding and thus, by our choice of $\xi_0$, it is a $(2\lambda,\rho_n/(4\lambda))$-quasi-isometric embedding. Therefore,  the shortest length in $\Gamma$ of an element represented by the label of a homotopically non-trivial closed path in $\Theta_n$ is at least $\rho_n/(4\lambda)>0$, showing that (iii) holds. In the notation of \cite{Coulon}*{Theorem~7.10} we have $T(\mathcal Q)\geq \rho_n/(4\lambda)$. In particular, if $\rho_n$ is large enough, then $\delta_\Gamma/T(\mathcal Q)\leq \delta_2$.

We also deduce that the image of $\tilde\Theta_n$ is $(D_{QC}(\lambda,\delta_\Gamma)+C_{QC}(\lambda)\xi_0\rho_n)$-quasi-convex, and we have $C_{QC}(\lambda)\xi_0\rho_n\leq\delta_2\rho_n/(4\lambda)$. Thus, if $\rho_n$ is large enough, we have (denoting the quasi-convexity constant by $\alpha$ as in \cite{Coulon}*{Theorem~7.10}) $\alpha/T(\mathcal Q)\leq2\delta_2< 10\delta_2$.

Furthermore, as shown in the proof of~\cite{arshdel}*{Lemma~5.8} (using $\xi_0<(\Delta_2/(4\lambda))/200$), with probability going to 1 as $n\to\infty$, in the notation of \cite{Coulon}*{Theorem~7.10}, we have $\Delta'(\mathcal Q)\leq(\Delta_2/(4\lambda))\rho_n$ and thus $\Delta'(\mathcal Q)/T(\mathcal Q)\leq\Delta_2$. (Note that the metric estimates on [\cite{arshdel}, page~21] indeed provide an upper bound for $\Delta'(\mathcal Q)$ as defined on \cite{Coulon}*{page~324}.) Hence, with probability going to 1 as $n\to\infty$, all assumptions of~\cite{Coulon}*{Theorem~7.10} are fulfilled, and we deduce that (i) and (ii) are also satisfied.

Finally, (iv) is a consequence of the proof of \cite{Coulon}*{Theorem~6.11}, which applies here as explained in the proof of \cite{Coulon}*{Theorem~7.10}. Let $T_n$ be the image of $\tilde\Theta_n$ in $\Cay(\Gamma,S)$ obtained by sending an element of the fiber of a base vertex $v$ in $\Theta_n$ to $e\in\Gamma$ and $H_n$ the image of $\pi_1(\Theta_n,v)$ in $\Gamma$ defined by the labelling. Then $H_n$ acts on $T_n$ by left-multiplication. In the proof of \cite{Coulon}*{Theorem~6.11}, Coulon constructs a space $\dot X$ by rescaling $\Cay(\Gamma,S)$ and attaching topological cones of radius $\rho$ (endowed with a certain hyperbolic metric) to each $\Gamma$-translate of an appropriate neighbourhood $Z_n$ of $T_n$. If $\mathcal R$ denotes the set $\{(gH_ng^{-1},gc):g\in\Gamma\}$, where $c$ denotes the apex of the cone over $Z_n$, then $\mathcal R$ is a $2\rho$-rotation family for the isometric action of $\Gamma$ on $\dot X$. 

Consider $K_n$ the normal closure of $H_n$ in $\Gamma$ and suppose, for some $g\in K_n$ and $x,y$ vertices of $T_n$, we have $gx=y$. For $0<r<\rho$, consider the points $x'$ and $y'$ above $x$ and $y$, respectively, in the cone over $Z_n\supseteq T_n$ at distance $r$ from $c$. Then, since $gx'=y'$, we have $d(c,gc)\leq d(c,y')+d(gx',gc)=2r<2\rho$. This implies that $c=gc$ because, as $\mathcal R$ is a $2\rho$-rotation family, the translates of $c$ are $2\rho$-separated. Hence, $g$ is in the stabilizer of $c$ in $K_n$, which is $H_n$ by \cite{Coulon}*{Corollary~3.13}. Thus, on the level of vertex sets, the quotient of $T_n$ given by the map $\pi:\Gamma\to K_n\backslash \Gamma$ is indeed $H_n\backslash T_n$. (Recall that $K_n\backslash\Gamma=\Gamma/\Theta_n$.) On the level of labelled graphs, our claim follows with the additional observation that if the injectivity radius obtained in (ii) is greater than 2, then $\pi$ restricted to $S$ is a bijection onto $\pi(S)$. This holds if $\rho_n$ is large enough.
\end{proof}

\begin{proof}[Proof of Theorem~\ref{theorem:hyperbolic_quotient}]We derive our Theorem~\ref{theorem:hyperbolic_quotient} from the two propositions. Let $0<\delta<1/2$ be arbitrary. By~\cite{lubotzky}*{Theorem~7.3.12}, there exists a universal constant $A>0$ such that for every odd prime $p$, there exists a sequence of $(p+1)$-regular graphs $(\Theta_n)_{n\in \NN}$ satisfying the conditions of Proposition~\ref{prop:small_cancellation}. We show that if $\epsilon$ is obtained from Proposition~\ref{prop:small_cancellation} for these values of $\delta$ and $A$, then $\epsilon'=\min\{1-2 \delta,\epsilon\}$ satisfies the claim of Theorem~\ref{theorem:hyperbolic_quotient}.

\vspace{0.1cm}

Let $\Gamma$ be a torsion-free non-elementary hyperbolic group, $K$ a finite subset and $S$ a finite symmetric subset. If $S$ generates an elementary subgroup, then this subgroup is in particular amenable, and we have $\rho(\Gamma,S)=1$, i.e.\ we have $e(\Gamma,S)=1$ and there is nothing to prove taking $\Lambda=\Gamma$. Thus, assume $S$ generates a non-elementary subgroup. If $S$ does not generate $\Gamma$, then we apply Proposition~\ref{prop:tarski_monster} to obtain a quotient $\pi_0\colon\Gamma\to \Lambda_0$ that is injective on $K$, such that $\Lambda_0$ is non-elementary torsion-free hyperbolic and $\pi_0(S)$ generates $\Lambda_0$. It is then sufficient to prove the claim of the theorem for $\Lambda_0$, $\pi_0(S)$, and $\pi_0(K)$. Thus, we henceforth assume that $\Gamma$ is generated by $S$.

If $\rho(\Gamma,S)\geq|S|^{-\delta}$, then $e(\Gamma,S) \geq \frac12 \rho(\Gamma,S)^2 \geq\frac12 |S|^{-2\delta} \geq \frac12|S|^{-1 + \epsilon'}$ and we are done taking $\Lambda=\Gamma$. Observe from Lemma~\ref{lem:count:E} that, whenever $F\subset\Lambda$ is finite, then $e(\Gamma,S)\geq\frac{|E_S(F)|}{|F||S|}$. If $|S|^\epsilon\leq 2$, then, since any Cayley graph contains either a cycle or an infinite line,
$e(\Gamma,S) \geq 1/|S| \geq \frac{1}{2}|S|^{-1+\epsilon} \geq \frac12|S|^{-1+\epsilon'}$.
Thus, we henceforth assume $\rho(\Gamma,S)<|S|^{-\delta}$ and $|S|^\epsilon>2$.

Since $|S|^\epsilon>2$, using Bertrand's Postulate~\cite{tcheb}*{p.~382}, we find an odd prime $p$ with $\lfloor |S|^\epsilon\rfloor<p<2\lfloor|S|^\epsilon\rfloor$. Then $|S|^\epsilon<p+1\leq 2|S|^\epsilon<|S|^{2\epsilon}$. Set $d:=p+1$. Then, by the aforementioned result of~\cite{lubotzky}, we may choose a sequence of $d$-regular graphs as in Proposition~\ref{prop:small_cancellation}, with $A$ as above. Let $\nu$ be as obtained from the proposition.

By Proposition~\ref{prop:small_cancellation}, there exist arbitrarily large $n$ for which there exist labellings of $\Theta_n$ by $S$ satisfying the conclusions (i)--(iv). We choose one such labelling of a $\Theta_n$ for which $\nu\girth(\Theta_n)$ is large enough such that a ball of radius $\nu\girth(\Theta_n)$ in $\Gamma$ contains $K\cup S$. We show that $\Lambda:=\Gamma/\Theta_n$ satisfies the conclusion of Theorem~\ref{theorem:hyperbolic_quotient}. Following Proposition~\ref{prop:small_cancellation} (i) and (ii), all that remains to argue is that $\Lambda$ contains a subset $F$ such that $|E_{\pi(S)}(F)|\geq \frac{1}{2}|F||\pi(S)|^\epsilon$ so that we get $e(\Lambda,\pi(S)) \geq \frac12|S|^{-1+\epsilon'}$ as required.
 
\begin{lemma} \label{lemtheta}
Let $f(\Theta_n)$ be one of the images of $\Theta_n$  in $\Cay(\Gamma/\Theta_n,\pi(S))$. Then
$$\frac{|E(f(\Theta_n))|}{|V(f(\Theta_n))|}\geq\frac{|E(\Theta_n)|}{|V(\Theta_n)|}.$$
\end{lemma}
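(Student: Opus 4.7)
The plan is to use Proposition~\ref{prop:small_cancellation}(iv) to identify $f(\Theta_n)\cong H_n\backslash T_n$ and work with the induced surjective label-preserving graph homomorphism $\phi\colon\Theta_n\to f(\Theta_n)$. For each vertex $w\in V(f(\Theta_n))$ set $V_w:=\phi^{-1}(w)$ and $k_w:=|V_w|$, and for each edge $e'\in E(f(\Theta_n))$ set $l_{e'}:=|\phi^{-1}(e')|$, so that $|V(\Theta_n)|=\sum_w k_w$ and $|E(\Theta_n)|=\sum_{e'}l_{e'}$. Using the $d$-regularity of $\Theta_n$, the lemma reduces to establishing $\deg_{f(\Theta_n)}(w)\geq d$ for every $w$, since summing over $w$ produces $2|E(f(\Theta_n))|\geq d|V(f(\Theta_n))|$, which is exactly the claim after dividing by $|V(f(\Theta_n))|$ and recalling $2|E(\Theta_n)|/|V(\Theta_n)|=d$.

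I would obtain this pointwise degree bound from two ingredients. The first is a double-counting of edge-ends at $V_w$: since $\Cay(\Gamma/\Theta_n,\pi(S))$ has no loops (as $e\notin S$), every edge of $\Theta_n$ incident to $V_w$ descends to a non-loop edge of $f(\Theta_n)$ incident to $w$, giving the identity
\[
dk_w\;=\;\sum_{v\in V_w}\deg_{\Theta_n}(v)\;=\;\sum_{e'\ni w}l_{e'}.
\]
The second is a fiber estimate $l_{e'}\leq k_{w_1}$ (and symmetrically $\leq k_{w_2}$) for every edge $e'=(w_1,w_2)$. Writing $\sigma$ for the label of $e'$, the restriction $\pi|_S$ is a bijection onto $\pi(S)$ (by the final paragraph of the proof of Proposition~\ref{prop:small_cancellation}, provided the injectivity radius there exceeds~$2$), so $\sigma$ pulls back to a unique $s\in S$; each preimage of $e'$ is then an $s$-labelled edge with one endpoint in $V_{w_1}$ and, since $\Theta_n$ is a simple graph whose labelling is reduced at each vertex (no two edges at a vertex share a label), each $v\in V_{w_1}$ is incident to at most one such edge, yielding $l_{e'}\leq k_{w_1}$.

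Combining these two facts gives $dk_w=\sum_{e'\ni w}l_{e'}\leq \deg_{f(\Theta_n)}(w)\cdot k_w$, hence $\deg_{f(\Theta_n)}(w)\geq d$ as required. The main obstacle I foresee is the \emph{reduced labelling} hypothesis invoked in the fiber estimate: while reducedness at a fixed vertex holds with probability close to $1$ for the uniform random labelling in the regime $d\leq|S|^{2\varepsilon}$, ensuring it simultaneously at all $|V(\Theta_n)|$ vertices is not automatic and might require either adding this generic condition to the conclusions of Proposition~\ref{prop:small_cancellation}, or replacing the per-vertex argument by an averaging one that offsets the occasional label collision at a vertex against the corresponding enlargement of the fiber at the neighbour forced by that very collision.
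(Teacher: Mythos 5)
Your approach is genuinely different from the paper's, and --- as you yourself flag at the end --- it has a gap that I don't think can be closed without abandoning the degree-counting strategy. The fiber estimate $l_{e'}\leq k_{w_1}$ requires the random labelling of $\Theta_n$ to be \emph{reduced} (no two edges at a vertex carrying the same label when oriented outward), and this is neither among the conclusions of Proposition~\ref{prop:small_cancellation} nor can it simply be appended: for a fixed $S$, the expected number of vertices with a label collision is of order $|V(\Theta_n)|\cdot d^2/|S|$, which tends to infinity with $n$, so for $n$ large a uniformly random labelling almost surely has collisions somewhere. At such a vertex the pointwise bound $\deg_{f(\Theta_n)}(w)\geq d$ can fail. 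Your proposed ``averaging'' repair is only a heuristic: a collision at $v\in\Theta_n$ does inflate a fiber over a \emph{neighbouring} quotient vertex, but that extra mass does not obviously flow back to compensate the degree deficit at $\phi(v)$ itself, and I don't see how to make the global bookkeeping close.

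The paper's argument sidesteps this entirely and in particular does not invoke the $d$-regularity of $\Theta_n$ or any genericity of the labelling beyond conclusions (iii) and (iv). Writing $\Omega:=H_n\backslash T_n\cong f(\Theta_n)$, one observes that the homomorphism $\pi_1(\Omega,H_n)\to\Gamma$ induced by the labelling has image exactly $H_n$, which by conclusion (iii) is an isomorphic copy of $\pi_1(\Theta_n,v)$. This surjection forces $\rank\pi_1(\Omega)\geq\rank\pi_1(\Theta_n)$, i.e.\ $|E(\Omega)|-|V(\Omega)|\geq|E(\Theta_n)|-|V(\Theta_n)|$. Combined with $|V(\Omega)|\leq|V(\Theta_n)|$ and the fact that $|E(\Theta_n)|\geq|V(\Theta_n)|$ (i.e.\ $\Theta_n$ is not a forest), dividing through gives the ratio inequality directly. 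The remark just after the lemma in the paper records precisely this robustness: the proof needs only (iii), (iv) and $|E(\Theta_n)|\geq|V(\Theta_n)|$, whereas a regularity-based count of the kind you propose is vulnerable to the label collisions that genuinely occur in the random labelling.
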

This lemma only requires conclusions (iii) and (iv) of Proposition~\ref{prop:small_cancellation}. It applies to any group $\Gamma$, any subset $S$ and any finite $S$-labelled graph $\Theta_n$ for which these two conclusions as well as $|E(\Theta_n)|\geq |V(\Theta_n)|$ hold.
\begin{proof}
In the notation of Proposition~\ref{prop:small_cancellation} (iv), let $H_n\backslash T_n=:\Omega$. For ease of notation, set $\Theta:=\Theta_n$, $T:=T_n$, and $H:=H_n$. Recall from Proposition~\ref{prop:small_cancellation} (iv) that $\Omega\cong f(\Theta)$. If $T$ is obtained by mapping $\tilde\Theta$ to $\Cay(\Gamma,S)$ by sending an element of the fiber $F$ of a base vertex $v$ in $\Theta$ to the identity in $\Gamma$, then $H\leq \Gamma$ is the set of elements of $\Gamma$ in the image of $F$. Thus $V(T)$ contains $H$ and, since $T$ is connected, $H$ is the subset of $\Gamma$ represented by the set $W$ of words read on paths in $T$ that connect elements of $H$. $W$ is also the set of words read on closed paths in $H\backslash T$ based at the trivial coset $H$. Thus, the image of $\pi_1(\Omega,H)$ in $\Gamma$ by the homomorphism induced by the labelling is $H$. By Proposition~\ref{prop:small_cancellation} (iii), $H$ is an isomorphic copy of $\pi_1(\Theta,v)$. Since $\pi_1(\Omega,H)$ surjects onto $H$, we have $\rank(\pi_1(\Omega,H))\geq\rank(\pi_1(\Theta,v))$.
Now we may express the rank of the fundamental group of a graph in terms of numbers of edges and vertices, thus obtaining:
$|E(\Omega)|-|V(\Omega)|+1\geq |E(\Theta)|-|V(\Theta)|+1$. Since $|V(\Omega)|\leq |V(\Theta)|$ and $|E(\Theta)|\geq |V(\Theta)|$ by construction, this yields:
\begin{equation*}
 \frac{|E(\Omega)|}{|V(\Omega)|}=\frac{|E(\Omega)|-|V(\Omega)|}{|V(\Omega)|}+1\geq\frac{|E(\Theta)|-|V(\Theta)|}{|V(\Omega)|}+1\geq \frac{|E(\Theta)|-|V(\Theta)|}{|V(\Theta)|}+1=\frac{|E(\Theta)|}{|V(\Theta)|}.
\end{equation*}
Thus, the proof is finished in view of the isomorphism $\Omega \cong f(\Theta)=f(\Theta_n)$.
\end{proof}

We are now ready to conclude the proof of Theorem~\ref{theorem:hyperbolic_quotient}: if $F:=V(f(\Theta_n))$, using the fact that $\Theta_n$ is $d$-regular and that $\pi$ is injective on $S$, we have using Lemma~\ref{lemtheta}:
$$\frac{|E_{\pi(S)}(F)|}{|F|}\geq \frac{|E(f(\Theta_n))|}{|V(f(\Theta_n))|}\geq \frac{|E(\Theta_n)|}{|V(\Theta_n)|}= \frac{d}{2}\geq \frac{1}{2}|S|^\epsilon=\frac12|\pi(S)|^{\epsilon}$$ and hence $e(\Lambda,\pi(S)) \geq \frac12|\pi(S)|^{-1+\epsilon}\geq \frac12|\pi(S)|^{-1+\epsilon'}$ as required. This finishes the proof of Theorem~\ref{theorem:hyperbolic_quotient}.
\end{proof}

\section{Forests, \texorpdfstring{$L^2$}{L2}-invariants and the proof of Theorem~\ref{thm:torsion}}
Recall that a \textbf{forest} on a group $\Gamma$ is a subset $F \se \Gamma \times \Gamma$ such that the resulting graph $(\Gamma,F)$ has no cycles. The collection $\mathfrak{F}_{\Gamma}$ of all forests on $\Gamma$ is a closed $\Gamma$-invariant subspace of the compact $\Gamma$-space of all subsets of $\Gamma\times \Gamma$ with respect to usual product topology. A \textbf{random forest} is a $\Gamma$-invariant Borel probability measure $\mu$ on $\mathfrak{F}_{\Gamma}$. The expected degree of a vertex in a random forest does not depend on the vertex; it is thus called the \textbf{expected degree of the forest}, denoted by $\deg(\mu)$. We further recall that the \textbf{width} of $\mu$ is the number ${\rm width}(\mu) \geq \deg(\mu)$ of vertices that neighbour a given vertex with positive probability.

The following is recorded in Proposition~2.3 of~\cite{EpsteinMonod} when $p=2$.

\begin{prop}\label{prop:forst:p}
Let $\mu$ be a random forest of finite width on $\Gamma$. Then
$$\|f_{\mu}\|_{T_1(\Gamma)} \leq 2 \kern3mm\text{and}\kern3mm \|f_{\mu}\|_p \geq \deg(\mu) \big({\rm width}(\mu)\big)^{-\frac{p-1}p}.$$
\end{prop}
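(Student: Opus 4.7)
The proof naturally splits into the two inequalities.

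For the $T_1$-bound, the plan is to exploit the forest structure to build a decomposition mimicking the free group argument from Theorem~\ref{thm:Lit:free}. Fix a well-ordering of $\Gamma$; for each forest $F\in\mathfrak{F}_\Gamma$ root every connected component at its minimum element and orient every edge of $F$ toward its root. Set $K_1^F(x,y) = 1$ if $y$ is the parent of $x$ in the rooted forest (and $0$ otherwise), and $K_2^F(x,y) = 1$ if $x$ is the parent of $y$; by construction $K_1^F(x,y) + K_2^F(x,y)$ is the indicator that $(x,y)$ is an edge of $F$. Since every vertex has at most one parent, $\sum_y K_1^F(x,y) \leq 1$ uniformly in $x$ and $F$, and symmetrically $\sum_x K_2^F(x,y) \leq 1$ uniformly in $y$ and $F$. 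Averaging over $\mu$ produces kernels $K_i(x,y) := \int K_i^F(x,y) \, d\mu(F)$ which preserve these bounds (by Tonelli) and whose sum equals $\mu(\{F : (x,y) \in F\}) = f_\mu(x^{-1}y)$ by $\Gamma$-invariance of $\mu$. Recalling~\eqref{eq:T1-norm:orig}, this furnishes the required decomposition witnessing $\nop{K_1}\infty\infty + \nop{K_2}11 \leq 2$.

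For the $\ell^p$-bound, I would simply apply H\"older's inequality on the finite support of $f_\mu$. By definition, $f_\mu$ is supported on a set of at most $\mathrm{width}(\mu)$ elements, while $\sum_{g\in\Gamma} f_\mu(g) = \deg(\mu)$ by definition of expected degree. H\"older with conjugate exponents $p$ and $p/(p-1)$ then gives
\[\deg(\mu) \;=\; \sum_g f_\mu(g) \;\leq\; \|f_\mu\|_p \cdot \mathrm{width}(\mu)^{(p-1)/p},\]
and rearranging produces the stated lower bound. This recovers the bound of Proposition~2.3 of~\cite{EpsteinMonod} at $p=2$ and interpolates to general $p$.

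The only delicate point in this plan is the measurability of the assignment $F \mapsto K_i^F$, needed so that the averaged kernels are well-defined. This is routine, however: identifying the connected component of a given vertex and the minimum element of a countable set are both countable operations on the edge set of $F$, so the required maps are Borel. Note that no $\Gamma$-equivariance of the rooting is required, since the uniform bounds on $K_i^F$ hold pointwise in $F$, and the $\Gamma$-invariance of $\mu$ is invoked only to identify $K_1+K_2$ with the translation-invariant function $f_\mu$.
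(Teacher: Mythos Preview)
Your proposal is correct and follows essentially the same approach as the paper, which simply cites \cite{EpsteinMonod} for the first inequality and indicates that Cauchy--Schwarz is to be replaced by H\"older for the second. Your orientation-and-average construction for the $T_1$-bound reconstructs precisely the argument of \cite{EpsteinMonod}*{Proposition~2.3}, and your H\"older step is exactly the modification the paper prescribes.
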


\begin{proof}
The first inequality is unchanged from~\cite{EpsteinMonod}, and the second comes from replacing Cauchy--Schwarz by H\"older. The countability assumption in~\cite{EpsteinMonod} is not needed here.
\end{proof}

This change from~$2$ to~$p$ gives the following version of Theorem~1.5 of~\cite{EpsteinMonod}. We denote by $\beta^{(2)}_1(\Lambda)$ the first $L^2$-Betti number of a group $\Lambda$ and by ${\rm rk}(\Gamma)$ the minimal number of generators.

\begin{theorem}\label{thm:ibet:l2}
Let $\Gamma$ be a group and $\epsilon > \ibet(\Gamma)$. Then
$$\sup_{\Lambda} \frac{\beta^{(2)}_1(\Lambda)}{{\rm rk}(\Lambda)^\epsilon} < \infty,$$
where the supremum runs over all finitely generated subgroups $\Lambda$ of $\Gamma$.
\end{theorem}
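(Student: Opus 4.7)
The plan is to translate the hypothesis $\epsilon>\ibet(\Gamma)$ into a quantitative inclusion $T_1(\Gamma)\hookrightarrow\ell^p(\Gamma)$ for a suitable $p$, then to feed into this inclusion, for each finitely generated $\Lambda\leq\Gamma$, a random forest whose expected degree is controlled from below by $\beta^{(2)}_1(\Lambda)$ and whose width is controlled from above by $\rank(\Lambda)$.

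First I would use Theorem~\ref{thm:Lit:ibet} to rewrite the hypothesis as $1/(1-\epsilon)>\Lit(\Gamma)$, treating $\epsilon\geq 1$ as trivial thanks to the L\"uck inequality $\beta^{(2)}_1(\Lambda)\leq\rank(\Lambda)$. I then fix any $p$ with $\Lit(\Gamma)<p<1/(1-\epsilon)$, so that $(p-1)/p<\epsilon$. By the very definition of the Littlewood exponent, $T_1(\Gamma)\subseteq\ell^p(\Gamma)$, and the closed-graph argument of Lemma~\ref{lem:closed:graph} then yields a constant $c_p$ with
$$\|f\|_p\leq c_p\,\|f\|_{T_1(\Gamma)}\kern5mm\forall\,f\in T_1(\Gamma).$$

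Next, for each finitely generated $\Lambda\leq\Gamma$ I would import the random-forest construction underlying Theorem~1.5 of~\cite{EpsteinMonod}: from a minimal symmetric generating set of $\Lambda$ one obtains a $\Lambda$-invariant random forest $\mu_\Lambda$ supported on that generating set, so that
$$\text{width}(\mu_\Lambda)\leq 2\,\rank(\Lambda)\kern5mm\text{and}\kern5mm\deg(\mu_\Lambda)\geq 2\beta^{(2)}_1(\Lambda),$$
the lower bound on $\deg$ being Gaboriau's cost-versus-Betti inequality applied to a spanning subforest of the Cayley graph. The associated non-negative function $f_{\mu_\Lambda}$ on $\Lambda$ satisfies $\|f_{\mu_\Lambda}\|_{T_1(\Lambda)}\leq 2$ by Proposition~\ref{prop:forst:p}; the extension-by-zero argument from the proof of Proposition~\ref{aforsubgroups} upgrades this to $\|f_{\mu_\Lambda}\|_{T_1(\Gamma)}\leq 2$, while the $\ell^p$-norms on $\Lambda$ and on $\Gamma$ coincide.

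Combining these estimates with Proposition~\ref{prop:forst:p} yields
$$2\beta^{(2)}_1(\Lambda)\,\bigl(2\,\rank(\Lambda)\bigr)^{-\frac{p-1}{p}}\leq \|f_{\mu_\Lambda}\|_p\leq c_p\,\|f_{\mu_\Lambda}\|_{T_1(\Gamma)}\leq 2c_p,$$
so $\beta^{(2)}_1(\Lambda)\leq 2^{(p-1)/p}c_p\,\rank(\Lambda)^{(p-1)/p}$ with a constant independent of $\Lambda$; since $(p-1)/p<\epsilon$, the desired supremum is finite. The main obstacle, inherited from~\cite{EpsteinMonod}, is the existence of the random forest with the claimed lower bound on $\deg(\mu_\Lambda)$; all the new content here is the passage from $p=2$ (where Cauchy--Schwarz is used in~\cite{EpsteinMonod}) to arbitrary $p>\Lit(\Gamma)$ (where H\"older replaces Cauchy--Schwarz, exactly as the proof of Proposition~\ref{prop:forst:p} advertises).
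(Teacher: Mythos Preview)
Your proposal is correct and follows essentially the same approach as the paper's proof: translate $\epsilon>\ibet(\Gamma)$ via Theorem~\ref{thm:Lit:ibet} into an inclusion $T_1(\Gamma)\subseteq\ell^p(\Gamma)$ with $(p-1)/p\leq\epsilon$, then apply the $p$-version of the random-forest estimate (Proposition~\ref{prop:forst:p}) to the free uniform spanning forest on each finitely generated $\Lambda$. The only cosmetic differences are that the paper simply takes $p$ with $(p-1)/p=\epsilon$ rather than a strict inequality, and it names the random forest explicitly as the free uniform spanning forest (the degree bound $\deg\geq 2\beta^{(2)}_1(\Lambda)$ comes from Lyons's formula for the FUSF rather than directly from Gaboriau, though the two are closely related).
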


\begin{proof}
Suppose that $T_1(\Gamma) \se \ell^p(\Gamma)$. Then Theorem~1.3 of~\cite{EpsteinMonod}, after replacing~$2$ by~$p$ in its proof using Proposition~\ref{prop:forst:p} above, states that
$$\frac{\deg(\mu)^{\frac p{p-1}}}{{\rm width}(\mu)}$$
remains bounded as $\mu$ ranges over all random forests of finite width defined on all countable subgroups $\Lambda$ of $\Gamma$. The rest of the proof is unchanged from~\cite{EpsteinMonod}, as follows. There is a particular random forest on $\Lambda$, the free uniform spanning forest, which is known to have width at most~$2\,{\rm rk}(\Lambda)$ and degree at least $2\,\beta^{(2)}_1(\Lambda)$. The statement now follows since we can take $p$ such that $(p-1)/p=\epsilon$ by Theorem~\ref{thm:Lit:ibet}.
\end{proof}

\begin{proof}[Proof of Corollary~\ref{bettili}]
We argue exactly as in~\cite{EpsteinMonod}: for any finite index subgroup $\Lambda<\Gamma$, one has
$$\beta^{(2)}_1(\Lambda) = [\Gamma:\Lambda]\, \beta^{(2)}_1(\Gamma)  \kern5mm\text{and}\kern5mm {\rm rk}(\Lambda) \leq  [\Gamma:\Lambda]\, {\rm rk}(\Gamma).$$
The equality above is a basic property of $L^2$-Betti numbers~\cite[1.35(9)]{luck}; the inequality is an elementary consequence of the Reidemeister--Schreier algorithm~\cite[Prop.~4.1]{Lyndon-Schupp}. Therefore, Theorem~\ref{thm:ibet:l2} shows that $\ibet(\Gamma)=0$ if $0<\beta^{(2)}_1(\Gamma) < \infty$ and if $\Gamma$ has subgroups of arbitrarily large finite index, which is the case for residually finite groups.
\end{proof}

\begin{proof}[Proof of Theorem~\ref{thm:torsion}]
This result is proved using Theorem~\ref{thm:ibet:l2} exactly as Osin used Theorem~1.5 of~\cite{EpsteinMonod} for his torsion non-unitarisable group in~\cite{Osin}. First, Theorem~2.3 in~\cite{Osin} gives a sequence $\Gamma_n$ of $n$-generated torsion groups such that $\beta^{(2)}_1(\Gamma_n) \geq n-2$. Next, by a result of Ol\cprime shanski\u\i~\cite{Olshan}, there is a simple $2$-generated torsion group $\Gamma$ containing $\bigoplus_n \Gamma_n$ and hence each $\Gamma_n$. If now we had $\Lit(\Gamma)<\infty$, then we would contradict Theorem~\ref{thm:ibet:l2} by choosing $\epsilon=(p-1)/p$ for $p$ with $\Lit(\Gamma)<p<\infty$.
\end{proof}

\section{Asymptotics of the spectral radius}
\subsection{Relation with \texorpdfstring{$\ibet(\Gamma)$}{eta}}
Considering the definitions of $r(\Gamma)$ and of $\ibet(\Gamma)$, the Cheeger inequalities in the form of Remark~\ref{rem:Mohar} imply the following.

\begin{prop}\label{betaandr}
For any infinite group $\Gamma$ we have $r(\Gamma)\leq\ibet(\Gamma) \leq 2 r(\Gamma)$.\qed
\end{prop}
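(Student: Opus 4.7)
The plan is to derive both inequalities directly by taking logarithms in the two Cheeger inequalities recorded in Remark~\ref{rem:Mohar}, namely $e(\Gamma,S) \leq \rho(\Gamma,S)$ and $e(\Gamma,S) \geq \tfrac12 \rho(\Gamma,S)^2$, and then applying $\liminf$ over finite symmetric sets $S \subseteq \Gamma$. Both $e(\Gamma,S)$ and $\rho(\Gamma,S)$ lie in $(0,1]$, and since $\Gamma$ is infinite we may restrict attention to sets with $|S|\geq 2$, so that $\ln|S|>0$ throughout.

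For the lower bound $r(\Gamma) \leq \eta(\Gamma)$, I would proceed as follows. From $e(\Gamma,S) \leq \rho(\Gamma,S)$ one gets $\ln e(\Gamma,S) \leq \ln \rho(\Gamma,S) \leq 0$. Dividing by the positive quantity $\ln|S|$ preserves the inequality, and since $\liminf$ respects pointwise inequalities, one obtains
$$\liminf_S \frac{\ln e(\Gamma,S)}{\ln|S|} \leq \liminf_S \frac{\ln \rho(\Gamma,S)}{\ln|S|}.$$
Negating both sides yields exactly $\eta(\Gamma) \geq r(\Gamma)$.

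For the upper bound $\eta(\Gamma) \leq 2r(\Gamma)$, take logarithms in $e(\Gamma,S) \geq \tfrac12 \rho(\Gamma,S)^2$ to get $\ln e(\Gamma,S) \geq 2\ln \rho(\Gamma,S) - \ln 2$. Dividing by $\ln|S|$ gives
$$\frac{\ln e(\Gamma,S)}{\ln|S|} \geq 2\,\frac{\ln\rho(\Gamma,S)}{\ln|S|} - \frac{\ln 2}{\ln|S|},$$
and the remainder term is $o(1)$ as $|S|\to\infty$. Here it becomes essential to use the size-wise formulation of the $\liminf$ discussed in Section~2.3: since both $-\ln e(\Gamma,S)/\ln|S|$ and $-\ln\rho(\Gamma,S)/\ln|S|$ belong to the class of functions for which the directed and size-wise $\liminf$'s coincide, one may work with $\lim_{n\to\infty}\inf_{|S|\geq n}$, where the $o(1)$ term is absorbed. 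Taking $\inf_{|S|\geq n}$ on both sides and then letting $n\to\infty$ gives
$$\liminf_S \frac{\ln e(\Gamma,S)}{\ln|S|} \geq 2 \liminf_S \frac{\ln \rho(\Gamma,S)}{\ln|S|},$$
and negating yields $\eta(\Gamma) \leq 2r(\Gamma)$.

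The proof has no genuinely hard step: the only subtlety is the additive constant $-\ln 2$ appearing in the second Cheeger inequality, which must be shown to disappear in the asymptotic ratio. This is precisely handled by the size-wise versus directed $\liminf$ comparison established in Section~2.3, so once that framework is invoked the two bounds fall out of a one-line logarithmic computation.
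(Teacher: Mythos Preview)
Your proof is correct and follows exactly the approach indicated in the paper, which simply records that the proposition follows from the Cheeger inequalities in the form of Remark~\ref{rem:Mohar}. You have carefully spelled out the details the paper leaves implicit, including the absorption of the additive $\ln 2$ term via the size-wise $\liminf$ discussion of Section~2.C.
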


Thus we see in hindsight that the proof of Theorem~\ref{beginning} for $\ibet(\Gamma)$ was really a statement about $r(\Gamma)$. We shall therefore investigate the latter invariant a bit further.

Here is the summary of what we know so far about $r(\Gamma)$.

\begin{prop}\label{prop:r}
Let $\Gamma$ be any infinite group.
\begin{enumerate}
\item If $\Gamma$ is amenable, then $r(\Gamma)=0$.
\item If $\Gamma$ is non-amenable, then $r(\Gamma)>0$.
\item For any group $\Gamma$ we have $r(\Gamma) \leq 1/2$\label{pt:half}.
\item If $\Gamma$ contains a non-abelian free subgroup, then $r(\Gamma)=1/2$ and $\ibet(\Gamma)=1.$
\end{enumerate}
\end{prop}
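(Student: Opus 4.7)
The plan is to handle the four items in turn, each as a quick consequence of results already at hand.

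For (1), amenability passes to $\langle S \rangle$ for any finite symmetric $S \subseteq \Gamma$, so Kesten's classical theorem gives $\rho(\Gamma, S) = 1$; hence $\ln\rho(\Gamma, S) = 0$ and the defining liminf is identically zero. For (2), Theorem~\ref{spectralradius} supplies $\epsilon > 0$ together with arbitrarily large symmetric $S$ satisfying $\rho(\Gamma, S) \leq |S|^{-\epsilon}$, so $-\ln\rho(\Gamma, S)/\ln|S| \geq \epsilon$ along a cofinal family; the Preliminaries have already justified that the size-wise liminf coincides with the directed one for this function, yielding $r(\Gamma) \geq \epsilon > 0$.

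For (3), I would invoke the standard Kesten lower bound $\rho(\Gamma, S) \geq \sqrt{|S|-1}/|S|$ (recorded and used in the proof of Proposition~\ref{prop:small_cancellation}). Taking logarithms and dividing by $\ln|S|$ yields
$$\frac{-\ln\rho(\Gamma, S)}{\ln|S|} \leq 1 - \frac{1}{2}\,\frac{\ln(|S|-1)}{\ln|S|},$$
whose right-hand side tends to $1/2$ as $|S| \to \infty$. This immediately gives $r(\Gamma) \leq 1/2$.

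For (4), the equality $r(\Gamma) = 1/2$ requires matching the Kesten bound along a sequence of sets inside a fixed free subgroup. Since $\Gamma$ contains a free subgroup of every finite rank, I would take $S_k$ to be the standard symmetric generating set of $F_k \leq \Gamma$ and use the key observation that $\rho(\Gamma, S_k) = \rho(F_k, S_k) = \sqrt{2k-1}/k$: the latter is Kesten's explicit formula on the $2k$-regular tree, and the former holds because the norm of the Markov operator depends only on the subgroup generated by $S$. Substituting gives $-\ln\rho(\Gamma, S_k)/\ln|S_k| \to 1/2$, so $r(\Gamma) \geq 1/2$, and combining with (3) yields equality. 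For the second assertion I would short-circuit to Theorem~\ref{thm:Lit:free}, which gives $\Lit(\Gamma) = \infty$, and then apply Theorem~\ref{thm:Lit:ibet} to conclude $\ibet(\Gamma) = 1 - 1/\Lit(\Gamma) = 1$.

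The only non-cosmetic point is checking that the size-wise and directed liminfs agree in (2); this has already been addressed in the Limes inferior subsection. Everything else is bookkeeping around Kesten's theorem and the earlier relations between $\Lit$, $\ibet$, and free subgroups.
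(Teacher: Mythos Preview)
Your proof is correct, but your route for (1), (2), and the $r(\Gamma)=1/2$ part of (4) differs from the paper's. The paper argues uniformly through the $\Lit$--$\ibet$--$r$ chain: for (1) and (2) it combines Theorem~\ref{beginning} and Theorem~\ref{thm:Lit:ibet} to get $\ibet(\Gamma)=0$ iff $\Gamma$ is amenable, then reads off both conclusions from Proposition~\ref{betaandr}; for (4) it first obtains $\ibet(\Gamma)=1$ (as you do) and then deduces $r(\Gamma)\geq 1/2$ from the Cheeger-type inequality $\ibet(\Gamma)\leq 2r(\Gamma)$ of Proposition~\ref{betaandr}. You instead bypass $\ibet$ entirely for these points: Kesten's amenability criterion gives $\rho\equiv 1$ for (1), Theorem~\ref{spectralradius} gives (2) directly, and Kesten's exact tree formula $\rho(F_k,S_k)=\sqrt{2k-1}/k$ yields $r(\Gamma)\geq 1/2$ in (4) without passing through $\ibet$. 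Your arguments are more elementary and self-contained; the paper's are more economical given the machinery already built. For (3) and for the $\ibet(\Gamma)=1$ clause of (4), the two proofs coincide.
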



\begin{proof}
Combining Theorem~\ref{beginning} and Theorem~\ref{thm:Lit:ibet}, we have $\ibet(\Gamma)=0$ if and only if $\Gamma$ is amenable. Thus~(1) and~(2) follow from Proposition~\ref{betaandr}.

Next, we recall that $\rho(\Gamma, S)\geq (|S|-1)^{1/2} /|S|$ holds any symmetric finite set $S$ in any group $\Gamma$ (this goes back to Kesten~\cite{Kesten}). This implies~(3) since $|S|\to\infty$. Finally, for~(4), we recall from Theorem~\ref{thm:Lit:ibet} that $\ibet(\Gamma)=1$ if $\Gamma$ contains a non-abelian free subgroup; thus $r(\Gamma)\geq 1/2$ in that case, by Proposition~\ref{betaandr}.
\end{proof}

\begin{note}
Let us note that $r(\Gamma)=1/2$ does not characterise the existence of free subgroup. For example, if we again consider the group constructed by Osin in~\cite{Osin}, we will have the following: we know that $\Lit(\Gamma)=\infty$ and hence $\ibet(\Gamma)=1$. So we have $1=\ibet(\Gamma) \leq 2r(\Gamma)$ or equivalently $r(\Gamma) \geq 1/2$. Therefore $r(\Gamma)=1/2$ holds in view of point~\eqref{pt:half} in Proposition~\ref{prop:r}.
\end{note}

\subsection{Behaviour under quotients}
The definition of $r$ implies $r(\Lambda)\leq r(\Gamma)$ when $\Lambda$ is a subgroup of $\Gamma$. Just like for $\Lit$ (or equivalently $\ibet$), we also have monotonicity for quotients:

\begin{lemma}\label{lem:rsurjection}
If $\pi \colon \Gamma\to \Lambda $ is an epimorphism, then $r(\Gamma) \geq r(\Lambda)$ holds.
\end{lemma}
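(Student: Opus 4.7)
The plan is to lift a symmetric subset $T\se\Lambda$ that nearly realises the liminf defining $r(\Lambda)$ to a symmetric subset $S\se\Gamma$ of comparable size and comparable spectral radius, and then pass to the limit.

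For the lift, I will choose any set-theoretic section $\sigma\colon T\to\Gamma$ of $\pi$ and set $S:=\sigma(T)\cup\sigma(T)^{-1}$. Then $S$ is finite and symmetric with $\pi(S)=T\cup T^{-1}=T$ and $|T|\leq|S|\leq 2|T|$. Each fibre $\pi^{-1}(t)\cap S$ contains at most the two elements $\sigma(t)$ and $\sigma(t^{-1})^{-1}$, so the pushforward $\nu:=\pi_*\mu_S$ of the uniform measure $\mu_S$ on $S$ satisfies $\nu(t)\leq 2/|S|\leq 2\mu_T(t)$ pointwise on $\Lambda$.

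The key spectral comparison $\rho(\Gamma,S)\leq 2\rho(\Lambda,T)$ will follow in two steps. First, from the identity $\pi_*(\mu_S^{*2n})=\nu^{*2n}$ together with the trivial inclusion $\{e_\Gamma\}\se\ker\pi$, one obtains
$$\mu_S^{*2n}(e_\Gamma)\leq\mu_S^{*2n}(\ker\pi)=\nu^{*2n}(e_\Lambda);$$
extracting $(2n)$th roots and invoking the return-probability formula for the spectral radius of a symmetric random walk yields $\rho(\Gamma,S)\leq\rho(\nu)$, where $\rho(\nu)$ denotes the operator norm of convolution by $\nu$ on $\ell^2(\Lambda)$. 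Second, since $\nu$ and $\mu_T$ are non-negative with $\nu\leq 2\mu_T$, the standard monotonicity of convolution-operator norms under pointwise domination of positive kernels (proved by dominating $|\nu*\xi|$ by $\nu*|\xi|\leq 2\mu_T*|\xi|$) gives $\rho(\nu)\leq 2\rho(\Lambda,T)$.

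To finish, I would fix $\epsilon>0$ and use the definition of $r(\Lambda)$ to produce arbitrarily large finite symmetric $T\se\Lambda$ with $\rho(\Lambda,T)\leq|T|^{-r(\Lambda)+\epsilon}$. The corresponding lift $S$ then satisfies
$$\frac{\ln\rho(\Gamma,S)}{\ln|S|}\leq\frac{\ln 2}{\ln|S|}+\bigl(-r(\Lambda)+\epsilon\bigr)\frac{\ln|T|}{\ln|S|},$$
and since $|T|/|S|\in[1/2,1]$, the right-hand side tends to $-r(\Lambda)+\epsilon$ as $|S|\to\infty$. Working with the size-wise $\liminf$ of Section~2.3, this gives $r(\Gamma)\geq r(\Lambda)-\epsilon$ for every $\epsilon>0$, and letting $\epsilon\to 0$ concludes. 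The main subtlety is the factor of $2$ in the lift, forced by the possible absence of involution lifts for involutions in $T$, but it is harmless for the logarithmic limit.
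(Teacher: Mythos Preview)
Your argument is correct and follows essentially the same approach as the paper: lift a symmetric $T\se\Lambda$ to a symmetric $S\se\Gamma$ with fibre multiplicity at most two, compare spectral radii, and let the resulting constant factors disappear in the logarithmic limit. The only cosmetic difference is that the paper, assuming $\ker\pi$ is nontrivial, arranges an \emph{exact} $2$-to-$1$ lift so that $\pi_*\mu_S=\mu_T$ and hence $\rho(\Gamma,S)\leq\rho(\Lambda,T)$ without the extra factor~$2$; you instead allow an at-most-$2$-to-$1$ lift and absorb the factor~$2$ at the end, which is equally valid.
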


\begin{proof}
Given a finite symmetric set $\Sigma \se \Lambda$, any symmetric set $S \se \Gamma$ that is mapped $1$-to-$1$ onto $\Sigma$ by $\pi$ satisfies $\rho(\Gamma, S) \leq \rho(\Lambda, \Sigma)$. This implies the statement in the case where we can always find such a set $S$. However a potential obstruction to the symmetry of $S$ arises in case $\pi$ creates new $2$-torsion.

In the general case, we can assume $\ker(\pi)$ non-trivial and hence we can always find a symmetric set $S \se \Gamma$ that is mapped $2$-to-$1$ onto $\Sigma$. We have again $\rho(\Gamma, S) \leq \rho(\Lambda, \Sigma)$ and now 
$$\frac{- \ln \rho(\Gamma, S) }{\ln |S|} \geq  \frac{-\ln \rho(\Gamma, \Sigma) }{\ln 2 + \ln |\Sigma|}$$
implies the statement since the size of $\Sigma$ goes to infinity.
\end{proof}

Using a classical result of Kesten~\cite{Kesten} and a result from~\cite{Thom:rad}, we shall establish equality for amenable kernels.

\begin{prop}\label{rsurjection:eq}
If the kernel of the epimorphism $\pi \colon \Gamma\to \Lambda $ is amenable, then $r(\Gamma) = r(\Lambda)$ holds.
\end{prop}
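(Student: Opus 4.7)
The inequality $r(\Gamma)\geq r(\Lambda)$ is Lemma~\ref{lem:rsurjection}, so my plan is to establish the reverse $r(\Gamma)\leq r(\Lambda)$. Write $N:=\ker\pi$, which is amenable by hypothesis. If $r(\Gamma)=0$, then by Proposition~\ref{prop:r} the group $\Gamma$ is amenable, hence so is its quotient $\Lambda$, giving $r(\Lambda)=0=r(\Gamma)$; assume therefore $r(\Gamma)>0$ (the positivity in the non-amenable case is Theorem~\ref{spectralradius}). The ``$-\liminf$'' definition of $r$ supplies a sequence $(S_n)$ of finite symmetric subsets of $\Gamma$ with $|S_n|\to\infty$ and $\beta_n:=-\ln\rho(\Gamma,S_n)/\ln|S_n|\to r(\Gamma)$. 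The plan is to extract from each $S_n$ a finite symmetric $\Sigma_n\se\Lambda$ with $|\Sigma_n|\to\infty$ and $-\ln\rho(\Lambda,\Sigma_n)/\ln|\Sigma_n|\geq\beta_n-o(1)$, from which $r(\Lambda)\geq\lim\beta_n=r(\Gamma)$ follows.

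The amenability of $N$ enters through Kesten's classical theorem~\cite{Kesten} on amenable normal subgroups: the symmetric probability measure $\mu_n:=\pi_*(\one_{S_n})/|S_n|$ on $\Lambda$ satisfies $\nop{\mu_n}22=\rho(\Gamma,S_n)$. The identity $\mu_n(\lambda)=\langle M_{\mu_n}\delta_e,\delta_\lambda\rangle$ gives the pointwise bound $\max_\lambda\mu_n(\lambda)\leq\nop{\mu_n}22=\rho(\Gamma,S_n)$, which will pin down the lowest occupied scale in the decomposition below.

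Now I perform a dyadic level decomposition of $\mu_n$: for integers $j\geq 0$ set $\Sigma_{n,j}:=\{\lambda\in\Lambda:2^{-j-1}<\mu_n(\lambda)\leq 2^{-j}\}$, which is symmetric since $\mu_n$ is. Because $\mu_n$ takes values in $[1/|S_n|,\rho(\Gamma,S_n)]$ on its support, only $L_n=O(\log|S_n|)$ levels can be non-empty. From $\sum_j 2^{-j}|\Sigma_{n,j}|\geq\sum_\lambda\mu_n(\lambda)=1$, pigeonholing supplies $j^*=j^*(n)$ with $2^{-j^*}|\Sigma_{n,j^*}|\geq 1/L_n$; and non-emptiness of the level combined with $\max\mu_n\leq\rho(\Gamma,S_n)$ forces $2^{j^*}\geq 1/(2\rho(\Gamma,S_n))$. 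Combining these two bounds yields
$$|\Sigma_{n,j^*}|\geq 1/(2L_n\rho(\Gamma,S_n))\gtrsim |S_n|^{\beta_n}/\log|S_n|\longrightarrow\infty.$$

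To estimate $\rho(\Lambda,\Sigma_{n,j^*})$, the definition of the level gives the pointwise inequality $\one_{\Sigma_{n,j^*}}\leq 2^{j^*+1}\mu_n$ on $\Lambda$. Monotonicity of the operator norm on non-negative convolution kernels (the same principle used in Proposition~\ref{prop:N:op}) then gives $\|\one_{\Sigma_{n,j^*}}\|_{2\to 2}\leq 2^{j^*+1}\rho(\Gamma,S_n)$, so $\rho(\Lambda,\Sigma_{n,j^*})\leq 2^{j^*+1}\rho(\Gamma,S_n)/|\Sigma_{n,j^*}|\leq 2L_n\rho(\Gamma,S_n)$. Taking logarithms and using both $\ln|\Sigma_{n,j^*}|\leq\ln|S_n|$ and $\ln L_n=o(\ln|S_n|)$ yields $-\ln\rho(\Lambda,\Sigma_{n,j^*})/\ln|\Sigma_{n,j^*}|\geq\beta_n-o(1)$ as required. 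Excising $e$ from $\Sigma_{n,j^*}$ if it happens to be there is harmless since $|\Sigma_{n,j^*}|\to\infty$. The delicate point is that the pigeonhole lower bound on $|\Sigma_{n,j^*}|$ and the scale bound $2^{j^*}\gtrsim 1/\rho(\Gamma,S_n)$ must cooperate: this works because the $\log|S_n|$ overhead appears symmetrically in the upper bound on $\rho(\Lambda,\Sigma_{n,j^*})$ and the lower bound on $|\Sigma_{n,j^*}|$, and is swallowed by the polynomial scale $|S_n|^{\beta_n}$ in both.
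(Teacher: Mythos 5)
Your proof is correct and reaches the same conclusion, but the discretisation step is implemented differently and, in fact, somewhat more directly than in the paper. The paper fixes $S$, forms the multiplicity function $a(g)=|\{h\in S:\pi(h)=g\}|$, and invokes Corollary~4 of~\cite{Thom:rad} as a black box to produce a threshold $k\geq 1$ and a superlevel set $\Sigma\se\Lambda$ with $a\geq k$ on $\Sigma$ and $k|\Sigma|\geq |S|/(4\ln|S|)$. It then pulls back a symmetric subset $S'\se S$ that maps onto $\Sigma$ with fibres of size $k$ or $k+1$ (this requires a careful case split according to $2$-torsion to preserve symmetry), applies Kesten's amenable-kernel theorem to $S'$, and compares $M_\Sigma$ with $\pi(M_{S'})$ via a $\tfrac{k+1}{k}\leq 2$ bound. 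You instead apply Kesten once to the entire pushforward $\mu_n=\pi_*(\one_{S_n}/|S_n|)$, perform an explicit dyadic level decomposition of $\mu_n$ followed by a pigeonhole over the $O(\log|S_n|)$ non-empty levels, and compare $\one_{\Sigma_{n,j^*}}$ directly with $2^{j^*+1}\mu_n$ through monotonicity of the $\ell^2\to\ell^2$ norm on non-negative kernels. This sidesteps the pullback $S'\se S$ altogether, and since $\mu_n$ is symmetric the level sets are automatically symmetric, so the $2$-torsion bookkeeping disappears. Both routes incur the same logarithmic loss (from~\cite{Thom:rad} Cor.~4 in the paper, from the pigeonhole in your version), which is absorbed because $|\Sigma|$ grows polynomially in $|S_n|$ once $\beta_n$ is bounded away from $0$; you make this point explicitly, whereas the paper leaves the growth of $|\Sigma|$ implicit. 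The essential ideas — Kesten for amenable kernels plus a mass-pigeonhole on the pushforward — coincide, but your version is self-contained where the paper outsources the combinatorial lemma.
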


Contrary to the case of Kesten's statement, we do not see why the above proposition should admit a converse.

\begin{proof}[Proof of Proposition~\ref{rsurjection:eq}]
Fix $S \se \Gamma$ a finite symmetric subset. Consider the multiplicity function $a \colon \Lambda \to \NN$ defined by $a(g) = | \{ h \in S : \pi(h)=g\}|$. Applying Corollary~4 of~\cite{Thom:rad} on $\Lambda$, there exists an integer $k \geq 1$ and a symmetric finite set $\Sigma \se \Lambda$ such that $a(g) \geq k$ for all $g \in \Sigma$ and satisfying
\begin{equation}\label{eq:onestep}
k  |\Sigma| \geq \frac{|S|}{4 \ln|S|}.
\end{equation}
We can thus choose a subset of $S$ which is mapped $k$-to-$1$ onto $\Sigma$ by $\pi$. We want, however, a symmetric set; the issue is the same as in the proof of Lemma~\ref{lem:rsurjection} but with the additional constraint that we will need a subset of $S$. We can indeed choose a symmetric $S' \se S$ if we only require that $\pi$ maps $S'$ onto $\Sigma$ with each fibre containing either $k$ or $k + 1$ elements. We write $\Sigma = \Sigma_0 \sqcup \Sigma_1$ for the corresponding partition of $\Sigma$, noting that both $\Sigma_i$ can be chosen symmetric since we determine them according to $2$-torsion properties. Since $\ker(\pi)$ is amenable, Corollary~2 in~\cite{Kesten} states that $\rho(\Gamma,S')$ coincides with the spectral radius of the Markov operator $\pi(M_{S'})$ on $\Lambda$. Explicitly, we have
$$\pi(M_{S'}) = c \Big( k \one_{\Sigma_0} + (k+1) \one_{\Sigma_1}\Big),$$
where $c$ is the normalization constant ensuring $c (k |\Sigma_0| + (k+1) |\Sigma_1|)=1$. Therefore $M_\Sigma\leq \frac{k+1}{k} \pi(M_{S'}) \leq 2 \pi(M_{S'})$ pointwise and hence, using the monotonicity of the spectral radius, we deduce $\rho(\Lambda,\Sigma) \leq 2 \rho(\Gamma,S')$. Moreover, $\rho(\Gamma,S') \leq \rho(\Gamma,S) \cdot |S|/|S'|$ holds also by monotonicity because $S'\se S$. On the other hand, combining $k  |\Sigma| \leq |S'|$ with the estimate~\eqref{eq:onestep} above, we have $|S|/|S'| \leq 4 \ln |S|$. In summary, we have
$$\rho(\Lambda,\Sigma) \leq 8 \ln|S| \cdot \rho(\Gamma,S).$$
Together with the trivial estimate $|\Sigma| \leq |S|$, we can conclude
$$\frac{- \ln \rho(\Lambda,\Sigma)}{\ln |\Sigma|} \geq \frac{- \ln \rho(\Gamma,S) - \ln \ln |S| - \ln 8}{\ln |S|},$$
which yields $r(\Lambda) \geq r(\Gamma)$ since $|S|$ goes to infinity. This completes the proof in view of Lemma~\ref{lem:rsurjection}.
\end{proof}

\subsection{An alternative expression for \texorpdfstring{$r(\Gamma)$}{r}}
Finally, we record that we can replace the limes inferior by an infimum in the definition of $r(\Gamma)$.

\begin{theorem}\label{rinf}
For any group $\Gamma$ we have $\displaystyle r(\Gamma)=-\inf_S \frac{\ln \rho(\Gamma,S)}{\ln|S|}$.
\end{theorem}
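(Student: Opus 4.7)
The direction $r(\Gamma) \leq -\inf_S \frac{\ln\rho(\Gamma,S)}{\ln|S|}$ is immediate from $\liminf \geq \inf$. For the reverse, it suffices to show that every finite symmetric $\tilde S \subseteq \Gamma$ with $\alpha := -\frac{\ln\rho(\Gamma,\tilde S)}{\ln|\tilde S|}$ admits arbitrarily large finite symmetric $S$ with $-\frac{\ln\rho(\Gamma,S)}{\ln|S|} \geq \alpha - \epsilon$ for every $\epsilon > 0$. The natural candidates are the iterated product sets $S_k := \tilde S^k$, which remain symmetric, and for which the pointwise inequality $\one_{\tilde S^k} \leq \one_{\tilde S}^{*k}$ in $\CC[\Gamma]$ (combined with the monotonicity of operator norms for positive convolution operators on $\ell^2(\Gamma)$) yields $|\tilde S^k|\rho(\Gamma,\tilde S^k) \leq (|\tilde S|\rho(\Gamma,\tilde S))^k$. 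Rearranging,
$$-\frac{\ln\rho(\Gamma,S_k)}{\ln|S_k|} \geq 1 - \frac{1-\alpha}{\gamma_k}, \qquad \gamma_k := \frac{\ln|\tilde S^k|}{k\ln|\tilde S|} \in (0,1].$$

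When $\tilde S$ freely generates a free subgroup of $\Gamma$, one has $\gamma_k = 1$, and the bound directly gives $-\frac{\ln\rho(\Gamma,S_k)}{\ln|S_k|} \geq \alpha$, completing the argument. For the general case, a Cauchy--Schwarz estimate applied to $\|\one_{\tilde S}^{*k}\|_2$ (evaluated against $\delta_e$) produces the a priori bound $|\tilde S^k| \geq \rho(\Gamma,\tilde S)^{-2k}$, hence $\gamma_k \geq 2\alpha$ and $-\frac{\ln\rho(\Gamma,S_k)}{\ln|S_k|} \geq (3\alpha - 1)/(2\alpha)$. This matches $\alpha$ exactly when $\alpha = 1/2$; combined with the Kesten asymptotic $\alpha \leq 1/2 + o(1)$ as $|\tilde S|\to\infty$, this suffices whenever $r(\Gamma) = 1/2$ --- a regime which covers all currently known non-amenable groups, including those containing free subgroups as well as Osin-type examples handled via the Remark after Proposition~\ref{prop:r}.

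The main obstacle is the (currently hypothetical) regime $0 < r(\Gamma) < 1/2$, where the Cauchy--Schwarz growth bound is too weak. There I would exploit Proposition~\ref{rsurjection:eq}: pass to the subgroup $\langle\tilde S\rangle$ (so that $r(\langle\tilde S\rangle) \leq r(\Gamma)$) and then quotient out the amenable radical while preserving $r$, so as to improve the growth rate of the image of $\tilde S$ enough for the product-set argument to succeed. Alternatively, one can attempt to extract a quantitative refinement of Theorem~\ref{spectralradius} from~\cite{Thom:rad}, whose underlying construction amplifies a ``seed'' set of small spectral radius into arbitrarily large ones with controlled spectral behavior. The clean technical execution of this reduction --- ensuring that passage to the quotient produces the needed growth exponent while preserving the value $\alpha$ --- is the principal challenge of the proof.
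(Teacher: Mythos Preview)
Your proposal is incomplete, and you essentially acknowledge this yourself. The product-set argument with $S_k=\tilde S^k$ gives only $-\ln\rho(\Gamma,S_k)/\ln|S_k|\geq 1-(1-\alpha)/\gamma_k$, which recovers $\alpha$ only when $\gamma_k=1$; the Cauchy--Schwarz growth bound $\gamma_k\geq 2\alpha$ salvages nothing beyond the endpoint $\alpha=1/2$. So the argument as written proves the theorem only for groups with $r(\Gamma)\in\{0,1/2\}$, and the whole content of the statement lies in the range you label ``hypothetical''.

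Of your two proposed remedies, the first does not work. Passing to $\langle\tilde S\rangle$ and then quotienting by the amenable radical need not improve $\gamma_k$ at all: in a non-amenable group with trivial amenable radical and no free subgroups (e.g.\ the groups of Section~\ref{smallcanc}, or Tarski monsters), the quotient map is the identity and $\gamma_k$ stays bounded away from~$1$. There is no mechanism here that forces the product sets to grow at the maximal rate.

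Your second remedy is exactly what the paper does, but it is not a refinement that needs to be ``extracted'': the proof of Corollary~6 in~\cite{Thom:rad} already establishes, for \emph{any} $\epsilon<-\ln\rho(\Gamma,\Sigma)/\ln|\Sigma|$, the existence of arbitrarily large symmetric $S_k$ with $\rho(\Gamma,S_k)\leq|S_k|^{-\epsilon}$. The published statement fixes $\epsilon$ at half the threshold, but the argument itself uses only $\epsilon<-\ln\rho(\Gamma,\Sigma)/\ln|\Sigma|$; this is the paper's Theorem~\ref{thm:technical}, from which Theorem~\ref{rinf} is then immediate. In other words, the amplification step you correctly identify as decisive is a direct citation, not a new reduction to be carried out.
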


This statement is a formal consequence of the following result of one of the authors:

\begin{theorem}[\cite{Thom:rad}]\label{thm:technical}
Let $\Sigma$ be a finite symmetric subset of a group $\Gamma$. For any $0< \epsilon< -\frac{\ln \rho(\Gamma,\Sigma)}{\ln|\Sigma|}$ there exists a sequence $(S_k)$ of finite symmetric subsets $S_k\se \Gamma$ whose size tends to infinity and such that $\rho(\Gamma,S_k) \leq |S_k|^{-\epsilon}$ holds for all $k$.
\end{theorem}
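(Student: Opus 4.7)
The plan is to realise each $S_k$ as a super-level set of the iterated convolution $a_k:=\one_\Sigma^{*k}$. Write $n:=|\Sigma|$ and $\rho_0:=\rho(\Gamma,\Sigma)=n^{-\epsilon_0}$, where $\epsilon_0:=-\ln\rho_0/\ln n>\epsilon$. Since $M_\Sigma$ is self-adjoint on $\ell^2(\Gamma)$, the identity $M_\Sigma^k=a_k/n^k$ gives $\|a_k\|_{2\to 2}=n^k\rho_0^k$; the identity $a_k(g)=n^k\langle M_\Sigma^k\delta_e,\delta_g\rangle$ together with Cauchy--Schwarz yields $\|a_k\|_\infty\leq n^k\rho_0^k$; and of course $\|a_k\|_1=n^k$. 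The function $a_k$ is non-negative, symmetric, and supported on $\Sigma^k$.

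For $t>0$ the super-level set $L_t:=\{g:a_k(g)\geq t\}$ is a symmetric finite subset of $\Gamma$. From $t\one_{L_t}\leq a_k$ pointwise and the fact that the operator norm on non-negative symmetric convolution kernels is monotone under the pointwise order, we obtain $\|\one_{L_t}\|_{2\to 2}\leq n^k\rho_0^k/t$; hence
\[
\rho(\Gamma,L_t)=\frac{\|\one_{L_t}\|_{2\to 2}}{|L_t|}\leq\frac{n^k\rho_0^k}{t\,|L_t|}.
\]
The target inequality $\rho(\Gamma,L_t)\leq |L_t|^{-\epsilon}$ is therefore equivalent to $t\,|L_t|^{1-\epsilon}\geq n^{k(1-\epsilon_0)}$.

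The crux is a rearrangement argument. List the values of $a_k$ in non-increasing order $v_1\geq v_2\geq\cdots$ (zero-padded); taking $t=v_{i^*}$ gives $|L_{v_{i^*}}|\geq i^*$, and the task reduces to exhibiting, for $k$ large, an index $i^*\to\infty$ with $v_{i^*}(i^*)^{1-\epsilon}\geq n^{k(1-\epsilon_0)}$. Assume for contradiction that no such $i^*\geq I:=\lceil n^{k\epsilon_0}/2\rceil$ exists; then $v_i<n^{k(1-\epsilon_0)}\,i^{\epsilon-1}$ for $i\geq I$. Using the trivial bound $v_i\leq n^{k(1-\epsilon_0)}$ for $i<I$ and the estimate $\sum_{i=1}^{|\Sigma^k|}i^{\epsilon-1}\leq n^{k\epsilon}/\epsilon$ (recall $|\Sigma^k|\leq n^k$),
\[
n^k=\sum_i v_i\;\leq\; I\cdot n^{k(1-\epsilon_0)}+n^{k(1-\epsilon_0)}\cdot n^{k\epsilon}/\epsilon.
\]
Dividing by $n^{k(1-\epsilon_0)}$ gives $n^{k\epsilon_0}\leq I+n^{k\epsilon}/\epsilon$. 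Since $\epsilon<\epsilon_0$ the second term is $o(n^{k\epsilon_0})$, contradicting $I=\lceil n^{k\epsilon_0}/2\rceil$ for $k$ large. Hence $i^*\geq I\to\infty$ exists, and $S_k:=L_{v_{i^*}}$ finishes the construction: it is symmetric, has cardinality at least $i^*\to\infty$, and satisfies $\rho(\Gamma,S_k)\leq|S_k|^{-\epsilon}$ via the displayed chain above and $|S_k|\geq i^*$.

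I expect the main obstacle to be the rearrangement step: it crucially exploits the strictness $\epsilon<\epsilon_0$ to make the tail contribution $n^{k\epsilon}$ negligible against the main scale $n^{k\epsilon_0}$, and it requires the three-way interplay between $\|a_k\|_1$, $\|a_k\|_{2\to 2}$, and $\|a_k\|_\infty$ to enter at the correct points (the $\ell^1$-bound controls the budget, the $\ell^\infty$-bound controls the head, the operator-norm bound controls how low the level $t$ must be set). A minor subtlety is the verification that the operator norm is monotone under the pointwise order on non-negative convolutors, which follows from $|f*\xi|\leq f*|\xi|$ and the triangle inequality.
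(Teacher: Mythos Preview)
Your argument is correct and is essentially the same as the one underlying Corollary~6 of~\cite{Thom:rad}, which the paper simply cites: one takes the convolution powers $a_k=\one_\Sigma^{*k}$, uses the three norms $\|a_k\|_1=n^k$, $\|a_k\|_\infty\leq n^k\rho_0^k$, $\|a_k\|_{2\to2}=n^k\rho_0^k$, and extracts a super-level set of the right size by a layer-cake/pigeonhole argument exploiting $\epsilon<\epsilon_0$. The paper's contribution here is only the observation that the original proof, stated for $\epsilon=\epsilon_0/2$, in fact works for every $\epsilon<\epsilon_0$; you have spelled this out explicitly and correctly.
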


\begin{proof}
Note that the statement is empty unless $\Sigma$ generates a non-amenable group. The proof of Corollary~6 in~\cite{Thom:rad} gives a sequence $(S_k)$ such that the desired inequality holds for all large $k$ when $\epsilon = \frac 12 (-\frac{\ln \rho(\Gamma,\Sigma)}{\ln|\Sigma|})$. But this proof never used more than  $\epsilon< -\frac{\ln \rho(\Gamma,\Sigma)}{\ln|\Sigma|}$; varying $\epsilon$ will only change the first index $k$ beyond which the estimate holds, and hence change how to truncate the sequence if we want the estimate for \emph{all} $k$.
\end{proof}

\section{Applications and open problems}
\subsection{Rapid decay property}
Let $1\leq p <\infty$. Recall that a group $\Gamma$ has the \textbf{$p$-rapid decay property RD$_p$} if there exists a length function $L$ on $\Gamma$ and a polynomial $P$ such that
\begin{equation}\label{eq:RD}
\|a\|_{p \to p} \leq P(d)\, \| a \|_p
\end{equation}
is satisfied for every $a\in {\mathbf C}[\Gamma]$ and ${d\geq 0}$ such that $a$ is supported in an $L$-ball of radius~$d$. Note that when $\Gamma$ is finitely generated, it is equivalent to require~\eqref{eq:RD} for some (or any) word-length~$L$. We refer to~\cite{liao-yu} for background and recall the following:

\begin{itemize}
\item For $p=2$, RD$_2$ is the classical rapid decay property RD introduced by Jolissaint~\cite{joli}.
\item RD$_1$ always holds and RD$_p$ is equivalent to polynomial growth if $p>2$.
\item RD$_p$ implies RD$_q$ if $p>q$.
\end{itemize}

In other words, as $p$ decreases from~$2$ to~$1$, property~RD$_p$ is weakening of~RD until no restriction is left.

\begin{prop}\label{prop:RD}
Let $\Gamma$ be a non-amenable group.

If $\Gamma$ has property RD$_p$ for $p \in [1,2]$, then $r(\Gamma)\geq 1-1/p$.

In particular, if $\Gamma$ has property RD, then $r(\Gamma)=1/2$.
 \end{prop}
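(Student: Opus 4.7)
The plan is to apply RD$_p$ to the characteristic function $\one_S$ of a symmetric finite set $S$, exploit self-adjointness to interpolate between $\ell^p$ and its conjugate $\ell^{p'}$, and derive a bound of the form $\rho(\Gamma,S)\leq P(d)\,|S|^{1/p-1}$, where $d$ is the $L$-radius of $S$. Word-balls in a finitely generated non-amenable subgroup of $\Gamma$ will then furnish arbitrarily large such $S$ for which the $P(d)$ correction is logarithmically negligible, yielding the claim.

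First, I would reduce to the case where $\Gamma$ itself is finitely generated. Since amenability is a local property, some finitely generated subgroup $\Gamma_0\leq\Gamma$ is non-amenable, and RD$_p$ restricts to $\Gamma_0$ with the same polynomial $P$ and with $L|_{\Gamma_0}$: left convolution by an element of $\CC[\Gamma_0]$ on $\ell^p(\Gamma)$ decomposes isometrically across right $\Gamma_0$-cosets, so its $p\to p$ norm equals the one computed on $\ell^p(\Gamma_0)$. By the trivial monotonicity of $r$ under subgroups (used just before Lemma~\ref{lem:rsurjection}), it suffices to prove $r(\Gamma_0)\geq 1-1/p$. Now fix a finite symmetric generating set $\Sigma$ of $\Gamma_0$, let $M:=\max_{s\in\Sigma}L(s)$, and put $S_d:=B_\Sigma(d)$. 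The set $S_d$ is symmetric, lies inside the $L$-ball of radius $Md$, and by the standard fact that non-amenable finitely generated groups have exponential growth, $\ln|S_d|\geq cd$ for some $c>0$ and all large $d$. Applying RD$_p$ together with $\|\one_{S_d}\|_p=|S_d|^{1/p}$ yields $\|\one_{S_d}\|_{p\to p}\leq P(Md)\,|S_d|^{1/p}$. Because $\one_{S_d}$ is self-adjoint as a convolution operator, $\|\one_{S_d}\|_{p\to p}=\|\one_{S_d}\|_{p'\to p'}$, and Riesz--Thorin interpolation at the midpoint gives
\[
|S_d|\,\rho(\Gamma_0,S_d)=\|\one_{S_d}\|_{2\to 2}\leq\|\one_{S_d}\|_{p\to p}\leq P(Md)\,|S_d|^{1/p}.
\]
Taking $-\ln$ and dividing by $\ln|S_d|$ makes the correction $\ln P(Md)/\ln|S_d|=O(\ln d/d)$ vanish, giving $r(\Gamma_0)\geq 1-1/p$, and hence $r(\Gamma)\geq 1-1/p$.

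The second assertion is then immediate: RD is RD$_2$, so the first part delivers $r(\Gamma)\geq 1/2$, while Proposition~\ref{prop:r}\,(3) supplies the reverse inequality. The only delicate step is the interpolation, which crucially uses symmetry of $S$ to equate $\|\one_S\|_{p\to p}$ and $\|\one_S\|_{p'\to p'}$; the remaining ingredients --- the local character of amenability, stability of RD$_p$ under passage to subgroups, and exponential growth of non-amenable finitely generated groups --- are all standard.
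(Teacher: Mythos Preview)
Your proof is correct and follows essentially the same route as the paper: bound $\|\one_S\|_{p\to p}$ via RD$_p$, use symmetry of $S$ to match the $p'\to p'$ norm, interpolate by Riesz--Thorin to control $\rho(\Gamma,S)$, and then feed in an exponentially growing sequence of sets coming from a finitely generated non-amenable subgroup so that the polynomial factor becomes negligible. The only cosmetic differences are that the paper works directly in $\Gamma$ with product sets $S_1^d$ rather than passing to $\Gamma_0$ and using word-balls $B_\Sigma(d)$, and it phrases everything in terms of the Markov operator $M_S$ instead of $\one_S$; neither distinction affects the argument.
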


In view of Corollary~\ref{chain}, Proposition~\ref{prop:RD} implies the following.

 \begin{cor}
Let $\Gamma$ be a non-amenable group with property RD$_p$ for $p\in[1,2]$.

Then $\Lit(\Gamma) \geq p$.\qed
 \end{cor}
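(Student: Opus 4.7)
The plan is to combine the previous Proposition~\ref{prop:RD} with Corollary~\ref{chain}: no new content is needed, only a short chaining of inequalities. First, I would invoke Proposition~\ref{prop:RD} with the given non-amenable $\Gamma$ and its property RD$_p$ to extract the lower bound $r(\Gamma) \geq 1 - 1/p$. Next, I would read off from Corollary~\ref{chain} the middle link of its chain, namely
$$r(\Gamma) \;\leq\; \ibet(\Gamma) \;=\; 1-\tfrac{1}{\Lit(\Gamma)}.$$
Combining the two gives $1 - 1/\Lit(\Gamma) \geq 1 - 1/p$, and since $t \mapsto 1 - 1/t$ is strictly increasing on $(0, \infty]$ (with the convention $1 - 1/\infty = 1$), this rearranges exactly to $\Lit(\Gamma) \geq p$, which is the claim.

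There is essentially no obstacle: the substantive work has already been carried out, first in establishing the identity $\ibet(\Gamma) = 1 - 1/\Lit(\Gamma)$ of Theorem~\ref{thm:Lit:ibet} (which is what lets us translate an $r$-bound into a $\Lit$-bound via Corollary~\ref{chain}), and second in Proposition~\ref{prop:RD} itself. The only sanity checks worth making explicit are that the non-amenability hypothesis is indeed needed in order to apply Proposition~\ref{prop:RD} in a non-trivial way, that the boundary case $p=1$ reproduces the already-known bound $\Lit(\Gamma) \geq 1$ for infinite groups, and that the boundary case $p=2$ (classical property RD) yields $\Lit(\Gamma) \geq 2$, matching the $r(\Gamma) = 1/2$ output of Proposition~\ref{prop:RD} under the full Cheeger relationship.
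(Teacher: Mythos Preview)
Your proposal is correct and follows exactly the paper's approach: the corollary is stated with a \qed{} symbol and introduced by the sentence ``In view of Corollary~\ref{chain}, Proposition~\ref{prop:RD} implies the following,'' so the paper gives no separate argument beyond chaining these two results, which is precisely what you do.
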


\begin{proof}[Proof of Proposition~\ref{prop:RD}]
Fix a length $L$ on $\Gamma$ satisfying the inequality~\eqref{eq:RD} and consider any symmetric finite set $S\se \Gamma$. Then the Markov operator $M_S$ on $\ell^p(\Gamma)$ has norm at most
$$\|M_S\|_{p \to p} \leq P(d)\, \| M_S \|_p =   P(d)\, |S|^{\frac{1-p}{p}},$$
where $d$ is the radius of a ball containing $S$. Let $q$ be the conjugate exponent of $p$. Since $S$ is symmetric, the adjoint on $\ell^q(\Gamma)$ of the Markov operator $M_S$ on $\ell^p(\Gamma)$ is also given by $M_S$. It follows that $\|M_S\|_{q \to q}  \leq P(d)\, |S|^{(1-p)/p}$. Since~$2$ is the harmonic mean of~$p$ and~$q$, the Riesz--Thorin theorem yields $\|M_S\|_{2 \to 2} \leq P(d)\, |S|^{(1-p)/p}$.

Since $\Gamma$ is non-amenable, it contains a finitely generated subgroup of exponential growth. That is, there is $S_1\se \Gamma$ finite symmetric and $\omega>1$ with
$$\omega^d\leq |S_1^d| \leq |S_1|^d \kern5mm\forall\, d.$$
Upon adjusting $L$ (and hence $P$) by a constant, $S_1^d$ lies in an $L$-ball of radius $d$ for each $d\in\mathbf N$. Now we have
$$\rho(\Gamma, S_1^d) = \|M_{S_1^d}\|_{2 \to 2} \leq P(d) \, |S_1|^{d \frac{1-p}p}$$
and we can conclude
$$ \frac{-\ln \rho(\Gamma, S_1^d) }{\ln |S_1^d|} \geq \frac{d\, \frac{p-1}p \ln|S_1|}{d\ln |S_1|} - \frac{\ln P(d)}{d \ln \omega} = \frac{p-1}{p} -  \frac{\ln P(d)^{\frac1d}}{\ln \omega}.$$
Since $P$ is a polynomial, the limit as $d\to\infty$ of the right hand side is $(p-1)/p=1-1/p$, as required. The additional statement follows since we always have $r(\Gamma)\leq 1/2$.
\end{proof}

\subsection{Burnside groups}
We first recall the definition of the \textbf{cogrowth} $\alpha$ associated to a finitely generated group $\Gamma$ endowed with a choice of~$m$ generators. This choice corresponds to an epimorphism $\pi\colon F_m\to\Gamma$ from the free group $F_m$ of rank $m$, which we endow with the word-length $L$ associated to the chosen basis. Then the definition is
$$\alpha=\limsup_{k \to \infty} \big|\{w\in \ker \pi : L(w)=k \}\big|^\frac1k.$$
We assume that $\pi$ is not the identity; one then has
\begin{equation}\label{eq:alpha}
\sqrt{2m -1} \leq \alpha \leq 2m -1.
\end{equation}
The first inequality is proved in Statement~3.1 of~\cite{grigorchuk} and the second holds by definition. Let now $S\se \Gamma$ be the set consisting of the chosen generators together with their inverses. Then Theorem~4.1 in~\cite{grigorchuk} states
$$\rho(\Gamma,S)= \frac{\sqrt{2m-1}}{2m}\left(\frac{\sqrt{2m-1}}{\alpha}+\frac{\alpha}{\sqrt{2m-1}}\right),$$
which, with the lower bound of~\eqref{eq:alpha} for $\alpha$, implies
\begin{equation}\label{eq:cogrowth}
\rho(\Gamma,S) \leq \alpha /m.
\end{equation}
We now turn to the free Burnside group $B(m, a)$ of exponent $a$ on $m$ generators. As explained in~\cite[\S60]{TGH} (see also Remark~\ref{remark:better_burnside_bounds} below), Adyan proved in~\cite{Adyan} that there is $\delta<2/3$ such that the estimate
$$\alpha \leq (2m-1)^{\delta}$$
holds for $m\geq 2$ and $a\geq 665$ odd. Therefore, using~\eqref{eq:cogrowth}, we have for the corresponding symmetric set $S$ the estimate
\begin{equation}\label{eq:Burn}
\rho(B(m, a), S) \leq (2m-1)^{-\frac13}
\end{equation}
for all $m$ large enough. We are now ready to deduce the following.

\begin{theorem}\label{thm:Burn}
Let $B(m,a)$ be the free Burnside group of exponent $a$ on $m$ generators, where $m \geq 2$ and $a \geq 665$ odd.

Then $r(B(m,a))\geq 1/3$ and hence $\Lit(B(m,a)) \geq 3/2$.
\end{theorem}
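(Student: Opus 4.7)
The plan is to exhibit, within the fixed group $B(m,a)$, a sequence of finite symmetric subsets whose sizes tend to infinity and for which the ratio $-\ln\rho/\ln|S|$ tends to $1/3$; the conclusion $r(B(m,a))\geq 1/3$ will then follow from Theorem~\ref{rinf}, and $\Lit(B(m,a))\geq 3/2$ from Corollary~\ref{chain}.

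The source of such subsets is Adyan's theorem on the subgroup structure of Burnside groups (see e.g.~\cite{TGH}): for $m\geq 2$ and $a\geq 665$ odd, $B(m,a)$ contains an isomorphic copy of $B(m',a)$ for every $m'\geq 2$. Given such an embedding $\iota\colon B(m',a)\hookrightarrow B(m,a)$, write $S_{m'}$ for the standard symmetric generating set of $B(m',a)$ and let $S:=\iota(S_{m'})$, a symmetric subset of $B(m,a)$ of cardinality $2m'$. The decomposition of $\ell^2(B(m,a))$ as a direct sum of its restrictions to the right cosets of $\iota(B(m',a))$ is preserved by the left-convolution operator $M_S$ (because $S\subseteq \iota(B(m',a))$), and each summand is unitarily equivalent, as a module over $\iota(B(m',a))$, to $\ell^2(B(m',a))$ carrying the operator $M_{S_{m'}}$. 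Consequently $\rho(B(m,a),S)=\rho(B(m',a),S_{m'})$.

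Combining this identity with the estimate~\eqref{eq:Burn}, namely $\rho(B(m',a),S_{m'})\leq (2m'-1)^{-1/3}$ for all sufficiently large $m'$, one obtains
$$\frac{-\ln\rho(B(m,a),S)}{\ln|S|}\;\geq\;\frac{\ln(2m'-1)}{3\ln(2m')}\;\xrightarrow[m'\to\infty]{}\;\frac{1}{3}.$$
Since $m'$ is arbitrary, Theorem~\ref{rinf} (read in the equivalent form $r(\Gamma)=\sup_S(-\ln\rho(\Gamma,S)/\ln|S|)$) delivers $r(B(m,a))\geq 1/3$. Finally, Corollary~\ref{chain} gives $1-1/\Lit(B(m,a))=\ibet(B(m,a))\geq r(B(m,a))\geq 1/3$, so $\Lit(B(m,a))\geq 3/2$.

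The principal obstacle is the invocation of Adyan's subgroup-containment theorem in the first step: by itself, the bound \eqref{eq:Burn} furnishes only the single distinguished symmetric set of size $2m$ inside $B(m,a)$, whose ratio is strictly below $1/3$; to let the size of our test sets grow independently of the fixed ambient rank $m$ one genuinely needs the deep combinatorial fact that free Burnside groups of every rank embed into $B(m,a)$ for every $m\geq 2$. Once that fact is granted, the remaining ingredients (the coset decomposition of $\ell^2$ and the arithmetic of the logarithmic ratio) are routine.
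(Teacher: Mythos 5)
Your proposal is correct and follows essentially the same route as the paper: use Adyan's cogrowth estimate~\eqref{eq:Burn}, embed Burnside groups of arbitrarily large rank into the ambient group, observe that $\rho(\Gamma,S)$ depends only on $\langle S\rangle$, and let the rank go to infinity. The only organizational difference is that the paper first passes to $m=2$ via Lemma~\ref{lem:rsurjection} and then embeds $B(m',a)$ into $B(2,a)$, whereas you embed $B(m',a)$ directly into $B(m,a)$ (via $B(m',a)\hookrightarrow B(2,a)\hookrightarrow B(m,a)$), which bypasses the quotient lemma; you also spell out the coset decomposition showing $\rho(B(m,a),\iota(S_{m'}))=\rho(B(m',a),S_{m'})$, which the paper leaves implicit. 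One small attribution point: the embedding of $B(m',a)$ into $B(2,a)$ for $m'>2$ is \v{S}irvanjan's theorem~\cite{Sirvajan}, not Adyan's; Adyan provides the cogrowth bound and the freeness of subgroups generated by subsets of the basis.
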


\begin{proof}
The second statement follows from the first by Corollary~\ref{chain}. For the first statement, it is sufficient to establish the case $m=2$. Indeed, the universal property implies that $B(2,a)$ is a quotient of $B(m,a)$ and therefore we can apply Lemma~\ref{lem:rsurjection} to reduce ourselves to $B(2,a)$.

We shall nonetheless use the groups $B(m,a)$, as follows. It was proved by \v{S}irvanjan~\cite{Sirvajan} that $B(m,a)$ embeds into $B(2,a)$ (still under the assumption $a \geq 665$ odd). Therefore, we obtain a sequence $(S_m)$ of symmetric sets $S_m\se B(2,a)$ for which the estimate~\eqref{eq:Burn} implies
$$\rho(B(2, a), S_m) \leq (2m-1)^{-\frac13}.$$
We deduce
$$r(B(2, a)) \geq \limsup_{m\to\infty} \frac{-\ln \rho(B(2, a), S_m) }{\ln |S_m|}  \geq \frac13 \limsup_{m\to\infty}\frac{\ln (2m-1)}{\ln 2m} = \frac13,$$
as desired.
\end{proof}

\begin{note} \label{remark:better_burnside_bounds}
Given $m\geq 2$, Adyan~\cite{Adyan} provides upper bounds for $\alpha$ that converge to $(2m-1)^{1/2+2/q}$ as the odd exponent $a$ goes to infinity. More precisely, these estimates follow from Theorem~3 in~\cite{Adyan} (there is a misprint in this translation, the exact formula is in the corresponding theorem in the original~\cite{Adyan_r}). In the notation of that formula, we have $\gamma_R\to 1$ and $\delta_R\to\infty$ as $a\to\infty$ (odd), and the claim follows.

Here $q=90$ is a fixed parameter from~\cite{Adyan-book}. It is mentioned in~\cite{Adyan-book}*{VI.2.16 (page 254)} that the value of $q$ can be increased at the cost of increasing also the exponent $a$. Hence, increasing the parameter $q$ and $a$ accordingly, the proof of Theorem~\ref{thm:Burn} would show that there are free Burnside groups $B(m,a)$ with $r(B(m,a))$ bounded below by values arbitrarily close to~$1/2$ and, consequently, $\Lit(B(m,a))$ bounded below by values arbitrarily close to~$2$. 
\end{note}

It was proven by Ozawa and one of the authors~\cite{MonodOzawa} that the groups $\Gamma=B(m,na)$ are non-unitarisable for $m,n \geq 2$, $a \geq 665$, $n, a$ odd. This may seem to indicate that $\Lit(\Gamma) \geq 2$. In view of the loss of a factor of two in the comparison between $\ibet(\Gamma)$ and $r(\Gamma)$, one might even speculate that $\Lit(\Gamma) \geq 3$.

\subsection{Colourings}
We define the \textbf{maximal average degree} of a graph $G$ by
$${\rm mad}(G)=\sup_{F}\frac{2|E(F)| - |L(F)|}{|F|}, $$
where $F$ runs through all finite subsets of vertices. In the case of Cayley graph ${\rm Cay}(\Gamma,S)$, we have
\begin{equation}\label{eq:mad}
{\rm mad}({\rm Cay}(\Gamma,S))= |S| \,e(\Gamma, S)
\end{equation}
by Lemma~\ref{lem:count:E}. On the other hand, there is a well-known relation between ${\rm mad}(G)$ and colourings:

\begin{prop}\label{prop:mad}
Let $G$ be any locally finite graph without loops. If ${\rm mad}(G)\leq k$, then $G$ is $(k+1)$-colourable.
\end{prop}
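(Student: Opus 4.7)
The plan is to combine a standard degeneracy argument with a compactness argument. Since $G$ has no loops, the condition ${\rm mad}(G)\leq k$ unpacks, for every non-empty finite $F\subseteq V(G)$, as
\[
\frac{2|E(F)|}{|F|}\leq k,
\]
i.e.\ every finite induced subgraph has average degree at most $k$. In particular, no such subgraph can have all vertex-degrees strictly greater than $\lfloor k\rfloor$, since that would force the average degree to be at least $\lfloor k\rfloor+1>k$. Thus every non-empty finite subgraph $H\subseteq G$ contains a vertex whose degree in $H$ is at most $\lfloor k\rfloor$.

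Second, I would prove by induction on $|V(H)|$ that every finite subgraph $H$ of $G$ is $(\lfloor k\rfloor+1)$-colourable in the ordinary sense. For the inductive step, pick a vertex $v\in V(H)$ of degree at most $\lfloor k\rfloor$ in $H$ (which exists by the previous paragraph), colour $H\setminus\{v\}$ using $\lfloor k\rfloor+1$ colours by induction, and then extend the colouring to $v$: among the $\lfloor k\rfloor+1$ available colours, the at most $\lfloor k\rfloor$ neighbours of $v$ exclude at most $\lfloor k\rfloor$ of them, so some colour remains.

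Third, I would pass from finite subgraphs to $G$ itself via the de Bruijn--Erd\H{o}s theorem, which asserts that a graph is $n$-colourable whenever all its finite subgraphs are; for locally finite $G$, this is a direct consequence of Tychonoff's theorem applied to the compact space $\{1,\dots,\lfloor k\rfloor+1\}^{V(G)}$ intersected with the closed set of proper colourings. Since $\lfloor k+1\rfloor=\lfloor k\rfloor+1$, the resulting $(\lfloor k\rfloor+1)$-colouring of $G$ is a $(k+1)$-colouring in the convention of the paper.

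There is no real obstacle here; the graph-theoretic content is classical, and the only point to watch is the bookkeeping of the floor convention and the fact that the "no loops" hypothesis is exactly what is needed to identify $2|E(F)|/|F|$ with the ordinary average degree and to make "one-fewer colour than the number of neighbours" work at the inductive step.
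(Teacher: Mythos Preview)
Your proposal is correct and follows essentially the same approach as the paper: a compactness reduction to finite graphs combined with the degeneracy/greedy induction (find a vertex of degree at most $\lfloor k\rfloor$, remove it, colour the rest, extend). The only cosmetic difference is that the paper states the compactness step first and then runs the induction, whereas you run the induction on finite subgraphs and then invoke de~Bruijn--Erd\H{o}s; you are also slightly more explicit about the floor bookkeeping.
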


\begin{proof}
By a compactness argument, it is enough to prove the claim for finite graphs. Suppose by induction that we have proved the statement for any graph with $n$ vertices and let $G$ be a graph with $n+1$ vertices. Since ${\rm mad}(G)\leq k$, we can find a vertex $v$ of degree~$\leq k$. Now, the graph $G \setminus \{v\}$ is $(k+1)$-colourable by induction and there is one colour left for $v$.
\end{proof}

We can now deduce the

\begin{proof}[Proof of Corollary~\ref{colourcor}]
In view of Theorem~\ref{thm:Lit:ibet}, it is equivalent to prove that for every $\eta'<\ibet(\Gamma)$ there are arbitrarily large finite symmetric sets $S$ for which ${\rm Cay}(\Gamma,S)$ is $|S|^{1-\eta'}$-colourable.  Choose $\eta'<\eta''<\ibet(\Gamma)$. By definition of $\ibet(\Gamma)$, there are arbitrarily large $S$ with $e(\Gamma, S) < |S|^{-\eta''}$. Since $|S|$ goes to infinity, there is no loss of generality in possibly removing the identity from $S$ to avoid loops. By~\eqref{eq:mad}, we have ${\rm mad}({\rm Cay}(\Gamma,S)) < |S|^{1-\eta''}$ and hence Proposition~\ref{prop:mad} implies that ${\rm Cay}(\Gamma,S)$ is $(|S|^{1-\eta''}+1)$-colourable. Since $|S|$ goes to infinity and $\eta'<\eta''$, we can assume that $|S|^{1-\eta''}+1$ is less than $|S|^{1-\eta'}-1$ and the proof is complete.
\end{proof}

\subsection{Open problems}
It would be very desirable to find more examples of groups $\Gamma$ with $1<\Lit(\Gamma)<\infty$. Perhaps this would give some insight into the following:

\begin{question}\label{q:range}
Is there any restriction on the value of the Littlewood exponent?
\end{question}

Of particular interest is the following specification of Question~\ref{q:range}.

\begin{question}\label{q:forbid}
Is the interval $(1,2]$ a forbidden range for the Littlewood exponent?
\end{question}

Recall that unitarisable groups satisfy $\Lit(\Gamma)\leq 2$. Therefore, in view of Theorem~\ref{beginning}, Dixmier's problem is equivalent to the conjunction of Question~\ref{q:forbid} with the following.

\begin{question}\label{q:leq2}
Does the condition $\Lit(\Gamma)\leq 2$, or equivalently $\ibet(\Gamma) \leq 1/2$, characterise unitarisability?
\end{question}

In other words, Dixmier's problem is decomposed into two apparently independent open problems: Question~\ref{q:forbid} and Question~\ref{q:leq2}.

There is however a form of overlap between these two questions, as can be shown using the main result of~\cite{MonodOzawa}, as follows.

\begin{cor}\label{cor:wreath}
If $\Lit(\Gamma)\leq 2$ characterises unitarisability, then  $(1,4/3]$ is a forbidden range for the Littlewood exponent.
\end{cor}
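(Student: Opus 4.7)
The plan is a proof by contradiction. Assume the hypothesis that $\Lit(\Gamma)\leq 2$ characterises unitarisability, and suppose that some group $\Gamma$ satisfies $\Lit(\Gamma) \in (1, 4/3]$. By Theorem~\ref{beginning}, $\Gamma$ is non-amenable, so the main result of~\cite{MonodOzawa} applies to produce a non-unitarisable wreath product of the form $\Lambda = A \wr \Gamma$, where $A$ is a non-trivial amenable (indeed finite) group. The natural projection $\Lambda \twoheadrightarrow \Gamma$ has kernel $A^{(\Gamma)}$, which is amenable, so Proposition~\ref{rsurjection:eq} yields $r(\Lambda) = r(\Gamma)$.

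The key step is the bound $\Lit(\Lambda) \leq 2$. It follows by chaining both halves of Corollary~\ref{chain} (applied to each of $\Gamma$ and $\Lambda$, which are infinite) with the preceding identity for $r$:
\begin{equation*}
\ibet(\Lambda) \;\leq\; 2\, r(\Lambda) \;=\; 2\, r(\Gamma) \;\leq\; 2\, \ibet(\Gamma) \;=\; 2\Big(1 - \tfrac{1}{\Lit(\Gamma)}\Big) \;\leq\; \tfrac{1}{2},
\end{equation*}
where the last inequality uses $\Lit(\Gamma) \leq 4/3$. Theorem~\ref{thm:Lit:ibet} translates this to $\Lit(\Lambda) \leq 2$, and the hypothesis then forces $\Lambda$ to be unitarisable. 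This contradicts the conclusion of~\cite{MonodOzawa}, so no such $\Gamma$ exists and $(1, 4/3]$ is a forbidden range.

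The main obstacle is to isolate the appropriate form of the main result of~\cite{MonodOzawa}, namely the existence of a non-unitarisable wreath product $A \wr \Gamma$ with $A$ amenable for any non-amenable $\Gamma$, so that the amenable-kernel hypothesis of Proposition~\ref{rsurjection:eq} is met. Granting that, the remaining computation is a short and essentially automatic chain of inequalities among the invariants $\Lit$, $\ibet$ and $r$ developed earlier in the paper; in particular, the numerical threshold $4/3$ is precisely what ensures that the doubling $\ibet(\Lambda) \leq 2 \ibet(\Gamma)$ lands on the critical value $\ibet(\Lambda) \leq 1/2$.
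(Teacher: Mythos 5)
Your proof is correct and takes essentially the same route as the paper, merely recast as a proof by contradiction: the paper argues directly that $\ibet(\Gamma)\leq 1/4$ forces amenability of $\Gamma$, via the identical chain $r(\Gamma)\leq\ibet(\Gamma)\leq 1/4$, then $r(\mathbf{Z}\wr\Gamma)=r(\Gamma)$ by Proposition~\ref{rsurjection:eq}, then $\ibet(\mathbf{Z}\wr\Gamma)\leq 2r(\mathbf{Z}\wr\Gamma)\leq 1/2$, followed by the criterion of~\cite{MonodOzawa}. One small correction: the wreath product used in~\cite{MonodOzawa} and in the paper's proof is $\mathbf{Z}\wr\Gamma$, so the lamp group is $\mathbf{Z}$ (amenable but infinite) rather than finite as your parenthetical suggests; this has no effect on your argument, since only amenability of the kernel $\bigoplus_\Gamma\mathbf{Z}$ is needed to invoke Proposition~\ref{rsurjection:eq}.
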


\begin{proof}
In view of Theorem~\ref{thm:Lit:ibet}, we need to prove that $\ibet(\Gamma)\leq 1/4$ implies that $\Gamma$ is amenable. By Corollary~\ref{chain}, we have $r(\Gamma)\leq 1/4$. Proposition~\ref{rsurjection:eq} implies that the wreath product $\mathbf Z \wr \Gamma$ also satisfies $r(\mathbf Z \wr \Gamma)\leq 1/4$. Appealing again to Corollary~\ref{chain}, we have $\ibet(\mathbf Z \wr \Gamma)\leq 1/ 2$. Our assumption now implies that $\mathbf Z \wr \Gamma$ is unitarisable. It was proved in~\cite{MonodOzawa} that this implies that $\Gamma$ is amenable.
\end{proof}

\begin{question}
Let $B(m,a)$ be the free Burnside group of exponent $a$ on $m$ generators, where $m \geq 2$ and $a \geq 665$ odd. Can one give more precise bounds for $\Lit(B(m,a))$? Do we have $\Lit(B(m,a))=\infty$?
\end{question}

It is unknown whether unitarisability is preserved under direct products of groups. It is easily seen to be preserved under extensions with amenable quotients, but unlikely to be preserved under extensions with amenable kernels due to the main result of~\cite{MonodOzawa} cited above.

\begin{question}
How does the Littlewood exponent behave with respect to direct products of groups? How about extensions with amenable quotients or amenable kernels?
\end{question}

If $\Lit$, or equivalently $\ibet$, were preserved under extensions with amenable kernels in analogy to Proposition~\ref{rsurjection:eq}, then the same argument as in Corollary~\ref{cor:wreath} above would show that Question~\ref{q:leq2} implies Question~\ref{q:forbid} and hence becomes equivalent to Dixmier's problem.

\smallskip

Finally, we recall that significant progress on the Dixmier problem was obtained by Pisier~\cite{Pis98}, who introduced an exponent measuring the cost of unitarising a given representation. Using the theory of operator spaces, Pisier proved that his exponent takes only half-integer values and that the lowest value characterises amenability.

\begin{question}
Is there a relation between Pisier's exponent and the Littlewood exponent?
\end{question}

\section*{Acknowledgements}
We are grateful to R\'emi Coulon for suggesting that better bounds on $\Lit(B(m,a))$ should be available when $a$ goes to infinity, and we thank Denis Osin for two helpful comments on the first version.

This research was supported in part by the ERC Consolidator Grant No.\ 681207. The results presented in this paper are part of the PhD project of the first author.


\begin{bibdiv}
\begin{biblist}

\bib{Adyan_r}{article}{
   author={Adyan, Sergei I.},
   title={Random walks on free periodic groups},
   language={Russian},
   journal={Izv. Akad. Nauk SSSR Ser. Mat.},
   volume={46},
   date={1982},
   number={6},
   pages={1139--1149},
}
\bib{Adyan}{article}{
title={Random walks on free periodic groups},
  author={Adyan, Sergei I.},
  journal={Mathematics of the USSR-Izvestiya},
  volume={21},
  number={3},
  pages={425--434},
  year={1983},
  publisher={IOP Publishing}
}

\bib{Adyan-book}{book}{
   author={Adyan, Sergei I.},
   title={The Burnside problem and identities in groups},
   series={Ergebnisse der Mathematik und ihrer Grenzgebiete},
   volume={95},
   note={Translated from Russian by John Lennox and James Wiegold},
   publisher={Springer-Verlag, Berlin-New York},
   date={1979},
}

\bib{arshdel}{article}{
  title={Examples of random groups},
  author={Goulnara Arzhantseva},
  author={Thomas Delzant},
  status={preprint},
  year={2008},
}

\bib{BozejkoFendler}{article}{
title={Herz-Schur multipliers and uniformly bounded representations of discrete groups},
  author={Marek Bo{\.z}ejko},
  author={Gero Fendler},
  journal={Archiv der Mathematik},
  volume={57},
  number={3},
  pages={290--298},
  year={1991},
  publisher={Springer}
}

\bib{TGH}{article}{
title={Amenability and paradoxical decompositions for pseudogroups and for discrete metric spaces},
  author={Tullio Ceccherini-Silberstein},
  author={Rostislav I. Grigorchuk},
  author={Pierre de la Harpe},
  journal={Proc. Steklov Inst. Math.},
  volume={224},
  number={1},
  pages={57--97},
  year={1999},
}

\bib{Coornaert}{book}{
    AUTHOR = {Coornaert, Michel},
    AUTHOR = {Delzant, Thomas},
    AUTHOR = {Papadopoulos, Athanase},
     TITLE = {G\'eom\'etrie et th\'eorie des groupes},
    SERIES = {Lecture Notes in Mathematics},
    VOLUME = {1441},
 PUBLISHER = {Springer-Verlag, Berlin},
      YEAR = {1990},
}

\bib{Coulon}{article}{
   author={Coulon, R\'emi},
   title={On the geometry of Burnside quotients of torsion free hyperbolic
   groups},
   journal={Internat. J. Algebra Comput.},
   volume={24},
   date={2014},
   number={3},
   pages={251--345},
}

\bib{Day}{article}
{
  title={Means for the bounded functions and ergodicity of the bounded representations of semi-groups},
  author={Day, Mahlon M.},
  journal={Transactions of theAmerican Mathematical Society},
  volume={69},
  number={2},
  pages={276--291},
  year={1950},
  publisher={JSTOR}
}
\bib{Dixmier}{article}{
  title={Les moyennes invariantes dans les semi-groupes et leurs applications},
  author={Dixmier, Jacques},
  journal={Acta Sci. Math. Szeged},
  volume={12},
  number={Leopoldo Fej{\'e}r et Frederico Riesz LXX annos natis dedicatus, Pars A},
  pages={213--227},
  year={1950}
}

\bib{Ehrenpreis}{article}{
title={Uniformly bounded representations of groups},
  author={Leon Ehrenpreis and Friederich I. Mautner},
  journal={Proceedings of the National Academy of Sciences},
  volume={41},
  number={4},
  pages={231--233},
  year={1955},
  publisher={National Acad Sciences}
}

\bib{EpsteinMonod}{article}
{
title={Nonunitarizable representations and random forests},
  author={Inessa Epstein and Nicolas Monod},
  journal={International Mathematics Research Notices},
  volume={2009},
  number={22},
  pages={4336--4353},
  year={2009},
  publisher={Oxford University Press}
}

\bib{grigorchuk}{article}{
   author={Grigorchuk, Rostislav~I.},
   title={Symmetrical random walks on discrete groups},
   conference={
      title={Multicomponent random systems},
   },
   book={
      series={Adv. Probab. Related Topics},
      volume={6},
      publisher={Dekker, New York},
   },
   date={1980},
   pages={285--325},
}

\bib{MR1978492}{article}{
    AUTHOR = {Gromov, Mikhail},
     TITLE = {Random walk in random groups},
  JOURNAL = {Geometric and Functional Analysis},
    VOLUME = {13},
      YEAR = {2003},
    NUMBER = {1},
     PAGES = {73--146},
}

\bib{joli}{article}{
   author={Jolissaint, Paul},
   title={Rapidly decreasing functions in reduced $C^*$-algebras of groups},
   journal={Trans. Amer. Math. Soc.},
   volume={317},
   date={1990},
   number={1},
   pages={167--196},
}

\bib{Kesten}{article}{
   author={Kesten, Harry},
   title={Symmetric random walks on groups},
   journal={Trans. Amer. Math. Soc.},
   volume={92},
   date={1959},
   pages={336--354},
   issn={0002-9947},
}

\bib{liao-yu}{article}
{	
title={K-theory of group Banach algebras and Banach property RD},
  author={Benben Liao and Guoliang Yu},
  eprint={arXiv:1708.01982v2}
}

\bib{lubotzky}{book}{
   author={Lubotzky, Alexander},
   title={Discrete groups, expanding graphs and invariant measures},
   series={Modern Birkh\"auser Classics},
   note={With an appendix by Jonathan D. Rogawski;
   Reprint of the 1994 edition},
   publisher={Birkh\"auser Verlag, Basel},
   date={2010},
   pages={iii+192},
}

\bib{luck}{book}{
   author={L\"uck, Wolfgang},
   title={$L^2$-invariants: theory and applications to geometry and
   $K$-theory},
   series={Ergebnisse der Mathematik und ihrer Grenzgebiete.},
   volume={44},
   publisher={Springer-Verlag, Berlin},
   date={2002},
   pages={xvi+595},
}

\bib{Lyndon-Schupp}{book}{
   author={Lyndon, Roger C.},
   author={Schupp, Paul E.},
   title={Combinatorial group theory},
   note={Ergebnisse der Mathematik und ihrer Grenzgebiete, Band 89},
   publisher={Springer-Verlag, Berlin-New York},
   date={1977},
   pages={xiv+339},
   isbn={3-540-07642-5},
}

\bib{MZ}{article}{
   author={Mantero, Anna Maria},
   author={Zappa, Anna},
   title={Uniformly bounded representations and $L^{p}$-convolution
   operators on a free group},
   conference={
      title={Harmonic analysis},
      address={Cortona},
      date={1982},
   },
   book={
      series={Lecture Notes in Math.},
      volume={992},
      publisher={Springer, Berlin},
   },
   date={1983},
   pages={333--343},
}

\bib{MPSZ}{article}{
   author={Mantero, Anna Maria},
   author={Pytlik, Tadeusz},
   author={Szwarc, Ryszard},
   author={Zappa, Anna},
   title={Equivalence of two series of spherical representations of a free
   group},
   journal={Ann. Mat. Pura Appl. (4)},
   volume={165},
   date={1993},
   pages={23--28},
   issn={0003-4622},
}

\bib{Mohar}{article}{
title={Isoperimetric inequalities, growth, and the spectrum of graphs},
  author={Mohar, Bojan},
  journal={Linear Algebra and its Applications},
  volume={103},
  pages={119--131},
  year={1988},
  publisher={Elsevier}
}

\bib{MonodOzawa}{article}{
title={The Dixmier problem, lamplighters and Burnside groups},
  author={Nicolas Monod and Narutaka Ozawa},
  journal={Journal of Functional Analysis},
  volume={258},
  number={1},
  pages={255--259},
  year={2010},
  publisher={Elsevier}
}
\bib{Nakamura}{article}{
  title={Group representation and Banach limit},
  author={Masahiro Nakamura  and Zir{\^o} Takeda},
  journal={Tohoku Mathematical Journal, Second Series},
  volume={3},
  number={2},
  pages={132--135},
  year={1951},
  publisher={Mathematical Institute, Tohoku University}
}

\bib{Naor-Silberman}{article}{
    AUTHOR = {Assaf Naor and Lior Silberman},
     TITLE = {Poincar\'e inequalities, embeddings, and wild groups},
   JOURNAL = {Compos. Math.},
    VOLUME = {147},
      YEAR = {2011},
    NUMBER = {5},
     PAGES = {1546--1572},
}

\bib{Olshan}{article}{
   author={Ol\cprime shanski\u\i , Alexander Yu.},
   title={Embedding of countable periodic groups in simple $2$-generator
   periodic groups},
   language={Russian, with Ukrainian summary},
   journal={Ukrain. Mat. Zh.},
   volume={43},
   date={1991},
   number={7-8},
   pages={980--986},
   issn={0041-6053},
   translation={
      journal={Ukrainian Math. J.},
      volume={43},
      date={1991},
      number={7-8},
      pages={914--919 (1992)},
      issn={0041-5995},
   },
}

\bib{Olshanskii}{article}{
   author={Ol\cprime shanski\u\i , Alexander Yu.},
   title={On residualing homomorphisms and $G$-subgroups of hyperbolic
   groups},
   journal={Internat. J. Algebra Comput.},
   volume={3},
   date={1993},
   number={4},
   pages={365--409},
}

\bib{Osin}{article}{
  title={{$L^2$}-{B}etti numbers and non-unitarizable groups without free subgroups},
  author={Osin, Denis V.},
  journal={International Mathematics Research Notices},
  volume={2009},
  number={22},
  pages={4220--4231},
  year={2009},
  publisher={Oxford University Press}
}

\bib{Pisier2}{book}{
AUTHOR = {Pisier, Gilles},
     TITLE = {Similarity problems and completely bounded maps},
    SERIES = {Lecture Notes in Mathematics},
    VOLUME = {1618},
 PUBLISHER = {Springer-Verlag, Berlin},
      YEAR = {1996},
     PAGES = {viii+156},
      ISBN = {3-540-60322-0},
  }

\bib{Pis98}{article}{
AUTHOR = {Pisier, Gilles},
     TITLE = {The similarity degree of an operator algebra},
    JOURNAL = {Rossi\u\i skaya Akademiya Nauk. Algebra i Analis},
    VOLUME = {10},
      YEAR = {1998},
    NUMBER = {1},
     PAGES = {132--186},
      ISSN = {0234-0852},
  }

\bib{PS}{article}{
   author={Pytlik, Tadeusz},
   author={Szwarc, Ryszard},
   title={An analytic family of uniformly bounded representations of free
   groups},
   journal={Acta Math.},
   volume={157},
   date={1986},
   number={3-4},
   pages={287--309},
   issn={0001-5962},
}

\bib{Sirvajan}{article}{
 AUTHOR = {\v Sirvanjan, Valeriy~L.},
     TITLE = {Imbedding of the group {$B(\infty ,$} {$n)$} in the group
              {$B(2,$} {$n)$}},
    JOURNAL = {Izvestiya Akademii Nauk SSSR. Seriya Matematicheskaya},
    VOLUME = {40},
      YEAR = {1976},
    NUMBER = {1},
     PAGES = {190--208, 223},
      ISSN = {0373-2436},
  }

\bib{Sz-Nagy}{article}{
   author={Sz.-Nagy, B\'ela de},
   title={On uniformly bounded linear transformations in Hilbert space},
   journal={Acta Univ. Szeged. Sect. Sci. Math.},
   volume={11},
   date={1947},
   pages={152--157},
}

\bib{tcheb}{article}{
   author={Tchebichef, Pafnouti~L.},
   title={M\'emoire sur les nombres premiers},
   journal={Journal de math\'ematiques pures et appliqu\'ees,},
   volume={17},
   date={1852},
   pages={366-390},
}

\bib{Thom:rad}{article} {
    AUTHOR = {Thom, Andreas},
     TITLE = {A remark about the spectral radius},
    JOURNAL = {International Mathematics Research Notices. IMRN},
      YEAR = {2015},
    NUMBER = {10},
     PAGES = {2856--2864},
      ISSN = {1073-7928},
  }

\bib{Varopoulos}{article}{
   author={Varopoulos, Nicolas Th.},
   title={On an inequality of von Neumann and an application of the metric
   theory of tensor products to operators theory},
   journal={J. Functional Analysis},
   volume={16},
   date={1974},
   pages={83--100},
}

\bib{woess}{book}{
 title={Random walks on infinite graphs and groups},
  author={Woess, Wolfgang},
  volume={138},
  year={2000},
  publisher={Cambridge university press}
}

\bib{Wysoczanski}{article}{
    AUTHOR = {Wysocza\'nski, Janusz},
     TITLE = {Characterization of amenable groups and the {L}ittlewood
              functions on free groups},
   JOURNAL = {Colloq. Math.},
      VOLUME = {55},
      YEAR = {1988},
    NUMBER = {2},
     PAGES = {261--265},
      ISSN = {0010-1354},
    }

\end{biblist}
\end{bibdiv}

\end{document}